\documentclass[11pt, oneside]{amsart}
\usepackage{geometry}
\geometry{letterpaper}
\usepackage{graphicx}
\usepackage{amssymb,amsmath,amsfonts}
\usepackage[utf8]{inputenc}
\usepackage{mathtools,mathdots,mathabx}
\usepackage[all]{xy}

\usepackage{jakelev}

\newtheorem{condition}[theorem]{Condition}

\usepackage[bookmarks, colorlinks=true, linkcolor=blue, citecolor=blue, urlcolor=blue]{hyperref}
\usepackage[all]{hypcap}

\newcommand{\Emb}{\mathrm{Emb}}
\newcommand{\rect}{{\scalebox{.3}{\yng(3,3)}}}
\renewcommand{\phi}{\varphi}
\newcommand{\ann}{\mr{ann}}
\newcommand{\eylat}{\mb{Y}_{\pm}} 
\newcommand{\ercone}[1]{\widetilde{BS}{}^D_{#1}}
\newcommand{\emcone}[1]{BS^D_{#1}}
\newcommand{\rcone}[1]{\widetilde{BS}_{#1}}
\newcommand{\mcone}[1]{BS_{#1}}
\newcommand{\escone}[1]{ES_{#1}}


\newcommand{\shiftedpuretable}[3]{\widetilde{\beta}\left[{#2} \xleftarrow{{#1\ \ }} {#3} \right]}

\title{Foundations of Boij-S\"{o}derberg Theory for Grassmannians}
\author{Nic Ford and Jake Levinson}

\begin{document}

\begin{abstract}
Boij-S\"{o}derberg theory characterizes syzygies of graded modules and sheaves on projective space. This paper continues earlier work with S. Sam \cite{FLS16}, extending the theory to the setting of $GL_k$-equivariant modules and sheaves on Grassmannians. Algebraically, we study modules over a polynomial ring in $k n$ variables, thought of as the entries of a $k \times n$ matrix.

We give equivariant analogues of two important features of the ordinary theory: the Herzog-K\"{u}hl equations and the pairing between Betti and cohomology tables.
As a necessary step, we also extend the result of \cite{FLS16}, concerning the base case of square matrices, to cover complexes other than free resolutions.

Our statements specialize to those of ordinary Boij-S\"{o}derberg theory when $k=1$. Our proof of the equivariant pairing gives a new proof in the graded setting: it relies on finding perfect matchings on certain graphs associated to Betti tables.

Finally, we give preliminary results on $2 \times 3$ matrices, exhibiting certain classes of extremal rays on the cone of Betti tables.
\end{abstract}

\maketitle

\section{Introduction}

\subsection{Boij-S\"{o}derberg theory}
Let $R = \mb{C}[x_1, \ldots, x_n]$ be a polynomial ring and $M$ a graded, finitely-generated $R$-module. The \newword{Betti table} of $M$ is the collection of numbers
\[\beta_{ij}(M) := \#\text{ degree-$j$ generators of the $i$-th syzygy module of } M.\]
That is, by definition, the minimal free resolution of $M$ has the form
\[M \leftarrow F_0 \leftarrow F_1 \leftarrow \cdots \leftarrow F_n \leftarrow 0, \text{ with } F_i = \bigoplus_j R[-j]^{\beta_{ij}}.\]
These numbers encode much of the structure of $M$, such as its dimension and whether or not it is Cohen-Macaulay. They also describe geometrical properties of the associated sheaf on $\mb{P}(\mb{C}^n)$.

Boij-S\"{o}derberg theory was initially concerned with describing which sets of numbers $\beta_{ij}$ arise as Betti tables of modules. The key early observation was that it is easier to determine which tables arise up to scalar multiple, and as such, the goal was to characterize the \newword{Boij-S\"{o}derberg cone} $BS_n$ of positive scalar multiples of Betti tables \cite{BS2008,ES2009,EFW2011}. More recent work has focused on modules over multigraded and toric rings \cite{EE2012}, and some homogeneous coordinate rings \cite{BBEG2012,3pts,kummini-sam}, as well as more detailed homological questions \cite{NS12,BEKS13,EES13}. A good survey of the field is \cite{floystad-expository}.

In each of these cases, an important feature of the theory is a duality between Betti tables and \newword{cohomology tables} of sheaves on an associated variety. \cite{ES2009,EE2012} For a coherent sheaf $\mc{E}$ on $\mb{P}(\mb{C}^n)$, the cohomology table is the collection of numbers
\[\gamma_{ij}(\mc{E}) := \dim_\mb{C} H^i(\mc{E}(-j)),\]
giving all the sheaf cohomology of all the twists of $\mc{E}$.

For graded modules and projective spaces, the duality takes the form of a bilinear pairing of the cones of Betti and cohomology tables, and it produces a point in the simplest Boij-S\"oderberg cone $BS_1$. The inequalities defining $BS_1$ therefore pull back to nonnegative bilinear pairings between Betti and cohomology tables, and these pulled-back inequalities fully characterize the two cones.

In particular, the Boij-S\"{o}derberg cone (in the graded setting) is rational polyhedral, and its extremal rays and supporting hyperplanes are explicitly known. The extremal rays correspond to \newword{pure Betti tables}. These are the simplest possible tables, having only one nonzero entry in each column (that is, for each $i$, only one $\beta_{ij}$ is nonzero). Similarly, the supporting hyperplanes come from pairing with the cohomology of vector bundles with so-called \newword{supernatural cohomology}. Much less is known about the Boij-S\"{o}derberg cone in other settings.

\subsection{Grassmannian Boij-S\"{o}derberg theory} \label{subsec:grass-boijsoderberg}
The goal of this paper is to continue earlier work of the authors, joint with S. Sam \cite{FLS16}, on extending the theory to the setting of Grassmannians $Gr(k,\mb{C}^n)$. On the geometric side, we will be interested in the cohomology of coherent sheaves on $Gr(k,\mb{C}^n)$. On the algebraic side, we consider the polynomial ring in $kn$ variables ($k \leq n$),
\[R_{k,n} = \mb{C}\Big[ x_{ij} : \begin{aligned} &1 \leq i \leq k \\ &1 \leq j \leq n\end{aligned} \Big],\]
thought of as the entries of a $k \times n$ matrix. The group $GL_k$ acts on $R_{k,n}$, and we are interested in (finitely-generated) \newword{equivariant modules} $M$, i.e., those with a compatible $GL_k$ action.

Aside from the inherent interest of understanding sheaf cohomology and syzygies on Grassmannians, there is hope that this setting might avoid some obstacles faced in other extensions of Boij-S\"{o}derberg theory, e.g. to products of projective spaces. For example, in the `base case' of square matrices $(n=k)$, the `irrelevant ideal' is the principal ideal generated by the determinant, and the Boij-S\"{o}derberg cone has an especially elegant structure (see below, Section \ref{subsubsec:base-case}). 

We define \newword{equivariant Betti tables} $\beta(M)$ using the representation theory of $GL_k$. Let $\mb{S}_\lambda(\mb{C}^k)$ denote the irreducible $GL_k$ representation of weight $\lambda$, where $\mb{S}_\lambda$ is the Schur functor. There is a corresponding free module, namely $\mb{S}_\lambda(\mb{C}^k) \otimes_\mb{C} R_{k,n}$, and every equivariant free module is a direct sum of these. Then $\beta(M)$ is the collection of numbers
\[\beta_{i,\lambda}(M) := \# \text{ copies of } \mb{S}_\lambda(\mb{C}^k) \text{ in the generators of the $i$-th syzygy module of } M.\]
Thus, by definition, the minimal \emph{equivariant} free resolution of $M$ has the form
\[M \leftarrow F_0 \leftarrow \cdots \leftarrow F_n \leftarrow 0, \text{ with } F_i = \bigoplus_\lambda \mb{S}_\lambda(\mb{C}^k)^{\beta_{i\lambda}} \otimes R_{k,n}.\]
Next, for $\mc{E}$ a coherent sheaf on $Gr(k,\mb{C}^n)$, we will define the \newword{$\mathbf{GL}$-cohomology table} $\gamma(\mc{E})$, generalizing the usual cohomology table:
\[\gamma_{i,\lambda}(\mc{E}) := \dim H^i(\mc{E} \otimes \mb{S}_\lambda(\mc{S})),\]
where $\mc{S}$ is the tautological vector bundle on $Gr(k,\mb{C}^n)$ of rank $k$. We write $\mb{BT}_{k,n} := \bigoplus_{i,\lambda} \mb{Q}$ for the space of abstract Betti tables. Similarly, we write $\mb{CT}_{k,n} := \bigoplus_i \prod_\lambda \mb{Q}$ for the space of abstract $GL$-cohomology tables.

\begin{remark} The case $k=1$ reduces to the ordinary Boij-S\"{o}derberg theory, since an action of $GL_1$ is formally equivalent to a grading; the module $R[-j]$ is just $\mb{S}_{(j)}(\mb{C}) \otimes R$. Note also that $\mc{S} = \mc{O}(-1)$ on projective space.
\end{remark}
The initial questions of Boij-S\"{o}derberg theory concerned finite-length graded modules $M$, i.e. those annihilated by a power of the homogeneous maximal ideal, and more generally Cohen-Macaulay modules. Similarly, we restrict our focus (for now!) on the following class of modules, which specializes to finite-length modules when $k=1$:
\begin{condition}[The modules of interest] \label{cond:modules-of-interest}
We consider finitely-generated Cohen-Macaulay modules $M$ such that $\sqrt{\ann(M)} = P_k$, the ideal of maximal minors of the $k \times n$ matrix. \end{condition}
Viewing $\Spec(R_{k,n}) = \Hom(\mb{C}^k,\mb{C}^n)$ as the affine variety of $k \times n$ matrices, this means $M$ is set-theoretically supported on the locus of rank-deficient matrices. That is, the sheaf associated to $M$ on $Gr(k,\mb{C}^n)$ is zero. For this reason, we refer to $P_k$ as the \newword{irrelevant ideal} for this setting. The Cohen-Macaulayness assumption means that
\[\mathrm{pdim}(M) = \dim(R_{P_k}) = n-k+1,\]
so its minimal free resolution has length $n-k+1$.

\begin{definition}
We define the \newword{equivariant Boij-S\"{o}derberg cone} $\mcone{k,n} \subset \mb{BT}_{k,n}$ as the positive linear span of Betti tables $\beta(M)$, where $M$ satisfies the assumptions of Condition \ref{cond:modules-of-interest}. We define the \newword{Eisenbud-Schreyer cone} $\escone{k,n} \subset \mb{CT}_{k,n}$ as the positive linear span of $GL$-cohomology tables of all coherent sheaves $\mc{E}$ on $Gr(k,\mb{C}^n)$.
\end{definition}

We wish to understand the cones $\mcone{k,n}$ and $\escone{k,n}$ generated by equivariant Betti tables of and $GL$-cohomology tables.

\begin{remark}[Multiplicities and ranks]
The irreducible representations $\mb{S}_\lambda(\mb{C}^k)$ need not be one-dimensional. As such, the corresponding free modules need not have rank 1. We will write a tilde $\widetilde{\beta}$ to denote the \emph{rank} of the $\lambda$-isotypic component (rather than its \emph{multiplicity}), and likewise write $\widetilde{\mb{BT}}_{k,n}$ and $\widetilde{BS}_{k,n}$ for the spaces of rank Betti tables. 
Of course, we may switch between them by rescaling each entry $\beta_{i,\lambda}$ by $\dim(\mb{S}_\lambda(\mb{C}^k))$.
\end{remark}

\subsection{Results of this paper}

We will generalize two important results from the existing theory on graded modules: the Herzog-K\"{u}hl equations and the pairing between Betti and cohomology tables. Along the way, we also extend our existing result on equivariant modules for the square matrices.

\subsubsection{Equivariant Herzog-K\"{u}hl equations} \label{subsubsec:hk-eqns}
In the graded setting, the Herzog-K\"{u}hl equations are $n$ linear conditions satisfied by the Betti tables of finite-length modules $M$. They say, essentially, that the Hilbert polynomial of $M$ vanishes identically, i.e., each of its coefficients is zero.

We give the following equivariant analogue.
\begin{theorem} \label{thm:equi-hk}
Let $M$ be an equivariant $R_{k,n}$-module with Betti table $\beta(M)$. Assume (twisting up if necessary) that $M$ is generated in positive degree.

There is a system of $\binom{n}{k}$ linear conditions on $\beta(M)$, indexed by partitions $\mu \geq 0$ that fit inside a $k \times (n-k)$ rectangle, called the \newword{equivariant Herzog-K\"{u}hl equations}. The following are equivalent:
\begin{enumerate}
\item[(i)] $\beta(M)$ satisfies the equivariant Herzog-K\"{u}hl equations.
\item[(ii)] $M$ is annihilated by a power of the ideal $P_k$ of maximal minors.
\item[(iii)] The sheaf associated to $M$ on $Gr(k,\mb{C}^n)$ vanishes.
\end{enumerate}
In particular, the hypotheses of Condition \ref{cond:modules-of-interest} are equivalent to the equivariant Herzog-K\"{u}hl equations, together with the conditions $\beta_{i,\lambda} = 0$ for all $i > n-k+1$.
\end{theorem}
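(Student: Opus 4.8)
The plan is to reduce the whole statement to linear algebra in the Grothendieck group $K^0(Gr(k,\mb{C}^n))$. The key tool is the exact functor $\Phi$ that takes a finitely generated equivariant $R_{k,n}$-module $M$ to its associated coherent sheaf $\mc{M}$ on $Gr(k,\mb{C}^n)$: restrict the quasicoherent sheaf on $\Spec(R_{k,n})$ to the open locus $U$ of full-rank matrices, then descend along the $GL_k$-torsor $U\to Gr(k,\mb{C}^n)$. (As in the theorem we may take $M$ generated in positive degree, but this is not essential.) Two facts drive the argument: $\Phi$ is exact, because restriction to an open set and descent along a free quotient by a reductive group are both exact; and $\Phi$ sends $\mb{S}_\lambda(\mb{C}^k)\otimes R_{k,n}$ to the bundle $\mb{S}_\lambda(\mc{S})$. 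Applying $\Phi$ to a minimal equivariant free resolution of $M$ then yields
\[
[\mc{M}] \;=\; \sum_{i,\lambda}(-1)^i\,\beta_{i,\lambda}(M)\,\bigl[\mb{S}_\lambda(\mc{S})\bigr] \qquad\text{in } K^0\bigl(Gr(k,\mb{C}^n)\bigr),
\]
so $[\mc{M}]$, and hence $\chi(Gr,\,\mc{M}\otimes\mc{F})$ for any fixed coherent $\mc{F}$, is a $\mb{Q}$-linear functional of $\beta(M)$.

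I would then \emph{define} the equivariant Herzog--K\"uhl equations to be the system, one equation for each partition $\mu$ inside the $k\times(n-k)$ rectangle,
\[
\sum_{i,\lambda}(-1)^i\,\beta_{i,\lambda}\;\chi\bigl(Gr,\;\mb{S}_\lambda(\mc{S})\otimes\mb{S}_\mu(\mc{S})\bigr)\;=\;0,
\]
whose left-hand side equals $\chi(Gr,\,\mc{M}\otimes\mb{S}_\mu(\mc{S}))$ by the identity above. There are exactly $\binom{n}{k}$ of them, and the coefficients are explicit via the Littlewood--Richardson rule and Borel--Weil--Bott (not needed below). For $k=1$ this is the classical Herzog--K\"uhl system, namely the vanishing of the Hilbert polynomial of $\mc{M}$ at $n$ distinct twists.

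For the equivalence (i) $\Leftrightarrow$ (iii): by Kapranov's theorem the classes $\{[\mb{S}_\mu(\mc{S})]:\mu\subseteq k\times(n-k)\}$ are a $\mb{Q}$-basis of $K^0(Gr(k,\mb{C}^n))_{\mb{Q}}$, and since $Gr(k,\mb{C}^n)$ is smooth and projective the Euler pairing $([\mc{E}],[\mc{F}])\mapsto\chi(Gr,\,\mc{E}\otimes\mc{F})$ is nondegenerate on $K^0_{\mb{Q}}$. Hence $\beta(M)$ satisfies the equivariant Herzog--K\"uhl equations iff $\chi(Gr,\,\mc{M}\otimes x)=0$ for all $x\in K^0_{\mb{Q}}$, iff $[\mc{M}]=0$; and a nonzero coherent sheaf has nonzero class in $K^0$ (its Hilbert polynomial, with respect to an ample bundle, has positive leading coefficient). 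So (i) is equivalent to $\mc{M}=0$, which is (iii). The equivalence (ii) $\Leftrightarrow$ (iii) is a chain of definitions: descent along $U\to Gr(k,\mb{C}^n)$ is an equivalence of categories, so $\mc{M}=0$ iff $\widetilde{M}|_U=0$, iff $\operatorname{Supp}(M)\subseteq\Spec(R_{k,n})\setminus U=V(P_k)$, iff $P_k\subseteq\sqrt{\ann(M)}$, iff $M$ is annihilated by a power of $P_k$.

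For the final assertion, combine (i) $\Leftrightarrow$ (ii) with Auslander--Buchsbaum: the vanishing $\beta_{i,\lambda}=0$ for $i>n-k+1$ says $\operatorname{pdim}(M)\le n-k+1$, i.e. $\operatorname{depth}(M)\ge kn-(n-k+1)=\dim V(P_k)$; together with $\operatorname{Supp}(M)\subseteq V(P_k)$ (from the equations) and $\operatorname{depth}(M)\le\dim(M)\le\dim V(P_k)$, all of these become equalities, so $M$ is Cohen--Macaulay of dimension $\dim V(P_k)$, and, $V(P_k)$ being irreducible of that dimension, $\operatorname{Supp}(M)=V(P_k)$, i.e. $\sqrt{\ann(M)}=P_k$ (assuming $M\ne0$); the converse direction is immediate. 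I expect the only genuinely non-formal ingredients to be the two structural facts used at the outset: exactness of $\Phi$ together with $\Phi(\mb{S}_\lambda(\mb{C}^k)\otimes R_{k,n})=\mb{S}_\lambda(\mc{S})$ (which is what lets Betti tables control $[\mc{M}]$), and Kapranov's basis of $K^0(Gr(k,\mb{C}^n))$ indexed by the $\binom{n}{k}$ partitions in the box (which is what pins the number of equations and makes them simultaneously necessary and sufficient). The rest is bookkeeping with conventions, such as matching $\mc{S}$ against $\mc{S}^*$ so that the equations read precisely as the vanishing of $\chi(Gr,\,\mc{M}\otimes\mb{S}_\mu(\mc{S}))$.
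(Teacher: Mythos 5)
Your proof is correct, and at the conceptual level it follows the same K-theoretic strategy as the paper: the Betti table determines the class $[\mc{M}]\in K^0(Gr(k,\mb{C}^n))$, the Herzog--K\"uhl equations are exactly the condition $[\mc{M}]=0$, and this forces $\mc{M}=0$ because the Grassmannian is projective, giving (i) $\Leftrightarrow$ (iii), with (ii) $\Leftrightarrow$ (iii) being descent bookkeeping. Where you genuinely diverge is in how you produce and count the $\binom{n}{k}$ equations. The paper works inside the equivariant K-ring $K^{GL(V)}(\Hom(V,W))\cong\mb{Z}[t_1^\pm,\dots,t_k^\pm]^{S_k}$ via Thomason's excision sequence, and proves by hand (their Theorem \ref{thm:basis-for-I}) that the kernel of the restriction to $K(Gr(k,W))$ is spanned by the functions $s_\lambda(1-t)$ with $\lambda\not\subseteq\rect$; the equations then come out of Stanley's change-of-basis formula and are combinatorially explicit in terms of standard Young tableaux (Corollary \ref{cor:FLSHK}). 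You instead bypass that computation entirely by appealing to Kapranov's basis of $K^0(Gr)$ and the nondegeneracy of the Euler pairing, and \emph{define} the equations as the vanishing of $\chi(Gr,\mc{M}\otimes\mb{S}_\mu(\mc{S}))$. The two systems cut out the same hyperplane arrangement (the condition $[\mc{M}]=0$), but the individual equations differ: yours pair against the Kapranov basis under the Euler form, the paper's pair against the $s_\mu(1-t)$ basis, and making yours explicit would require Littlewood--Richardson plus Borel--Weil--Bott rather than the tableau formula. Your route is shorter and more conceptual but outsources the key counting step to a theorem the paper chooses to reprove combinatorially (which is what they need for the explicit equations actually used later). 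Your treatment of the ``in particular'' clause via Auslander--Buchsbaum and the depth-dimension squeeze is correct and is in fact more explicit than the paper's, which only sketches the forward implication in the introduction.
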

We state the equations in Section \ref{subsec:FLSHK}, using the combinatorics of standard Young tableaux. We also give an interpretation in terms of equivariant K-theory, namely that the class of $M$ lies in the kernel of the map of K-theory rings
\[K^{GL_k}(\Spec(R_{k,n})) \to K(Gr(k,\mb{C}^n))\]
induced by restriction (to the locus of full-rank matrices) and descent.

\subsubsection{The Boij-S\"{o}derberg cone for square matrices} \label{subsubsec:base-case}

As in the ordinary theory, we expect the smallest case $n=k$ to play an important role. 
It serves as the base case of the theory and the target of the equivariant Boij-S\"{o}derberg pairing (Section \ref{subsubsec:equi-pairing}). Rank tables $\widetilde{\beta}$ turn out to be more significant here, so we will state results in terms of the cone $\rcone{k,k}$.

For square matrices, the modules of interest are Cohen-Macaulay and have $\sqrt{\mr{ann}(M)} = (\det)$, so they have projective dimension 1. The cone $\rcone{k,k}$ is fully understood:
\begin{theorem}[{\cite[Theorem 1.2]{FLS16}}]
The cone $\rcone{k,k}$ is rational polyhedral. Its supporting hyperplanes are indexed by {\bf order ideals} in the extended Young's lattice $\mb{Y}_{\pm}$ of $GL_k$-representations. Its extremal rays are indexed by {\bf comparable pairs} $\lambda\subsetneq \mu$ from $\mb{Y}_{\pm}$. These rays correspond to \newword{pure tables} with $\widetilde{\beta_{0,\lambda}} = \widetilde{\beta_{1,\mu}} = 1$ and all other entries zero. Up to scaling, these tables come from free resolutions of the form
\[M \leftarrow \mb{S}_{\lambda}(\mb{C}^k)^{\oplus c_0} \otimes R \leftarrow \mb{S}_{\mu}(\mb{C}^k)^{\oplus c_1} \otimes R \leftarrow 0,\]
with all generators in type $\lambda$ and all syzygies in type $\mu$.
\end{theorem}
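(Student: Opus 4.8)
The plan is to identify $\rcone{k,k}$ explicitly with the cone generated by the pure tables, and then read off the extremal rays and supporting hyperplanes from that description. For a comparable pair $\lambda\subsetneq\mu$ in $\eylat$, let $\pi_{\lambda,\mu}$ denote the rank table with $\widetilde\beta_{0,\lambda}=\widetilde\beta_{1,\mu}=1$ and all other entries $0$. I want to prove
\[
\rcone{k,k}\;=\;\big\{\,\widetilde\beta\ge 0 : \widetilde\beta\text{ satisfies the equivariant Herzog--K\"uhl equation and the cut inequalities below}\,\big\}\;=\;\operatorname{cone}\{\pi_{\lambda,\mu}:\lambda\subsetneq\mu\}.
\]
A module $M$ as in the statement has projective dimension $1$, so its minimal equivariant resolution is $0\to F_1\xrightarrow{d}F_0\to M\to 0$ with $d$ injective and $M$ torsion. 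Writing $F_0=\bigoplus_\lambda \mb{S}_\lambda(\mb{C}^k)\otimes Z_\lambda\otimes R$ and $F_1=\bigoplus_\mu \mb{S}_\mu(\mb{C}^k)\otimes U_\mu\otimes R$ with multiplicity spaces $Z_\lambda,U_\mu$, the Cauchy decomposition $R\cong\bigoplus_\nu \mb{S}_\nu(\mb{C}^k)\otimes\mb{S}_\nu(\mb{C}^k)$ together with the Littlewood--Richardson rule forces the component $d_{\mu\lambda}$ of $d$ to vanish unless $\lambda\subseteq\mu$ (as weights in $\eylat$), and to vanish when $\lambda=\mu$ by minimality: so $d$ is ``strictly lower triangular'' for the dominance order. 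This triangularity is the structural fact underlying everything.

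\emph{One inclusion.} I would first show $\beta(M)\in\operatorname{cone}\{\pi_{\lambda,\mu}\}$ for every such $M$. Fix an order filter $J\subseteq\eylat$ and set $N(J)=\{\mu:\mu\supsetneq\lambda\text{ for some }\lambda\in J\}$. Triangularity forces the composite $F_1\xrightarrow{d}F_0\twoheadrightarrow\bigoplus_{\lambda\in J}\mb{S}_\lambda(\mb{C}^k)\otimes Z_\lambda\otimes R$ to kill the summand of $F_1$ supported on weights outside $N(J)$, hence to factor through $\bigoplus_{\mu\in N(J)}\mb{S}_\mu(\mb{C}^k)\otimes U_\mu\otimes R$; since $M$ is torsion this composite is surjective after inverting $\det$, so comparing $R$-ranks yields $\sum_{\lambda\in J}\widetilde\beta_{0,\lambda}(M)\le\sum_{\mu\in N(J)}\widetilde\beta_{1,\mu}(M)$ (for $J=\eylat$ this is one half of the equivariant Herzog--K\"uhl equation $\sum_\lambda\widetilde\beta_{0,\lambda}=\sum_\mu\widetilde\beta_{1,\mu}$, the other half being $\operatorname{rank}M=0$). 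Now these are precisely the feasibility conditions, via max-flow/min-cut (the Gale supply--demand theorem), for writing a rank table $\widetilde\beta\ge 0$ as a nonnegative combination $\sum_{\lambda\subsetneq\mu}t_{\lambda\mu}\,\pi_{\lambda,\mu}$: view $\widetilde\beta_{0,\bullet}$ as supplies and $\widetilde\beta_{1,\bullet}$ as demands on the bipartite graph with an edge $\lambda\to\mu$ whenever $\lambda\subsetneq\mu$, and observe that a minimum cut may always be taken along an order filter. Hence $\rcone{k,k}\subseteq\operatorname{cone}\{\pi_{\lambda,\mu}\}$.

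\emph{The other inclusion.} For this I must realize each $\pi_{\lambda,\mu}$ (up to scaling) by a module. Twisting by a power of $\det$, assume $\lambda\subsetneq\mu$ are partitions. The space of equivariant $R$-linear maps $\mb{S}_\mu(\mb{C}^k)\otimes R\to\mb{S}_\lambda(\mb{C}^k)\otimes R$ is canonically the skew Schur functor $\mb{S}_{\mu/\lambda}(\mb{C}^k)$, concentrated in degree $|\mu|-|\lambda|\ge 1$, and it is nonzero exactly because $\lambda\subsetneq\mu$. Choosing $c_0,c_1$ with $c_0\dim\mb{S}_\lambda(\mb{C}^k)=c_1\dim\mb{S}_\mu(\mb{C}^k)$ and a generic equivariant map $d\colon\mb{S}_\mu(\mb{C}^k)^{c_1}\otimes R\to\mb{S}_\lambda(\mb{C}^k)^{c_0}\otimes R$, the source and target have equal $R$-rank and $d$ is automatically minimal, so $M_{\lambda,\mu}:=\operatorname{coker}(d)$ has the desired $2$-term resolution, and is a module of interest, provided $\det d\ne 0$ in $R$. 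I would verify this by specializing the universal matrix to the identity, reducing to the existence of a $GL_k$-equivariant isomorphism $\mb{S}_\mu(\mb{C}^k)\otimes\mb{S}_\lambda(\mb{C}^k)\to\mb{S}_\lambda(\mb{C}^k)\otimes\mb{S}_\mu(\mb{C}^k)$ --- equivalently by building $d$ as a composite of ``single box'' elementary maps along a saturated chain in Young's lattice, the base case being the resolution $0\to\mb{C}^k\otimes R\xrightarrow{X}\mb{C}^k\otimes R\to\operatorname{coker}(X)\to 0$ of the cokernel of the generic $k\times k$ matrix $X$. (Indeed $d$ is $GL_k$-semi-invariant and $R$ has no nonconstant $GL_k$-invariants, so $\det d$ is a scalar multiple of a power of $\det$, nonzero generically; hence $M_{\lambda,\mu}$ is Cohen--Macaulay with $\sqrt{\ann}=(\det)$.) Thus $\operatorname{cone}\{\pi_{\lambda,\mu}\}\subseteq\rcone{k,k}$, and the two cones coincide.

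\emph{Conclusion, and the hard part.} Granting the above, $\rcone{k,k}=\operatorname{cone}\{\pi_{\lambda,\mu}:\lambda\subsetneq\mu\}$. Each $\pi_{\lambda,\mu}$ spans an extremal ray: its support has size two, so in any decomposition $\pi_{\lambda,\mu}=v+w$ inside the cone both $v$ and $w$ are supported on $\{(0,\lambda),(1,\mu)\}$, and the Herzog--K\"uhl equation then forces them into $\mb{R}_{\ge 0}\,\pi_{\lambda,\mu}$; conversely every extremal ray is one of the generators, so the extremal rays are exactly the comparable pairs (note $\pi_{\lambda,\mu}$ with $\lambda,\mu$ incomparable is \emph{not} in the cone, since its support cannot be produced by comparable generators). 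Dually the cone is cut out by nonnegativity of entries, the equivariant Herzog--K\"uhl equation, and the cut inequalities of the second paragraph; re-indexing order filters by their complementary order ideals gives the announced bijection between supporting hyperplanes and order ideals of $\eylat$, the facets being the non-redundant ones. All the defining data is rational and locally finite (only finitely many generators and inequalities matter once the support is bounded), so the cone is rational polyhedral in the appropriate sense. I expect the two genuinely substantive points to be (i) the construction of the elementary modules $M_{\lambda,\mu}$ and the proof that a generic equivariant map is injective with torsion cokernel, and (ii) isolating which cut inequalities are facet-defining so that the polyhedral description matches the statement; the bipartite-matching reformulation of the cone is the conceptual device that makes the two inclusions line up.
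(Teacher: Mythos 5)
Your strategy matches FLS16's (which this paper cites and summarizes in Section~4.1): derive the antichain inequalities from minimality by inverting $\det$, invoke Hall's theorem / bipartite flow feasibility to identify the polytope cut out by those inequalities with the cone on pure tables, and realize each pure table by an explicit module. The first two steps are essentially complete as you have written them; your derivation of the cut inequality via the composite $F_1\to F_0\twoheadrightarrow F_0[J]$ and rank comparison over $R[1/\det]$ is the right argument, and the passage from those inequalities to a nonnegative decomposition into the $\pi_{\lambda,\mu}$ is a standard transportation-problem application of Hall's condition.

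The gap is in the realizability of pure tables --- FLS16's Theorem~4.1, which you correctly flag as the substantive point and which this paper calls ``nontrivial.'' Your semi-invariance reduction is sound: $\det d$ is a $GL(V)$-semi-invariant, hence a scalar times $\det(X)^n$, and that scalar is $\det(d|_{X=I})$. But producing even one equivariant $d$ whose specialization at $X=I$ is invertible is not addressed. The claimed reduction to ``a $GL_k$-equivariant isomorphism $\mb{S}_\mu\otimes\mb{S}_\lambda\to\mb{S}_\lambda\otimes\mb{S}_\mu$'' is not justified: the specialized map $\mb{S}_\mu(V)^{c_1}\to\mb{S}_\lambda(V)^{c_0}$ is not $GL(V)$-equivariant, since $I$ is not a $GL(V)$-fixed point of $\Hom(V,W)$, and it is not clear that evaluation-at-$I$ from the space $\Hom(\mb{C}^{c_1},\mb{C}^{c_0})\otimes\mb{S}_{\mu/\lambda}(W^*)$ of equivariant maps surjects onto the invertible ones. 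The ``composite of single-box maps'' alternative is closer to FLS16's actual construction (which extends the Pieri-map approach of \cite{EFW2011} for graded pure resolutions), but it glosses over the fact that successive single-box resolutions do not have matching multiplicities $c_0,c_1$ and are only biequivariant after tensoring with suitable $GL(W)$-representations, so they cannot be chained as written. A lesser gap: determining which antichain inequalities are actually facet-defining, so that the supporting hyperplanes biject with order ideals of $\eylat$, is not a formality and is left at ``re-indexing \ldots the non-redundant ones.''
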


We will need a slightly more general result for the purposes of the equivariant Boij-S\"{o}derberg pairing, a derived analog to $\rcone{k,n}$.

\begin{definition}
The \newword{derived Boij-S\"{o}derberg cone}, denoted $\ercone{k,n}$, is the positive linear span of (rank) Betti tables of bounded minimal complexes $F_\bullet$ of equivariant free modules, such that $F_\bullet$ is exact away from the locus of rank-deficient matrices.
\end{definition}
In this definition, we assume only that the homology modules $M$ have $\sqrt{\ann(M)} \supseteq P_k$, not that equality holds. We also do not assume Cohen-Macaulayness. Thus, $\ercone{k,k}$ includes, for example, homological shifts of elements of $\rcone{k,k}$, and Betti tables of longer complexes. The simplest tables in the derived cone are {\bf homologically shifted pure tables}, written $\shiftedpuretable{i}{\lambda}{\mu}$, for $i \in \mb{Z}$ and $\lambda \subsetneq \mu$. These are the tables with $\widetilde{\beta_{i,\lambda}} = \widetilde{\beta_{i+1,\mu}} = 1$ and all other entries zero. 

We show:
\begin{theorem}
The cone $\ercone{k,k}$ is rational polyhedral. Its extremal rays are the homological shifts of those of $\rcone{k,k}$, spanned by the tables $\shiftedpuretable{i}{\lambda}{\mu}$. The supporting hyperplanes are indexed by tuples $(\ldots, S_{-1}, S_1, S_3, \ldots)$ of convex subsets $S_i \subseteq \mb{Y}_{\pm}$, one chosen for every other spot along the complex.
\end{theorem}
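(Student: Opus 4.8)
The plan is to follow the usual Boij--S\"oderberg strategy --- produce the generating rays, prove they span, and compute the dual cone --- with the square-matrix theorem of \cite{FLS16} (the preceding result on $\rcone{k,k}$) serving as the base case, now allowing arbitrary homological shifts and arbitrarily many consecutive strands. The containment $\shiftedpuretable{i}{\lambda}{\mu}\in\ercone{k,k}$ for $\lambda\subsetneq\mu$ is immediate: take the pure resolution $0\to\mb{S}_\mu(\mb{C}^k)^{\oplus c_1}\otimes R\to\mb{S}_\lambda(\mb{C}^k)^{\oplus c_0}\otimes R\to 0$ from \cite[Theorem 1.2]{FLS16}, and place its two terms in homological degrees $i+1$ and $i$. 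This is a bounded minimal equivariant complex whose only homology --- the torsion cokernel --- sits in degree $i$, so it is exact away from the rank-deficient locus $V(\det)$, and its rank Betti table is a positive multiple of $e_{i,\lambda}+e_{i+1,\mu}$.

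The main work is the reverse inclusion: the rank Betti table of any bounded minimal complex $F_\bullet$ of equivariant free modules exact away from $V(\det)$ is a finite nonnegative rational combination of the $\shiftedpuretable{i}{\lambda}{\mu}$. I would argue by a greedy ``peeling'' induction on the total rank $\sum_{i,\lambda}\widetilde{\beta}_{i,\lambda}(F_\bullet)$. Two observations organize the step. First, at the top homological degree $N$ the differential $F_N\to F_{N-1}$ is injective, since its kernel is a torsion-free submodule of a free module that is nevertheless supported on $V(\det)$, hence zero. Second, by the Cauchy decomposition $R_{k,k}=\bigoplus_\nu\mb{S}_\nu(\mb{C}^k)\otimes\mb{S}_\nu(\mb{C}^k)$, a nonzero \emph{minimal} equivariant map $\mb{S}_\mu(\mb{C}^k)\otimes R\to\mb{S}_\lambda(\mb{C}^k)\otimes R$ forces $\lambda\subsetneq\mu$ in $\eylat$, so along each nonzero component of a differential the weight strictly increases going up in homological degree. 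Picking $\mu_0$ maximal in $\eylat$ among the weights occurring in $F_N$ and localizing at the height-one prime $(\det)$, the structure theory over the DVR $R_{(\det)}$ exhibits inside the injection $\mb{S}_{\mu_0}(\mb{C}^k)\otimes R\hookrightarrow F_{N-1}$ a rank-one summand of $F_N$, generated by an $\mb{S}_{\mu_0}$-vector, whose image spans (up to a power of $\det$) a rank-one summand of $F_{N-1}$ --- the data of a copy of some $\shiftedpuretable{N-1}{\lambda_0}{\mu_0}$. Subtracting a small multiple of its rank table and performing the matching surgery on $F_\bullet$ (deleting that strand and re-minimalizing) should yield a complex that is again bounded, minimal, and exact away from $V(\det)$, but of strictly smaller total rank, and the induction then closes.

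I expect this surgery to be the principal obstacle, just as the analogous peeling step is the crux of the Eisenbud--Schreyer decomposition algorithm over a polynomial ring. One has to confirm that deleting the chosen pure strand can really be realized at the level of complexes --- that the resulting $F'_\bullet$ is again a bounded minimal complex exact away from $V(\det)$, with no spurious strands outside the original homological range and with exactness off $V(\det)$ preserved after re-minimalization. This is precisely the point at which the fine representation theory of $R_{k,k}$ (which irreducibles $\mb{S}_\mu(\mb{C}^k)$ occur in $\mb{S}_\lambda(\mb{C}^k)\otimes R$, and how their copies meet a minimal differential) and the \cite{FLS16} base case enter in an essential way.

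It remains to identify the supporting hyperplanes, which I would do by computing the dual cone explicitly. A functional $(\ell_{i,\lambda})$ on $\mb{BT}_{k,k}$ is nonnegative on $\ercone{k,k}$ exactly when $\ell_{i,\lambda}+\ell_{i+1,\mu}\ge 0$ for all $i$ and all $\lambda\subsetneq\mu$. These inequalities couple only adjacent homological degrees, so after fixing the values at the even spots the system decouples into one problem per odd spot; the extremal solutions are recorded by the set $S_i$ of weights where $\ell_{i,\cdot}$ is tight, and requiring the bounds forced by the two neighbouring even spots to be simultaneously attainable squeezes each $S_i$ between a down-set and an up-set of $\eylat$, i.e.\ makes it \emph{convex}. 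This produces exactly the hyperplanes indexed by tuples $(\dots,S_{-1},S_1,S_3,\dots)$ of convex subsets; verifying that these facets and the rays $\shiftedpuretable{i}{\lambda}{\mu}$ are mutually dual (each ray lies on a facet-spanning subfamily, and each facet is spanned by rays) shows $\ercone{k,k}$ is rational polyhedral. Specializing to length-one complexes recovers \cite[Theorem 1.2]{FLS16}: a boundary spot has a single neighbour, one of the two constraints is vacuous, and convexity collapses to the condition that $S$ be an order ideal.
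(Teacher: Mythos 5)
Your realization of the rays $\shiftedpuretable{i}{\lambda}{\mu}$ and your reading of the dual cone are both in the right spirit, but the core of your argument --- the ``peeling'' surgery --- has a genuine gap, and the paper's proof goes by a fundamentally different route that sidesteps it entirely.

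The gap: you propose to pick off a pure strand from $F_\bullet$, subtract its rank table, and ``delete that strand and re-minimalize'' to obtain a smaller complex that is still bounded, minimal, and exact off $V(\det)$. You flag this as the principal obstacle, and rightly so, but the issue is worse than you suggest: there is no reason such a surgery exists at the level of complexes. The Smith-normal-form splitting you extract over the DVR $R_{(\det)}$ does not lift to an equivariant direct-sum decomposition of $F_\bullet$ over $R$, so subtracting the rank table of a pure strand need not correspond to any subcomplex or quotient complex. (Recall that even in the classical graded case the Eisenbud--Schreyer decomposition is purely numerical --- no one produces a new module or complex after subtracting a pure table --- and the nontrivial content is that the leftover numerical table still lies in the cone, which is verified by inequalities, not by module-theoretic surgery.) You would need a replacement argument to close the induction, and your proposal does not supply one.

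The paper's route is both different and simpler, and it is worth internalizing. It reformulates membership in $\ercone{k,k}$ as the existence of a perfect matching on the Betti graph $G(\widetilde{\beta})$ (the graph whose vertices are indexed by the rank table entries, with edges $(i,\lambda)\leftarrow(i+1,\mu)$ for $\lambda\subsetneq\mu$). To produce a matching from a realizable $\widetilde{\beta}=\widetilde{\beta}(F_\bullet)$, one tensors the minimal complex with $\mathrm{Frac}(R_{k,k})$: since all homology is $\det$-torsion, the result is an exact sequence of $\mathrm{Frac}(R)$-vector spaces with bases labeled by the $\lambda$'s, and minimality forces the differential to strictly decrease labels. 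Proposition~\ref{prop:les-matching} (the coefficient graph of any long exact sequence of vector spaces admits a perfect matching, by splitting and choosing nonvanishing minors) then yields the matching outright, with no surgery on $F_\bullet$. This gives (i)\,$\Rightarrow$\,(iv) in Theorem~\ref{thm:derived-cone}; (iv)\,$\Rightarrow$\,(iii)\,$\Rightarrow$\,(ii)\,$\Rightarrow$\,(i) are easy, and the equivalence (ii)\,$\Leftrightarrow$\,(iv) is Hall's theorem, which is also where the convexity inequalities indexed by tuples $(\dots,S_{-1},S_1,S_3,\dots)$ come from --- more directly than the dual-cone computation you sketch. Your dual-cone reasoning is consistent with the answer, but the Hall's-theorem packaging is what makes the facet description immediate rather than an additional computation.
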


The key idea in the above theorem is that these Betti tables are characterized by certain perfect matchings. This idea is also crucial in our construction of the pairing between Betti and cohomology tables, so we discuss it now. We introduce a graph-theoretic model of a rank Betti table (in the case of free resolutions, this construction is implicit in \cite[Lemma 3.6]{FLS16}).
\begin{definition}[Betti graphs] \label{def:betti-graph}
Let $\widetilde{\beta} \in \widetilde{\mb{BT}}_{k,k}$ have nonnegative integer entries. The \newword{Betti graph} $G(\widetilde{\beta})$ is defined as follows:
\begin{itemize}
\item The vertex set contains $\widetilde{\beta_{i,\lambda}}$ vertices labeled $(i,\lambda)$, for each $(i,\lambda)$,
\item The edge set contains, for each $i$, all possible edges $(i,\lambda) \leftarrow (i+1,\mu)$ with $\lambda \subsetneq \mu$.
\end{itemize}
Note that this graph is bipartite: every edge connects an even-indexed and an odd-indexed vertex.
\end{definition}
Recall that a {\bf perfect matching} on a graph $G$ is a subset of its edges, such that every vertex of $G$ appears on exactly one chosen edge. A perfect matching on $G(\widetilde{\beta})$ is equivalent to a decomposition of $\widetilde{\beta}$ as a positive integer combination of homologically-shifted pure tables: an edge $(i,\lambda) \leftarrow (i+1,\mu)$ corresponds to a pure summand $\shiftedpuretable{i}{\lambda}{\mu}$. Thus, an equivalent characterization of $\ercone{k,k}$ is:
\begin{theorem}
Let $\widetilde{\beta} \in \mb{BT}_{k,k}$ have nonnegative integer entries. Then $\widetilde{\beta} \in \ercone{k,k}$ if and only if $G(\widetilde{\beta})$ has a perfect matching.
\end{theorem}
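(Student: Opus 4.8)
The plan is to read off this equivalence from the extremal-ray description of $\ercone{k,k}$ in the previous theorem, combined with the classical fact that a fractional perfect matching on a bipartite graph can be rounded to an honest one. The first thing to set up is a dictionary: a nonnegative expression $\widetilde{\beta} = \sum_{i,\ \lambda \subsetneq \mu} c_{i,\lambda,\mu}\,\shiftedpuretable{i}{\lambda}{\mu}$ is literally the same data as a nonnegative weighting $c$ of the edges of $G(\widetilde{\beta})$ whose total weight at the vertices labeled $(i,\lambda)$ equals $\widetilde{\beta_{i,\lambda}}$ for every $(i,\lambda)$, since $\shiftedpuretable{i}{\lambda}{\mu}$ has exactly two nonzero entries, both equal to $1$, in the spots $(i,\lambda)$ and $(i+1,\mu)$ --- precisely the endpoints of an edge $(i,\lambda)\leftarrow(i+1,\mu)$ of $G(\widetilde{\beta})$.

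For the ``if'' direction, I would start from a perfect matching $\mc{M}$ of $G(\widetilde{\beta})$ and let $m_{i,\lambda,\mu}$ be the number of edges of $\mc{M}$ joining a vertex labeled $(i,\lambda)$ to one labeled $(i+1,\mu)$. Since each of the $\widetilde{\beta_{i,\lambda}}$ vertices with label $(i,\lambda)$ meets exactly one edge of $\mc{M}$, counting these incidences gives $\widetilde{\beta} = \sum m_{i,\lambda,\mu}\,\shiftedpuretable{i}{\lambda}{\mu}$ with $m_{i,\lambda,\mu}\in\mb{Z}_{\ge 0}$. Each $\shiftedpuretable{i}{\lambda}{\mu}$ lies in $\ercone{k,k}$ by the previous theorem; concretely, a positive multiple of it is the rank Betti table of the homological shift by $i$ of a two-term pure resolution $\mb{S}_\lambda(\mb{C}^k)^{\oplus c_0}\otimes R \leftarrow \mb{S}_\mu(\mb{C}^k)^{\oplus c_1}\otimes R$ supplied by the $\rcone{k,k}$ theorem of \cite{FLS16}, which is exact off the rank-deficient locus because its unique homology module is annihilated by a power of $(\det)$. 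Since $\ercone{k,k}$ is closed under nonnegative combinations, $\widetilde{\beta}\in\ercone{k,k}$.

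For the ``only if'' direction, I would feed $\widetilde{\beta}\in\ercone{k,k}$ into the previous theorem to obtain real coefficients $c_{i,\lambda,\mu}\ge 0$ with $\widetilde{\beta} = \sum c_{i,\lambda,\mu}\,\shiftedpuretable{i}{\lambda}{\mu}$. Whenever $c_{i,\lambda,\mu}>0$, nonnegativity of all entries forces $\widetilde{\beta_{i,\lambda}}>0$ and $\widetilde{\beta_{i+1,\mu}}>0$, so the corresponding edge and both of its endpoints genuinely occur in $G(\widetilde{\beta})$. Hence $c$ is a point of the polytope $P$ in the variables $(c_{i,\lambda,\mu})$ cut out by $c\ge 0$ and $\sum_{\mu\supsetneq\lambda}c_{i,\lambda,\mu} + \sum_{\nu\subsetneq\lambda}c_{i-1,\nu,\lambda} = \widetilde{\beta_{i,\lambda}}$ for all $(i,\lambda)$. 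The constraint matrix of $P$ is the vertex--edge incidence matrix of the bipartite ``quotient'' graph whose vertices are the pairs $(i,\lambda)$ and whose edges are the triples $(i,\lambda,\mu)$, hence totally unimodular; so the nonempty bounded polytope $P$ has an integer vertex $(m_{i,\lambda,\mu})\in\mb{Z}_{\ge 0}$. Finally, since $G(\widetilde{\beta})$ is complete bipartite between each pair of blocks and the $m_{i,\lambda,\mu}$ exhaust the vertex count of every block, I would distribute the matched edges block-by-block to produce a genuine perfect matching of $G(\widetilde{\beta})$.

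I expect the only substantive step to be the rounding used in the ``only if'' direction --- the integrality of the bipartite $b$-matching polytope. If one prefers to avoid quoting total unimodularity, this is easily proven directly: repeatedly use an (even) cycle in the support of $c$ to move weight $\pm\varepsilon$ around it until the support becomes a forest, then peel leaves, whose single incident edge-weight is forced to equal an integer vertex-capacity. Everything else is bookkeeping, and I anticipate that the real content of this portion of the paper lies in the preceding extremal-ray theorem for $\ercone{k,k}$, of which the present statement is essentially a restatement.
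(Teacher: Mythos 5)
Your ``if'' direction is correct and matches the paper: unfold the perfect matching into a positive integer decomposition into shifted pure tables, then invoke their realizability.

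The ``only if'' direction, however, is circular with respect to the paper's logic. You start by feeding $\widetilde{\beta}\in\ercone{k,k}$ into ``the previous theorem'' to extract a nonnegative real decomposition into shifted pure tables. But in the paper the statement that the extremal rays of $\ercone{k,k}$ are precisely the shifted pure tables is \emph{not} independently established before the present theorem --- both statements are packaged together as Theorem~\ref{thm:derived-cone}, whose genuine content is the implication \emph{(realizable) $\Rightarrow$ (Betti graph has a perfect matching)}, proven in Corollary~\ref{cor:derived-cone-proof}. That implication is exactly what produces the decomposition you are assuming: the paper takes a minimal free equivariant complex $F^\bullet$ realizing $\widetilde{\beta}$, tensors with $\operatorname{Frac}(R)$ to get an exact sequence of $\operatorname{Frac}(R)$-vector spaces with labeled bases, and then produces a perfect matching on its coefficient graph (Proposition~\ref{prop:les-matching}: split the differentials by choosing complements to their images/kernels; the restricted differential between complements is an isomorphism, whose coefficient graph has a perfect matching). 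Minimality of $F^\bullet$ ensures the edges respect the strict containment $\lambda'\subsetneq\lambda$. That homological step is the piece your proposal skips entirely. You correctly anticipate at the end that ``the real content lies in the preceding extremal-ray theorem,'' but in the paper the two statements are proved simultaneously via the matching, so citing one to deduce the other does not produce a proof.

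As a side note, your totally-unimodular rounding of a fractional $b$-matching is a clean, valid alternative to the Hall's-theorem route the paper uses for the (ii) $\Leftrightarrow$ (iv) portion of Theorem~\ref{thm:derived-cone}, and it would have been a perfectly good way to pass from a real decomposition to an integer one \emph{if} a real decomposition were already in hand from an independent source. The gap is precisely in obtaining that decomposition without presupposing the theorem.
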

Our proof proceeds by exhibiting this perfect matching using homological algebra. The supporting hyperplanes of $\ercone{k,k}$ then follow from Hall's Matching Theorem; see Section \ref{subsec:derived-cone} for the precise statement.

\subsubsection{The pairing between Betti tables and cohomology tables} \label{subsubsec:equi-pairing}

We now turn to the Boij-S\"{o}derberg pairing. This will be a bilinear pairing between abstract Betti tables $\beta$ and cohomology tables $\gamma$, satisfying certain nonnegativity properties when restricted to realizable tables.

\begin{definition} \label{def:pairing-intro} Let $\beta \in \mb{BT}_{k,n}$ and $\gamma \in \mb{CT}_{k,n}$ be an abstract Betti table and $GL$-cohomology table. The \newword{equivariant Boij-S\"{o}derberg pairing} is given by
\begin{equation}
\begin{split}
\widetilde{\Phi} : \mb{BT}_{k,n} \times \mb{CT}_{k,n} &\to \widetilde{\mb{BT}}_{k,k}, \\
(\beta,\gamma) \qquad &\mapsto \widetilde{\Phi}(\beta,\gamma),
\end{split}
\end{equation}
with $\widetilde{\Phi}$ the (derived) \emph{rank} Betti table with entries
\begin{equation} \label{eqn:pairing-entries}
\widetilde{\phi_{i,\lambda}}(\beta, \gamma) = \sum_{p-q=i} \beta_{p,\lambda} \cdot \gamma_{q,\lambda}.
\end{equation}
In this definition, recall that the homological index of a complex decreases under the boundary map.
\end{definition}

Here is how to read the definition of $\widetilde{\Phi}$. (See Example \ref{exa:pairing} below.) Form a grid in the first quadrant of the plane, whose $(p,q)$-entry is the collection of numbers $\beta_{p,\lambda} \cdot \gamma_{q,\lambda}$ for all $\lambda$. Only finitely-many of these are nonzero. The line $p-q = i$ is an upwards-sloping diagonal through this grid, and $\widetilde{\phi_{i,\lambda}}$ is the sum of the $\lambda$ terms along this diagonal.

\begin{remark} We emphasize that the pairing takes a \emph{multiplicity} Betti table $\beta$ and a cohomology table $\gamma$, and produces a \emph{rank} Betti table $\widetilde{\Phi}$. Intuitively, the entries of $\gamma$ are dimensions of certain vector spaces (from sheaf cohomology), which, we will see, arise with multiplicities given by $\beta$ in a certain spectral sequence. In particular, the quantities in \eqref{eqn:pairing-entries} are again dimensions of vector spaces -- that is, they give a rank table.
\end{remark}
The final main result of this paper is the nonnegativity of the pairing:
\begin{theorem}[Pairing the equivariant cones] \label{thm:pairing-intro}
The pairing $\widetilde{\Phi}$ restricts to a map of cones,
\[\mcone{k,n} \times \escone{k,n} \to \ercone{k,k}.\]
The same is true with $\mcone{k,n}$ replaced by $\emcone{k,n}$ on the source.
\end{theorem}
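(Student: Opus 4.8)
The plan is to leverage the classification of $\ercone{k,k}$ by perfect matchings of Betti graphs, feeding it a single exact complex of vector bundles on $Gr(k,\mb{C}^n)$ obtained from a sheafified resolution. First, since $\widetilde{\Phi}$ is bilinear and the two cones on the source side are by definition positive spans of the (rank) tables of honest objects, it suffices to fix a module $M$ satisfying Condition~\ref{cond:modules-of-interest} --- or, more generally, a bounded minimal equivariant free complex $F_\bullet$ over $R_{k,n}$ that is exact off the rank-deficient locus, which takes care of $\emcone{k,n}$ --- together with a coherent sheaf $\mc{E}$ on $Gr(k,\mb{C}^n)$, and to show $\widetilde{\Phi}(\beta(M),\gamma(\mc{E}))\in\ercone{k,k}$. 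This table has nonnegative integer entries, so by the perfect-matching characterization of $\ercone{k,k}$ the theorem reduces to producing a perfect matching of the Betti graph $G\big(\widetilde{\Phi}(\beta(M),\gamma(\mc{E}))\big)$ --- equivalently, realizing this table as the rank Betti table of a bounded minimal complex of equivariant free $R_{k,k}$-modules exact away from $V(\det)$.

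The geometric input is obtained by sheafifying. Let $F_\bullet$ be the minimal equivariant free resolution of $M$ (resp.\ the given complex), and sheafify it on $Gr(k,\mb{C}^n)$ via the $GL_k$-torsor $\pi\colon U\to Gr(k,\mb{C}^n)$, where $U\subset\Spec R_{k,n}$ is the full-rank locus; this sends $\mb{S}_\lambda(\mb{C}^k)\otimes R_{k,n}$ to $\mb{S}_\lambda(\mc{S})$, so the sheafification $\mc{F}_\bullet$ has $\mc{F}_p=\bigoplus_\lambda\mb{S}_\lambda(\mc{S})^{\oplus\beta_{p,\lambda}}$. Every homology module of $F_\bullet$ is supported on the rank-deficient locus $V(P_k)$ --- because the sheaf of $M$ on $Gr(k,\mb{C}^n)$ vanishes (Condition~\ref{cond:modules-of-interest}, equivalently Theorem~\ref{thm:equi-hk}(iii)), resp.\ by hypothesis on the complex --- so it sheafifies to zero, making $\mc{F}_\bullet$ an \emph{exact} bounded complex of vector bundles. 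Such a complex is locally split, so tensoring over $\mc{O}_{Gr}$ with $\mc{E}$ preserves exactness: $\mc{G}_\bullet:=\mc{F}_\bullet\otimes_{\mc{O}_{Gr}}\mc{E}$ is again an exact complex of sheaves.

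Now pass to cohomology. Since $\mc{G}_\bullet$ is exact, its hypercohomology vanishes, and the hypercohomology spectral sequence has first page $E_1^{p,q}=H^q\big(Gr(k,\mb{C}^n),\mc{F}_p\otimes\mc{E}\big)=\bigoplus_\lambda H^q\big(\mc{E}\otimes\mb{S}_\lambda(\mc{S})\big)^{\oplus\beta_{p,\lambda}}$, in which the type-$\lambda$ summand occurs in multiplicity $\beta_{p,\lambda}\gamma_{q,\lambda}$ at the spot $(p,q)$; collecting these contributions along the diagonals $p-q=i$, by type, reproduces exactly the entries $\widetilde{\phi_{i,\lambda}}$ of $\widetilde{\Phi}(\beta(M),\gamma(\mc{E}))$. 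One then wants to convert the vanishing of this spectral sequence into a perfect matching of $G\big(\widetilde{\Phi}(\beta(M),\gamma(\mc{E}))\big)$. The structural fact that links the Grassmannian geometry to the combinatorics of $\eylat$ is that the generators of $R_{k,n}$ transform in the defining representation $\mb{C}^k=\mb{S}_{(1)}$: by the Littlewood--Richardson rule, every nonzero component $\mb{S}_\lambda\otimes R_{k,n}\to\mb{S}_\mu\otimes R_{k,n}$ of the differential of a minimal equivariant complex has $\lambda\subsetneq\mu$. This property passes to the sheafified differential of $\mc{F}_\bullet$ and hence to all the maps the spectral sequence builds between cohomology contributions (each higher differential applies that differential one or more times, strictly decreasing the type, while keeping the difference $p-q$ shifting by exactly one). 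Thus every cancellation the spectral sequence performs pairs a type-$\mu$ contribution in some homological degree with a type-$\lambda$ contribution one degree below, $\lambda\subsetneq\mu$ --- exactly the edges available in the Betti graph --- and assembling all the cancellations into a single matching exhibits $\widetilde{\Phi}(\beta(M),\gamma(\mc{E}))$ as a nonnegative integer sum of homologically shifted pure tables, i.e.\ as an element of $\ercone{k,k}$. Since Cohen--Macaulayness and the equality $\sqrt{\ann(M)}=P_k$ played no role --- only the vanishing of the sheaf of $M$ on $Gr(k,\mb{C}^n)$ --- this argument applies verbatim with $\mcone{k,n}$ replaced by $\emcone{k,n}$.

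I expect the last step --- converting the vanishing spectral sequence into a \emph{type-graded} perfect matching --- to be the main obstacle. The spectral-sequence differentials are only block-upper-triangular with respect to the partial order on $\eylat$, mixing the types $\lambda$ rather than preserving them, so one must follow each cohomology dimension through the successive pages and choose the comparisons and splittings so that the matching it induces lands on genuine Betti-graph edges (strictly comparable pairs in consecutive homological degrees) and not merely on pairs adjacent in the coarser size-filtration; in the case of free resolutions this is the content behind \cite[Lemma 3.6]{FLS16}. As a safeguard one can instead try to verify Hall's condition for $G\big(\widetilde{\Phi}(\beta(M),\gamma(\mc{E}))\big)$ directly: its bipartition already has equal sides, since their difference factors as $\sum_\lambda\big(\sum_p(-1)^p\beta_{p,\lambda}\big)\cdot\chi\big(\mc{E}\otimes\mb{S}_\lambda(\mc{S})\big)$, which vanishes because the class of $M$ dies under the restriction map $K^{GL_k}(\Spec R_{k,n})\to K(Gr(k,\mb{C}^n))$ --- the equivariant Herzog--K\"uhl content of Theorem~\ref{thm:equi-hk} --- and one would hope to recover the remaining deficiency inequalities from the same vanishing applied to brutal truncations of $\mc{G}_\bullet$.
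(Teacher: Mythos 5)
Your setup follows the paper exactly: sheafify $F_\bullet$ to $Gr(k,\mb{C}^n)$, observe that $\mc{F}_\bullet\otimes\mc{E}$ is an exact complex of vector bundles, take the hypercohomology spectral sequence, identify the $E_1$ page entries with the entries of $\widetilde{\Phi}(\beta,\gamma)$ diagonal-by-diagonal, and then ask for a perfect matching of the Betti graph. You also correctly name the obstacle: the higher spectral-sequence differentials $d_2, d_3, \ldots$ are only block-upper-triangular in the label $\lambda$, so a cancellation on page $r$ need not pair a type-$\mu$ class with a strictly smaller type $\lambda\subsetneq\mu$. This is not merely a technical worry you can ``choose the splittings carefully'' around --- the paper's cautionary example at the end of Section~\ref{sec:the-pairing} exhibits a double complex satisfying all the filtration hypotheses whose spectral sequence converges to zero, but whose $E_3$ differential goes from ${\tiny\yng(3)}$ to ${\tiny\yng(1,1)}$, which is \emph{not} a Betti-graph edge. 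No combination of the $E_r$ differentials yields the required matching in that example. So the plan ``assemble the cancellations the spectral sequence performs into a matching'' is genuinely wrong as stated, not just unfinished.

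The missing idea is that one must \emph{not} run the spectral sequence past $E_1$. The paper (Theorem~\ref{thm:e1-perfect-matching}) instead works directly on the $E_0$ page: it splits each $E^{p,q}$ as $B\oplus H\oplus B^*$ (boundaries, harmonic representatives, preimages of boundaries), picks bases, and then \emph{modifies} the $B$-basis vectors by $\tilde b_\lambda = d_{tot}(b^*_\lambda)$. This is chosen so that $\widetilde B + B^*$ spans an exact subcomplex of $\mr{Tot}(E)$ and the quotient $\mr{Tot}(H)$ is still exact; crucially, the new basis change is unitriangular in the labels, so the coefficient graph of $\mr{Tot}(H)$ only has edges of the form $(i,\lambda)\leftarrow(i+1,\mu)$ with $\lambda\subsetneq\mu$. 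One then applies the matching lemma for long exact sequences (Proposition~\ref{prop:les-matching}). That is, the paper replaces $\mr{Tot}(E_1)$ by a quasi-isomorphic complex $\mr{Tot}(H)$ with the same terms but different, properly filtered maps, and matches using \emph{that} complex, never invoking $d_2,d_3,\ldots$. Your ``safeguard'' route --- verify Hall's condition directly by computing the Euler characteristic and attacking the deficiency inequalities via brutal truncations of $\mc{G}_\bullet$ --- gets the equal-sides part right (and for the same $K$-theoretic reason), but the truncation argument is not carried out, and truncating exact complexes introduces homology that makes the required inequalities far from automatic. So the proposal identifies the right reduction and the right obstacle, but the key homological-algebra construction that resolves it is absent.
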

In particular, the defining inequalities of the cone $\ercone{k,k}$ (which we give explicitly) pull back to nonnegative bilinear pairings of Betti and cohomology tables, and the Betti graph of $\widetilde{\Phi}(\beta,\gamma)$ has a perfect matching. We think of this as a reduction to the base case of square matrices ($k=n$). A geometric consequence is that each equivariant Betti table induces many interesting linear inequalities constraining sheaf cohomology on $Gr(k,\mb{C}^n)$.

Our proof proceeds by constructing a perfect matching on $\widetilde{\Phi}(\beta,\gamma)$, not by passing to actual modules over $R_{k,k}$. It would be interesting to see a `categorified' form of the pairing, in the style of Eisenbud-Erman \cite{EE2012}. Such a pairing would construct, from a complex $F_\bullet$ of $R_{k,n}$-modules and a sheaf $\mc{E}$, a module (or complex) over $R_{k,k}$. Theorem \ref{thm:pairing-intro} would follow from showing that this module is supported along the determinant locus (or that the complex is exact away from the determinant locus).
%
The authors welcome any communication or ideas in this direction.

\begin{example} \label{exa:pairing}
Let us pair the following tables for $k = 2$, $n=3$. Both are realizable; the cohomology table is for the sheaf $\mc{E} = \mc{O}(1) \oplus \mc{O}(-1)$.
\begin{equation} \label{eqn:example-pairing}
\begin{array}{c|ccc}
\beta_{p,\lambda} & 0 & 1 & 2 \\ \hline
\scalebox{.5}{\yng(1)}   & 4 & - & - \\
\scalebox{.5}{\yng(2)}   & - & 1 & - \\
\scalebox{.5}{\yng(1,1)} & - & 9 & - \\
\scalebox{.5}{\yng(2,1)} & - & 3 & 3 \\
\scalebox{.5}{\yng(3,1)} & - & - & 1 \\
\scalebox{.5}{\yng(2,2)} & - & - & 1 \\
\end{array}
\qquad \times \qquad
\begin{array}{c|ccc}
\gamma_{q,\lambda} & 0 & 1 & 2 \\ \hline
\scalebox{.5}{\yng(1)}   & 3 & 1 & - \\
\scalebox{.5}{\yng(2)}   & - & 3 & - \\
\scalebox{.5}{\yng(1,1)} & 1 & - & - \\
\scalebox{.5}{\yng(2,1)} & - & - & - \\
\scalebox{.5}{\yng(3,1)} & - & 3 & - \\
\scalebox{.5}{\yng(2,2)} & - & - & 1 \\
\end{array}
\end{equation}
We arrange the pairwise products in a first-quadrant grid. The sums along the diagonals $\{p-q=i\}$ result in the rank Betti table $\widetilde{\Phi}$:
 \vspace{0.1cm}
\begin{equation}
\vcenter{\xymatrix{
- & - & 1 \cdot \scalebox{.5}{\yng(2,2)} \\
4 \cdot \scalebox{.5}{\yng(1)} & 3 \cdot \scalebox{.5}{\yng(2)} & 3 \cdot \scalebox{.5}{\yng(3,1)} \\
\ar@<6ex>@{..>}[]+/d 4.5ex/;[uu]+/u 3ex/^q \ar@<-5ex>@{..>}[]+/l 5ex/;[rr]+/r 5ex/_p
12 \cdot \scalebox{.5}{\yng(1)} & 9 \cdot \scalebox{.5}{\yng(1,1)}  & - }}
\qquad \leadsto \qquad
\begin{array}{c|ccc}
\widetilde{\phi_{i,\lambda}} & -1 & 0 & 1 \\ \hline
\scalebox{.5}{\yng(1)}   & 4 & 12 & - \\
\scalebox{.5}{\yng(2)}   & - & 3 & - \\
\scalebox{.5}{\yng(1,1)} & - & - & 9 \\
\scalebox{.5}{\yng(3,1)} & - & - & 3 \\
\scalebox{.5}{\yng(2,2)} & - & 1 & - \\
\end{array}
\end{equation}
Finally, we check that $\widetilde{\Phi} \in \ercone{k,k}$.  The decomposition of $\widetilde{\Phi}$ into pure tables happens to be unique (this is not true in general):
\[\widetilde{\Phi} =
3 \ \shiftedpuretable{-1}{\tiny \yng(1)}{\tiny \yng(2)}
+ \shiftedpuretable{-1}{\tiny \yng(1)}{\tiny \yng(2,2)}
+ 9 \ \shiftedpuretable{0}{\tiny \yng(1)}{\tiny \yng(1,1)}
+ 3 \ \shiftedpuretable{0}{\tiny \yng(1)}{\tiny \yng(3,1)}.\]
This corresponds to an essentially-unique perfect matching on $G(\widetilde{\Phi})$.
\end{example}

\subsection{Structure of the paper}
Section \ref{sec:background} contains background on algebra and representation theory. Sections \ref{sec:equi-hk}, \ref{sec:square-mats} and \ref{sec:the-pairing} respectively establish the equivariant Herzog-K\"{u}hl equations, the results on square matrices, and the pairing of Betti and cohomology tables. Finally, Section \ref{sec:2x3mats} has some preliminary results and examples for the case $k=2,n=3$.

\subsection{Acknowledgments}
\label{subsec:acknowledgments}

This work has benefited substantially from conversations and collaboration with many people: Daniel Erman (who introduced the second author to Boij-S\"{o}derberg theory), Steven Sam, David Speyer, Greg Muller and Maria Gillespie. We are particularly grateful to David Speyer for discussions surrounding Theorem \ref{thm:equi-hk} (the equivariant Herzog-K\"{u}hl equations), and to both Greg Muller and David Speyer for several productive discussions around the proof of the numerical pairing, Theorem  \ref{thm:pairing-intro}. 

Finally, computations of free resolutions and Betti tables in Macaulay2 \cite{M2}, and facets and rays of cones in Sage \cite{sage} (via SageMathCloud) have been, and continue to be, instrumental.

\section{Background} \label{sec:background}

\subsection{Spaces of interest}

Throughout, let $V, W$ be fixed $\mb{C}$-vector spaces of dimensions $k$ and $n$, with $k \leq n$. We set
\[X = \Hom(V,W), \qquad R_{k,n} = \Sym(\Hom(V,W)^*) \cong \mb{C}\Big[ x_{ij} : \begin{aligned} &1 \leq i \leq k \\ &1 \leq j \leq n\end{aligned} \Big],\]
so $X = \Spec(R_{k,n})$, the affine variety of $k \times n$ matrices, and $R_{k,n}$ is the polynomial ring whose variables are the entries of the matrix. We also consider the subvarieties of full-rank and rank-deficient matrices,
\[U = \Emb(V,W) = \{T : \ker(T) = 0\}, \qquad X_{k-1} = X - U,\]
which are open and closed, respectively. The locus $X_{k-1}$ is integral and has codimension $n-k+1$. Its prime ideal $P_k$ is generated by the $\binom{n}{k}$ maximal minors of the $k \times n$ matrix $(x_{ij})$. Each of the spaces $X$, $R_{k,n}$, $U$ and $X_{k-1}$ has an action of $GL(V)$ and $GL(W)$; we will primarily care about the $GL(V)$ action.

\subsection{\texorpdfstring{$GL$}{GL}-Representation theory}

A good introduction to these notions is \cite{Fulton}. The irreducible algebraic representations of $GL(V)$ are indexed by weakly-decreasing integer sequences $\lambda = (\lambda_1 \geq \cdots \geq \lambda_k)$, where $k = \dim(V)$. We write $\mb{S}_\lambda(V)$ for the corresponding representation, and $d_\lambda(k)$ for its dimension. We call $\mb{S}_\lambda$ a \newword{Schur functor}. If $\lambda$ has all nonnegative parts, we write $\lambda \geq 0$ and say $\lambda$ is a \newword{partition}. In this case, $\mb{S}_\lambda(V)$ is functorial for linear transformations $V \to W$. If $\lambda$ has negative parts, $\mb{S}_\lambda$ is only functorial for isomorphisms $V \xrightarrow{\sim} W$.

We often represent partitions by their Young diagrams:
\[
\lambda = (3,1) \longleftrightarrow \lambda = {\tiny \yng(3,1)}.
\]
We partially order partitions and integer sequences by containment:
\[
\lambda \subseteq \mu \text{ if } \lambda_i \leq \mu_i \text{ for all } i.
\]
We write $\mb{Y}$ for the poset of all partitions with this ordering, called \newword{Young's lattice}. We write $\mb{Y}_{\pm}$ for the set of all weakly-decreasing integer sequences; we call it the \newword{extended Young's lattice}. Schur functors include symmetric and exterior powers:
\begin{align*}
\lambda = d \bigg\{ {\tiny \young(\hfil,\hfil,\hfil,\hfil)}\hspace{0.5cm} &\Longleftrightarrow\ \mb{S}_\lambda(V) = \Alt^d(V),\\
\lambda =  \overbrace{{\tiny \young(\hfil\hfil\hfil\hfil)}}^d\ &\Longleftrightarrow\ \mb{S}_\lambda(V) = \Sym^d(V).
\end{align*}
We'll write $\det(V)$ for the one-dimensional representation $\Alt^{\dim(V)}(V)=\mb{S}_{1^k}(V)$. We may always twist a representation by powers of the determinant:
\[
\det(V)^{\otimes a} \otimes \mb{S}_{\lambda_1, \ldots, \lambda_k}(V) = \mb{S}_{\lambda_1 + a, \ldots, \lambda_k + a}(V) 
\]
for any integer $a \in \mb{Z}$. This operation is invertible and can sometimes be used to reduce to considering the case when $\lambda$ is a partition.

\subsection{Equivariant rings and modules}

If $R$ is a $\mb{C}$-algebra with an action of $GL(V)$, and $S$ is any $GL(V)$-representation, then $S \otimes_\mb{C} R$ is an \newword{equivariant free $R$-module}; it has the universal property
\[\Hom_{GL(V),R}(S \otimes_\mb{C} R, M) \cong \Hom_{GL(V)}(S,M)\]
for all equivariant $R$-modules $M$. The basic examples will be the modules $\mb{S}_\lambda(V) \otimes R$.

Let $R = R_{k,n}$ be the polynomial ring defined above. Its structure as a $GL(V) \times GL(W)$ representation is known as the \newword{Cauchy identity}:
\[R_{k,n} = \Sym^\bullet(\Hom(V,W)^*) \cong \bigoplus_{\lambda \geq 0} \mb{S}_\lambda(V) \otimes \mb{S}_\lambda(W^*).\]
Note that the prime ideal $P_k$ and the maximal ideal $\mf{m} = (x_{ij})$ of the zero matrix are $GL(V)$- and $GL(W)$-equivariant.

Let $M$ be a finitely-generated $GL(V)$-equivariant $R$-module. The module $\Tor_R^i(R/\mf{m},M)$ naturally has the structure of a finite-dimensional $GL(V)$-representation. We define the \newword{equivariant Betti number} $\beta_{i,\lambda}(M)$ as the multiplicity of the Schur functor $\mb{S}_\lambda(V)$ in this Tor module, i.e.
\begin{equation*}\label{eqn:multiplicity-betti-number}
\Tor_R^i(R/\mf{m},M)\ \cong\ \bigoplus_\lambda \mb{S}_\lambda(V)^{\oplus \beta_{i,\lambda}(M)} \qquad \text{(as $GL(V)$-representations).}
\end{equation*}
By semisimplicity of $GL(V)$-representations, any minimal free resolution of $M$ can be made equivariant, so we may instead define $\beta_{i,\lambda}$ as the multiplicity of the equivariant free module $\mb{S}_\lambda(V) \otimes R$ in the $i$-th step of an equivariant minimal free resolution of $M$:
\begin{equation*}
M \leftarrow F_0 \leftarrow F_1 \leftarrow \cdots \leftarrow F_d \leftarrow 0, \text{ where } F_i = \bigoplus_\lambda \mb{S}_\lambda(V)^{\beta_{i,\lambda}(M)} \otimes R.
\end{equation*}
All other notation on Betti tables is as defined in Section \ref{subsec:grass-boijsoderberg}.

\section{The equivariant Herzog-K\"{u}hl equations}
\label{sec:equi-hk}

In this section we derive the equivariant analogue of the Herzog-K\"{u}hl equations. This will be a system of linear conditions on the entries of an equivariant Betti table. It will detect when the resolved module $M$ is supported only along the locus of rank-deficient matrices.

\subsection{K-theory rings} \label{subsec:ktheory-bg}

For background on equivariant $K$-theory, we refer to the original paper by Thomason \cite{Th87}; a more recent discussion is \cite{Me05}.

Excision in equivariant K-theory (\cite[Theorem 2.7]{Th87}) gives the right-exact sequence of abelian groups
\[K^{GL(V)}(X_{k-1}) \xrightarrow{i_*} K^{GL(V)}(\Hom(V,W)) \xrightarrow{j^*} K^{GL(V)}(U) \to 0.\]
The pullback $j^*$, induced by the open inclusion $j : U \into X$, is a map of rings. The pushforward $i_*$, induced by the closed embedding $i : X_{k-1} \into X$, is only a map of abelian groups. Its image is the ideal $I$ generated by the classes of modules supported along the rank-deficient locus $X_{k-1}$.

We do not attempt to describe the first term. For the second term, we have (\cite[Theorem 4.1]{Th87} or \cite[Example 2 and Corollary 12]{Me05})
\[K^{GL(V)}(\Hom(V,W)) \cong \mb{Z}[x_1^\pm, \ldots, x_k^\pm]^{S_k},\]
the ring of symmetric Laurent polynomials in $k$ variables (essentially the representation ring of $GL(V)$). Here, the class of the equivariant $R$-module $\mb{S}_\lambda(V) \otimes_\mb{C} R$ is identified with the Schur polynomial $s_\lambda(t_1, \ldots, t_k)$. If $M$ is a finitely-generated $R$-module, its equivariant minimal free resolution expresses the K-class $[M]$ as a finite alternating sum of Schur polynomials. In other words, the equivariant Betti table determines the K-class:
\[[M] = \sum_{i,\lambda} (-1)^i \beta_{i,\lambda}(M) s_\lambda(t).\]
An equivalent approach is to write
\[M \cong \bigoplus_\lambda \mb{S}_\lambda(V)^{c_\lambda(M)} \text{ as a $GL(V)$-representation},\]
and define the equivariant Hilbert series of $M$,
\begin{align*}
H_M(t) &= \sum_\lambda c_\lambda(M) s_\lambda(t) \\
&=\frac{f(t)}{\prod_{i=1}^k(1-t_i)^n}
\end{align*}
for some symmetric function $f(t)$. Then $f(t)$ is the K-theory class of $M$. (If we forget the $GL(V)$ action and remember only the grading of $M$, we recover the usual Hilbert series.)

To see that these definitions agree, note that the second definition is additive in short exact sequences, hence is well-defined on K-classes. Replacing $M$ by its equivariant minimal free resolution, it suffices to consider indecomposable free modules $M = \mb{S}_\lambda(V) \otimes_\mb{C} R$. Then the Cauchy identity shows $H_M(t) = s_\lambda(t)$.

It will be convenient in this section to restrict to modules $M$ generated in positive degree, i.e. $\beta_{i,\lambda}(M) \ne 0$ implies $\lambda \geq 0$. In this case, the class of $M$ is a polynomial, not a Laurent polynomial. We write $K^{GL(V)}_+(\Hom(V,W))$ for this subring.

Finally, we have for the third term (cf. \cite[Proposition 6.2]{Th87}) \[K^{GL(V)}(U) \cong K(U/GL(V)) = K(Gr(k,W)),\]
because the action of $GL(V)$ is free on $U$. The structure of this ring is well-known from K-theoretic Schubert calculus (e.g. \cite{KK90} or \cite{Bu02}). We will only need to know the following: it is a free abelian group with an additive basis consisting of $\binom{n}{k}$ generators, indexed by partitions $\mu$ fitting inside a $k \times (n-k)$ rectangle. These correspond to the classes $[\mc{O}_\mu]$ of structure sheaves of Schubert varieties. It is easy to check that $K^{GL(V)}_+(\Hom(V,W)) \to K_0(Gr(k,W))$ is also surjective (because, e.g., matrix Schubert varieties are generated in positive degree).

\subsection{Modules on the rank-deficient locus and the equivariant Herzog-K\"uhl equations} \label{subsec:FLSHK}

From the surjection $K^{GL(V)}_+(\Hom(V,W)) \to K_0(Gr(k,W))$, we see that the ideal 
\[I' := I \cap K^{GL(V)}_+(\Hom(V,W)),\] as a linear subspace, has co-rank $\binom{n}{k}$. We wish to find exactly this many linear equations cutting out the ideal, indexed appropriately by partitions. That is, given a K-class written in the Schur basis,
\[f = \sum_{\lambda \geq 0} a_\lambda s_\lambda \in K^{GL(V)}_+(\Hom(V,W)),\]
we wish to have coefficients $b_{\lambda \mu}$ for each $\mu \subseteq \rect$, such that
\[f \in I' \text{ if and only if } \sum_{\lambda \geq 0} a_\lambda b_{\lambda \mu} = 0 \text{ for all } \mu \subseteq \rect\ .\]
We will then apply these equations in the case where $f$ is the class of a module $M$, and \[a_\lambda = \sum_i (-1)^i \beta_{i,\lambda}(M)\] comes from the equivariant Betti table of $M$. Our approach is to prove the following:
\begin{theorem} \label{thm:basis-for-I}
We have $I' = \mr{span}_\mb{C} \big\{ s_\lambda(1-t) : \lambda \not\subseteq \rect\big\}$.
\end{theorem}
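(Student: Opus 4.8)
My plan is to prove the two inclusions separately, using the substitution $t \mapsto 1-t$ as a ring automorphism and the known structure of $K(Gr(k,W))$. First I would make precise the operation $s_\lambda(1-t)$: here $1-t$ is the "virtual alphabet" notation, so $s_\lambda(1-t)$ means the image of $s_\lambda$ under the $\lambda$-ring (plethystic) substitution sending the power sums $p_i$ to $1 - t_1^i - \cdots - t_k^i$; equivalently, it is $\sum_{\nu} (-1)^{|\nu|} s_{\lambda/\nu'}(t) \cdot [\text{coefficient extraction}]$, but the clean statement I want is that $f \mapsto f(1-t)$ is the ring automorphism of $K^{GL(V)}(\Hom(V,W)) \otimes \mb{Q}$ coming from tensoring with the Koszul class of $\Hom(V,W)^*$, i.e. multiplication by $\prod_i (1-t_i)$ corresponds to resolving $R/\mf{m}$. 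The first key observation is that $\{s_\lambda(1-t) : \lambda \geq 0\}$ is again a basis (since the substitution is invertible, with inverse $t \mapsto 1-t$ again, up to sign bookkeeping), so the claim is really that the span of those $s_\lambda(1-t)$ with $\lambda \not\subseteq \rect$ is a complement to a set of size $\binom{n}{k}$ that maps isomorphically onto $K(Gr(k,W))$.

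Next I would identify, under the quotient map $j^* : K^{GL(V)}_+(\Hom(V,W)) \to K(Gr(k,W))$, what $s_\lambda(1-t)$ maps to. The point is that $s_\lambda(\mc{S})$ for $\mc{S}$ the tautological bundle — or rather the appropriate dual/twist — corresponds on the Grassmannian side to $s_\lambda$ evaluated on the Chern roots of $\mc{S}$, and the relation $\mc{W} = \mc{S} \oplus \mc{Q}$ in K-theory (with $\mc{W}$ trivial of rank $n$) forces $s_\lambda$ of the $\mc{S}$-roots to vanish when $\lambda \not\subseteq \rect$: this is exactly the statement that the Schur polynomials indexed by partitions not fitting in the $k \times (n-k)$ box vanish in $K(Gr(k,W))$, which is the standard presentation of the K-ring (or cohomology ring) of the Grassmannian. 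So I would show $j^*(s_\lambda(1-t)) = 0$ for $\lambda \not\subseteq \rect$, giving the inclusion $\mr{span}\{s_\lambda(1-t) : \lambda \not\subseteq \rect\} \subseteq \ker(j^*) = I'$. For the reverse inclusion, I would check that the classes $j^*(s_\mu(1-t))$ for $\mu \subseteq \rect$ form a basis of $K(Gr(k,W))$ — e.g. by a triangularity argument against the Schubert basis $[\mc{O}_\mu]$, since $s_\mu(1-t)$ is, up to lower-order terms in the dominance/containment order, the class of the corresponding Schubert variety — and hence the remaining $s_\lambda(1-t)$ ($\lambda \not\subseteq \rect$) span a subspace of dimension $\geq$ (corank of $I'$) inside $I'$, forcing equality by the dimension count already established ($\mr{corank}(I') = \binom{n}{k}$).

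The main obstacle I anticipate is the bookkeeping in the second step: correctly matching the plethystic substitution $s_\lambda(1-t)$ with a geometric operation on $Gr(k,W)$ so that the vanishing $\lambda \not\subseteq \rect \Rightarrow j^*(s_\lambda(1-t)) = 0$ is manifest, and simultaneously getting the triangularity against the Schubert basis right (including which partial order, and the precise twist by $\det$ relating $\mc{S}$ and $\mc{S}^*$). One clean way to handle both at once is to use the Koszul resolution: $\prod_i(1-t_i)^n = \prod_i(1-t_i)^{n-k} \cdot \prod_i(1-t_i)^k$, and the factor $\prod_i (1 - t_i)^k$ is (up to the substitution) the class that becomes invertible on $U$, while $\prod_i(1-t_i)^{n-k}$ corresponds to the Koszul complex on $\mc{Q}^*$, whose hypercohomology on the Grassmannian is governed by Bott's theorem — this is the mechanism that kills exactly the $\lambda \not\subseteq \rect$ Schur functions. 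If the plethystic identities get unwieldy, I would fall back on the explicit combinatorial description of $s_\lambda(1-t)$ in terms of ribbon/border-strip expansions and verify the vanishing and triangularity directly in the Schur basis. Once these are in place, the theorem follows formally from the corank computation preceding the statement.
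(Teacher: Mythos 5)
Your overall strategy---reduce to showing $j^*(s_\lambda(1-t))=0$ in $K(Gr(k,W))$ for $\lambda\not\subseteq\rect$, then conclude by the corank count---is the right shape, and the final corank argument does work exactly as you say (so the triangularity business against the Schubert basis is superfluous). But the justification of the key vanishing is where the gap lies, and you have, in fact, misstated the relevant geometric input. You write that the splitting $\mc{W}=\mc{S}\oplus\mc{Q}$ ``forces $s_\lambda$ of the $\mc{S}$-roots to vanish when $\lambda\not\subseteq\rect$'' and call this ``the standard presentation of the K-ring (or cohomology ring).'' In cohomology this is true: $s_\lambda$ of the \emph{Chern roots} of $\mc{S}^*$ vanishes for $\lambda\not\subseteq\rect$. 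In K-theory it is false as stated: with $t_i$ the K-roots of $\mc{S}$ (so that $j^*(s_\lambda(t))=[\mb{S}_\lambda(\mc{S})]$), the class $s_\lambda(t)=[\mb{S}_\lambda(\mc{S})]$ is a nonzero vector bundle even for $\lambda$ far outside $\rect$. Already for $k=1$, $n=2$, $s_{(2)}(t)=t^2=[\mc{O}(-2)]\ne 0$ in $K(\mb{P}^1)$; what vanishes is $(1-t)^2$. So the ``standard presentation'' you want to cite must already be in the $1-t$ normalization, and pinning down that normalization is precisely the content of the theorem---appealing to it is circular unless you produce a reference that states it in exactly this form and with exactly the conventions set up in Section~\ref{subsec:ktheory-bg}.

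The paper closes exactly this gap by explicit computation rather than citation. It starts from the identity $\big(\sum_p [\bigwedge^p\mc{S}]u^p\big)\big(\sum_q [\bigwedge^q\mc{Q}]u^q\big)=(1+u)^n$ and uses that the left-hand factor for $\mc{Q}$ has degree $n-k$ to conclude that certain polynomials $f_\ell\in I'$ for $\ell>n-k$. It then runs a nontrivial symmetric-function calculation (Stanley's change-of-basis formula for $t\mapsto 1-t$ plus a finite-difference identity) to show $I'\supseteq(h_{n-k+1}(1-t),\dots,h_n(1-t))$, and then Newton, Pieri, and Jacobi--Trudi give the span $\{s_\lambda(1-t):\lambda\not\subseteq\rect\}$; the corank count finishes. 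Your ``fall-back'' option (ribbon/border-strip expansions and direct verification in the Schur basis) is essentially this computation. Two smaller remarks: (i) $f\mapsto f(1-t)$ is an automorphism of $K_+^{GL(V)}$, not of the Laurent ring $K^{GL(V)}(\Hom(V,W))\otimes\mb{Q}$ (negative powers of $t_i$ do not substitute to polynomials), which is why the paper restricts to modules generated in positive degree; and (ii) the inverse of $t\mapsto 1-t$ is itself, with no sign bookkeeping needed.
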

\noindent We will prove Theorem \ref{thm:basis-for-I} in the next section. Here is how it leads to the desired equations. Let $b_{\lambda \mu}$ be the change-of-basis coefficients defined by sending $t_i \mapsto 1-t_i$. So, by definition,
\[s_\lambda(1-t) = \sum_\mu b_{\lambda \mu} s_\mu(t).\]
Note that we have, equivalently,
\[s_\lambda(t) = \sum_\mu b_{\lambda \mu} s_\mu(1-t).\]
Thus
\[f = \sum_\lambda a_\lambda s_\lambda(t) = \sum_{\lambda,\mu} a_\lambda b_{\lambda \mu} s_\mu(1-t).\]
The polynomials $s_\mu(1-t)$ for all $\mu\geq 0$ form an additive basis for the $K^{GL(V)}_+(\Hom(V,W))$. Thus, $f \in I'$ if and only if the coefficient of $s_\mu(1-t)$ is $0$ for all $\mu \subseteq \rect$. That is,
\[0 = \sum_\lambda a_\lambda b_{\lambda \mu} \text{ for all } \mu \subseteq \rect\ .\]
The following description of $b_{\lambda \mu}$ is due to Stanley. Recall that, if $\mu \subseteq \lambda$ are partitions, the \newword{skew shape} $\lambda/\mu$ is the Young diagram of $\lambda$ with the squares of $\mu$ deleted. A \newword{standard Young tableau} is a filling of a (possibly skew) shape by the numbers $1, 2, \ldots, t$ (with $t$ boxes in all), such that the rows increase from left to right, and the columns increase from top to bottom. We write $f^\sigma$ for the number of standard Young tableaux of shape $\sigma$.
\begin{proposition}\cite{StanEC2} \label{prop:stanley-change-basis}
If $\mu \not\subseteq \lambda$ then $b_{\lambda \mu} = 0$. If $\mu \subseteq \lambda$, then
\[b_{\lambda/ \mu} = (-1)^{|\mu|} \frac{f^{\lambda/ \mu} f^\mu}{f^\lambda} \binom{|\lambda|}{|\mu|} \frac{d_\lambda(k)}{d_\mu(k)}.\]
An equivalent formulation is
\[b_{\lambda \mu} = (-1)^{|\mu|} \frac{f^{\lambda/\mu}}{|\lambda/\mu|!} \prod_{(i,j) \in \lambda/\mu}(k+j-i).\]
\end{proposition}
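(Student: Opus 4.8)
The plan is to recognize the change-of-basis map $s_\lambda(t) \mapsto s_\lambda(1-t)$ as a classical operation on symmetric functions and to extract the coefficients $b_{\lambda\mu}$ from known expansions. The substitution $t_i \mapsto 1 - t_i$ sends the power sums $p_r \mapsto \sum_i (1-t_i)^r$; more cleanly, it is the composite of the involution $\omega$ (or rather a signed variant) with a specialization, and the cleanest route is via the \emph{binomial} or \emph{Newton} expansion of Schur functions. Concretely, I would use the identity expressing $s_\lambda$ evaluated at a shifted alphabet: by the branching/coproduct formula for Schur functions, $s_\lambda(1-t) = \sum_{\mu \subseteq \lambda} s_{\lambda/\mu}(1)\, \tilde{s}_\mu(-t)$ where the two `halves' of the alphabet are the constant $1$'s and the $-t_i$'s. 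The specialization $s_{\lambda/\mu}(1^m)$ is a polynomial in $m$ whose value at $m=1$ picks out the single-row contributions, and the key point is that working over $k$ variables, evaluating the `$1$' part at the one-element alphabet $\{1\}$ forces $\lambda/\mu$ to be a horizontal strip — but for the full statement one instead keeps $\mu \subseteq \lambda$ arbitrary and tracks the precise constant.

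In more detail, the first formulation follows from three standard facts assembled in order. First, the coproduct identity $s_\lambda(x,y) = \sum_\mu s_\mu(x) s_{\lambda/\mu}(y)$, applied with $x$ the alphabet of all $1$'s (here a single variable, or rather the relevant specialization) and $y = -t = (-t_1,\dots,-t_k)$. Second, the principal specialization $s_{\lambda/\mu}(1^N) = \frac{f^{\lambda/\mu}}{|\lambda/\mu|!}\prod_{(i,j)\in\lambda/\mu}(N+j-i)$, which is exactly the content-product formula (a consequence of the hook-content formula, or of counting semistandard tableaux); taking $N = k$ here produces the factor $\prod_{(i,j)\in\lambda/\mu}(k+j-i)$ and the factor $f^{\lambda/\mu}/|\lambda/\mu|!$. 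Third, $s_\mu(-t_1,\dots,-t_k) = (-1)^{|\mu|} s_\mu(t_1,\dots,t_k)$, since $s_\mu$ is homogeneous of degree $|\mu|$. Combining these gives
\[
s_\lambda(1-t) = \sum_{\mu\subseteq\lambda} (-1)^{|\mu|}\frac{f^{\lambda/\mu}}{|\lambda/\mu|!}\Big(\prod_{(i,j)\in\lambda/\mu}(k+j-i)\Big) s_\mu(t),
\]
which is the second (equivalent) formula for $b_{\lambda\mu}$, and in particular $b_{\lambda\mu}=0$ unless $\mu\subseteq\lambda$. The first formula then follows by the purely combinatorial identity $f^{\lambda/\mu} f^\mu \binom{|\lambda|}{|\mu|} = f^\lambda \cdot (\text{something})$ — more precisely one rewrites $\frac{f^{\lambda/\mu}}{|\lambda/\mu|!} = \frac{f^{\lambda/\mu} f^\mu \binom{|\lambda|}{|\mu|}}{f^\lambda} \cdot \frac{f^\lambda}{f^\mu |\mu|! \binom{|\lambda|}{|\mu|}}$ and uses $\prod_{(i,j)\in\lambda/\mu}(k+j-i) = \frac{\prod_{(i,j)\in\lambda}(k+j-i)}{\prod_{(i,j)\in\mu}(k+j-i)} = \frac{d_\lambda(k)/H_\lambda}{d_\mu(k)/H_\mu}$ type relations, i.e. the content product over $\lambda/\mu$ factors as a ratio via the content formula $d_\lambda(k) = \prod_{(i,j)\in\lambda}\frac{k+j-i}{h(i,j)}$ together with $f^\lambda = |\lambda|!/H_\lambda$ where $H_\lambda$ is the hook-length product. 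Matching the hook-length factors against $f^{\lambda/\mu}, f^\mu, f^\lambda$ is a bookkeeping exercise.

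The main obstacle I anticipate is purely notational rather than conceptual: pinning down exactly which specialization ($\{1\}$ as a one-element alphabet versus $1^k$ versus the formal constant) makes the coproduct identity produce the content product $\prod(k+j-i)$ with the correct $k$, since naively $s_{\lambda/\mu}(1) = 0$ unless $\lambda/\mu$ is a single row. The resolution is that the substitution $t_i \mapsto 1 - t_i$ must be read as adjoining the \emph{full} complementary alphabet that makes $\prod_i (1) \cdot \prod_i(\text{rest})$ behave like $\prod_i \frac{1}{1-t_i}$-style generating functions — i.e. one should interpret the `$1$' as contributing a content-weighted specialization of size governed by $k$, which is precisely the statement that the transition matrix from $\{s_\lambda(t)\}$ to $\{s_\mu(1-t)\}$ is the $k$-specialized version. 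Once the correct specialization is identified, all three ingredients are classical (found in \cite{StanEC2}), and the two displayed formulas are equivalent by the hook-content manipulations above; I would cite Stanley for the result and include the coproduct-plus-specialization derivation as the proof.
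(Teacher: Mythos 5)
Your plan runs into two genuine problems, and they happen to cancel out to produce the correct final formula, which makes the argument look more convincing than it is.

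First, the branching rule $s_\lambda(X,Y) = \sum_\mu s_\mu(X)\,s_{\lambda/\mu}(Y)$ applies to a \emph{disjoint union} of alphabets: it computes $s_\lambda$ of the $m+n$ variables $x_1,\dots,x_m,y_1,\dots,y_n$. But the substitution $t_i \mapsto 1-t_i$ does not present $\{1-t_1,\dots,1-t_k\}$ as such a union; each $1-t_i$ is a \emph{single} variable. Applying the branching rule with $X = 1^k$ and $Y = (-t_1,\dots,-t_k)$ would instead compute $s_\lambda$ in $2k$ variables. The plethystic reading $s_\lambda[\mathbf{1}_k - T]$ is no better, since it is also not the polynomial substitution: already for $k=1$ and $\lambda = (n)$, plethysm gives $h_n[1 - t] = 1-t$, whereas $s_{(n)}(1-t) = (1-t)^n$. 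You flag this as a notational obstacle, but it is a conceptual one: the decomposition you invoke is simply not available. Second, the ``content-product formula'' $s_{\lambda/\mu}(1^N) = \frac{f^{\lambda/\mu}}{|\lambda/\mu|!}\prod_{(i,j)\in\lambda/\mu}(N+j-i)$ is the hook-content formula in the form valid for \emph{straight} shapes; it fails for skew shapes, because there is no hook-length formula for $f^{\lambda/\mu}$. Concretely, take $\lambda=(2,1)$, $\mu=(1)$, $N=2$: the skew shape is two disconnected boxes, so $s_{\lambda/\mu}(1,1)=4$, while $\frac{f^{\lambda/\mu}}{|\lambda/\mu|!}\prod(N+j-i) = \frac{2}{2}\cdot 3 = 3$. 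Your two errors offset (the correct $b_{(2,1),(1)}$ coefficient is indeed $(-1)\cdot 3$, not $(-1)\cdot 4$), but that is a coincidence of the wrong inputs, not a derivation.

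The equivalence of the two displayed formulas, which you reduce to the hook-content formula for \emph{straight} shapes together with $f^\lambda = |\lambda|!/H_\lambda$, is fine. For the main formula, the paper itself only cites \cite{StanEC2} and does not reprove the proposition; a clean derivation goes through the bialternant formula and Cauchy--Binet: expanding $\det\bigl((1-t_j)^{\lambda_i+k-i}\bigr)$ binomially and extracting $a_{\mu+\delta}(t)$ gives $b_{\lambda\mu} = (-1)^{|\mu|}\det\Bigl(\binom{\lambda_i+k-i}{\mu_j+k-j}\Bigr)$, after which the remaining work is the purely combinatorial identity equating this binomial determinant with $\frac{f^{\lambda/\mu}}{|\lambda/\mu|!}\prod_{(i,j)\in\lambda/\mu}(k+j-i)$. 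That determinant is emphatically not $s_{\lambda/\mu}(1^k)$, as the counterexample above shows, which is another sign that the branching-rule route cannot be made to work.
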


\begin{corollary}[Equivariant Herzog-K\"{u}hl equations] \label{cor:FLSHK}
Let $M$ be an equivariant $R$-module with equivariant Betti table $\beta_{i,\lambda}$. Assume $M$ is generated in positive degree.

The set-theoretic support of $M$ is contained in the rank-deficient locus if and only if:
\begin{equation} \label{eqn:equi-hk}
\text{ For each } \mu \subseteq \rect: \ \ \sum_{i,\lambda \supseteq \mu} (-1)^i \underbrace{\beta_{i,\lambda} d_\lambda(k)}_{(= \widetilde{\beta_{i,\lambda}})}  \frac{f^{\lambda/ \mu} f^\mu}{f^\lambda} \binom{|\lambda|}{|\mu|} = 0.
\end{equation}
Note that $\beta_{i,\lambda}$ is the {\bf multiplicity} of the $\lambda$-isotypic component of the resolution of $M$ (in cohomological degree $i$), whereas $\beta_{i,\lambda} d_\lambda(k) = \widetilde{\beta_{i,\lambda}}$ is the {\bf rank} of this isotypic component.
\end{corollary}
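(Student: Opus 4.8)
The plan is to deduce the corollary directly from Theorem~\ref{thm:basis-for-I} and the K-theoretic dictionary of Section~\ref{subsec:ktheory-bg}; granting that theorem, the remaining ingredients are largely formal. Recall that for $f = [M] = \sum_\lambda a_\lambda\, s_\lambda(t)$ with $a_\lambda = \sum_i (-1)^i \beta_{i,\lambda}(M)$, the discussion following Theorem~\ref{thm:basis-for-I} already shows that, provided $M$ is generated in positive degree so that $[M] \in K^{GL(V)}_+(\Hom(V,W))$, one has $[M] \in I'$ if and only if $\sum_\lambda a_\lambda b_{\lambda\mu} = 0$ for every $\mu \subseteq \rect$. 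So two things are needed: (1) identifying the condition ``$[M] \in I'$'' with ``the set-theoretic support of $M$ is contained in $X_{k-1}$''; and (2) substituting Stanley's formula for $b_{\lambda\mu}$ from Proposition~\ref{prop:stanley-change-basis} to turn the vanishing conditions into the stated equations~\eqref{eqn:equi-hk}. Step (2) is a one-line manipulation: using $b_{\lambda\mu} = (-1)^{|\mu|}\tfrac{f^{\lambda/\mu} f^\mu}{f^\lambda}\binom{|\lambda|}{|\mu|}\tfrac{d_\lambda(k)}{d_\mu(k)}$, multiply the relation $\sum_{\lambda \supseteq \mu} a_\lambda b_{\lambda\mu} = 0$ by $(-1)^{|\mu|} d_\mu(k)$, expand $a_\lambda$, and use $\beta_{i,\lambda}\, d_\lambda(k) = \widetilde{\beta_{i,\lambda}}$; the result is exactly~\eqref{eqn:equi-hk}.

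For step (1), I would use only the formal part of the excision sequence $K^{GL(V)}(X_{k-1}) \xrightarrow{i_*} K^{GL(V)}(X) \xrightarrow{j^*} K^{GL(V)}(U) \to 0$. One direction is immediate: if $\mathrm{Supp}(M) \subseteq X_{k-1}$ then $[M]$ lies, by definition, in the ideal $I$ of classes of modules supported along $X_{k-1}$, and if moreover $M$ is generated in positive degree then $[M] \in K^{GL(V)}_+(\Hom(V,W))$, so $[M] \in I'$. Conversely, suppose $[M] \in I'$. Then $[M] \in I \subseteq \ker(j^*)$ --- the inclusion holding because the restriction to $U$ of any module supported on $X_{k-1}$ is zero --- so $j^*[M] = 0$ in $K^{GL(V)}(U) \cong K(Gr(k,W))$. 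Under this isomorphism, $j^*[M]$ is the class $[\mathcal{M}]$, where $\mathcal{M}$ is the coherent sheaf on $Gr(k,W)$ associated to $M$, i.e.\ the descent of $\widetilde{M}|_U$ along the $GL(V)$-torsor $U \to Gr(k,W)$. Hence $[\mathcal{M}] = 0$ in $K_0(Gr(k,W))$.

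It remains to observe that a \emph{nonzero} coherent sheaf on the (smooth, projective) Grassmannian cannot have zero class in $K_0$: passing to the associated graded for the dimension filtration, the class of $\mathcal{M}$ maps to the effective cycle $\sum_i \ell_i\, [\overline{Z_i}]$, where the $Z_i$ are the top-dimensional components of $\mathrm{Supp}(\mathcal{M})$ and $\ell_i > 0$ is the length of $\mathcal{M}$ at the generic point of $Z_i$; this cycle is nonzero in the Chow group because effective cycles have nonnegative, not-all-zero coordinates in the Schubert basis. Therefore $[\mathcal{M}] = 0$ forces $\mathcal{M} = 0$, so $\widetilde{M}|_U = 0$ by faithfully flat descent, and hence $\mathrm{Supp}(M) \subseteq X \setminus U = X_{k-1}$. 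As a byproduct this reproves the equivalence of conditions (i)--(iii) in Theorem~\ref{thm:equi-hk}, since being annihilated by a power of $P_k$, having support in $X_{k-1}$, and having $\mathcal{M} = 0$ are interchangeable by elementary commutative algebra.

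The substantive content is thus entirely in Theorem~\ref{thm:basis-for-I}, whose proof is deferred; within the corollary itself the only non-formal ingredient is the implication ``$[\mathcal{M}] = 0 \Rightarrow \mathcal{M} = 0$'' on $Gr(k,W)$, and I expect no obstacle beyond using the positive-degree hypothesis consistently (to stay inside $K^{GL(V)}_+$) and carrying out the rescaling in step (2) correctly.
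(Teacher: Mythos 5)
Your proof is correct and follows the same K-theoretic route as the paper's: reduce to Theorem~\ref{thm:basis-for-I} via the change of basis, rescale by $(-1)^{|\mu|}d_\mu(k)$, and for the converse note that $[M]\in I'\subseteq\ker(j^*)$ forces the descended sheaf's class to vanish in $K_0(Gr(k,W))$, which in turn forces the sheaf to be zero. The paper's own proof dispatches this last implication by simply invoking projectivity of the Grassmannian, whereas you spell it out via the dimension filtration on $K_0$ and effectivity of the support cycle in the Schubert basis; this is an expansion of the same step, not a different argument.
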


\begin{proof}
($\Rightarrow$): The only thing to note is that, for simplicity, we have rescaled the $\mu$-indexed equation by $(-1)^{|\mu|} d_\mu(k)$.

($\Leftarrow$): If the equations are satisfied, then $M$ maps to the trivial K-theory class on $Gr(k,\mb{C}^n)$. Since the Grassmannian is projective, it follows that the sheaf associated to $M$ is zero. This implies the support restriction.
\end{proof}

The coefficient in Equation \eqref{eqn:equi-hk} has the following interpretation. Consider a uniformly-random filling $T$ of the shape $\lambda$ by the numbers $1, \ldots, |\lambda|$. Say that $T$ \newword{splits along} $\mu \sqcup \lambda/\mu$ if the numbers $1, \ldots, |\mu|$ lie in the subshape $\mu$. Then:
\begin{equation} \label{eqn:prob-interpretation}
\frac{f^{\lambda/\mu} f^\mu}{f^\lambda} \binom{|\lambda|}{|\mu|} = \frac{\mr{Prob}(T \text{ splits along } \mu \sqcup \lambda/\mu\  |\ T \text{ is standard})}{\mr{Prob}(T \text{ splits along } \mu \sqcup \lambda/\mu)}.
\end{equation}

\subsection{Proof of Theorem \ref{thm:basis-for-I}}

First, we recall the following fact about K-theory of Grassmannians:

\begin{proposition}[\cite{fink2012}, page 21]
The following identity holds of formal power series over $K(Gr(k,W))$:
\[\bigg(\sum_p \big[\bigwedge{}^p \mc{S}\big] u^p \bigg) \cdot \bigg(\sum_q \big[\bigwedge{}^q \mc{Q}\big] u^q \bigg)= (1+u)^n. \]
It is essentially a consequence of the tautological exact sequence of vector bundles
\[0 \to \mc{S} \to W \to \mc{Q} \to 0.\]
\end{proposition}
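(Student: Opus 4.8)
The plan is to recognize each of the two factors as the value of the \emph{total exterior power operation} and to use that this operation turns (K-theoretic) sums into products. For a vector bundle $\mc{E}$ on a scheme $Y$, write $\lambda_u(\mc{E}) := \sum_{p\ge 0}\big[\bigwedge{}^p\mc{E}\big]\,u^p \in 1 + u\,K(Y)[[u]]$ (a polynomial in $u$, since $\bigwedge^p\mc{E}=0$ for $p > \operatorname{rk}\mc{E}$). The structural fact I would invoke is that $\lambda_u$ descends to a group homomorphism
\[
\lambda_u : \big(K(Y),\,+\big) \longrightarrow \big(1 + u\,K(Y)[[u]],\,\times\big),
\]
so that $\lambda_u(\mc{E})=\lambda_u(\mc{E}')\,\lambda_u(\mc{E}'')$ for every short exact sequence $0\to\mc{E}'\to\mc{E}\to\mc{E}''\to 0$, and $\lambda_u(\mc{L}^{\oplus m})=\lambda_u(\mc{L})^m$.

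Granting this, the proposition follows in one line. The tautological sequence $0\to\mc{S}\to W\to\mc{Q}\to 0$ on $Gr(k,W)$, in which $W$ denotes the trivial bundle of rank $n$, gives $[\mc{S}]+[\mc{Q}]=[W]$ in $K(Gr(k,W))$. Applying $\lambda_u$ and using the homomorphism property on the left yields $\big(\sum_p[\bigwedge^p\mc{S}]u^p\big)\big(\sum_q[\bigwedge^q\mc{Q}]u^q\big)=\lambda_u(W)$. On the right, $\bigwedge^p W$ is trivial of rank $\binom np$, so $\lambda_u(W)=\sum_p\binom np u^p=(1+u)^n$; equivalently $[W]=n\,[\mc{O}_{Gr}]$ and $\lambda_u(\mc{O}_{Gr})=1+u$. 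Equating the two sides is exactly the claimed identity.

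I do not expect a genuine obstacle; the only point needing care is the assertion that $\lambda_u$ is well defined on $K$-theory and exponential, which is the classical theory of $\lambda$-operations but is worth spelling out. For a direct sum, the exterior-algebra decomposition $\bigwedge^p(\mc{E}'\oplus\mc{E}'')\cong\bigoplus_{a+b=p}\bigwedge^a\mc{E}'\otimes\bigwedge^b\mc{E}''$ gives multiplicativity on split sequences. For a general extension $0\to\mc{E}'\to\mc{E}\to\mc{E}''\to 0$, the subbundle $\bigwedge^j\mc{E}'\otimes\bigwedge^{p-j}\mc{E}\hookrightarrow\bigwedge^p\mc{E}$ defines a finite filtration of $\bigwedge^p\mc{E}$ with associated graded $\bigoplus_{a+b=p}\bigwedge^a\mc{E}'\otimes\bigwedge^b\mc{E}''$; since a $K$-theory class equals the sum of the classes of the graded pieces of any filtration, $[\bigwedge^p\mc{E}]=\sum_{a+b=p}[\bigwedge^a\mc{E}']\,[\bigwedge^b\mc{E}'']$, i.e. $\lambda_u(\mc{E})=\lambda_u(\mc{E}')\lambda_u(\mc{E}'')$. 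As $K(Y)$ is the group completion of vector bundles modulo $[\mc{E}]=[\mc{E}']+[\mc{E}'']$, these two observations show $\lambda_u$ factors through $K(Y)$ as a homomorphism to the multiplicative group. (One could instead phrase this via the splitting principle, pulling back to a flag bundle on which $\mc{S}$, $\mc{Q}$, and $W$ all acquire filtrations with line-bundle subquotients; this is the same filtration argument applied where everything splits, so it offers no real shortcut.)
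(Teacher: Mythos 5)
Your proof is correct and follows exactly the route the paper alludes to: the paper cites \cite{fink2012} and remarks only that the identity is ``essentially a consequence of the tautological exact sequence,'' and your argument fills in precisely that — applying the exponential property of $\lambda_u$ to $[\mc{S}]+[\mc{Q}]=[W]$ and evaluating $\lambda_u$ on the trivial bundle. The filtration argument you give for why $\lambda_u$ is well defined and multiplicative on extensions is the standard justification and is stated correctly.
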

\noindent We rearrange this as
\begin{align*}
\bigg(\sum_q \big[\bigwedge{}^q \mc{Q}\big] u^q \bigg)
&= (1+u)^n \cdot \frac{1}{\bigg(\sum_p \big[\bigwedge{}^p \mc{S}\big] u^p \bigg)} \\
&= (1+u)^n \cdot \sum_p \big (-1)^p [\Sym^p(\mc{S})\big] u^p.
\end{align*}
The key observation is that the left-hand side is a polynomial in $u$ of degree $n-k$, since $\mc{Q}$ has rank $n-k$. Thus, the coefficient $f_\ell$ of $u^\ell$ of the right-hand side vanishes for $\ell \geq n-k$. In other words, viewing $f_\ell$ as a symmetric polynomial, we have $f_\ell \in I'$ for $\ell > n-k$.

We compute the coefficient $f_\ell$. Recall that $[\Sym^p(\mc{S})] = h_p$, the $p$-th homogeneous symmetric polynomial. We have
\begin{align*}
\sum_\ell f_\ell u^\ell &= (1+u)^n \sum_p (-1)^p h_p u^p \\
&= \sum_{q=0}^n \sum_{p = 0}^\infty u^{p+q} (-1)^p h_p \binom{n}{q} \\
&= \sum_{\ell=0}^\infty u^\ell \sum_{p=\ell-n}^\ell (-1)^p h_p \binom{n}{\ell-p}, \\
\intertext{so our desired coefficients are}
f_\ell &= \sum_{p=\ell - n}^\ell (-1)^p h_p \binom{n}{\ell-p},
\end{align*}
where in the last two lines we use the convention $h_p = 0$ for $p < 0$. We next show:

\begin{lemma}
We have $I' \supseteq (h_{n-k+1}(1-t), \ldots, h_n(1-t))$.
\end{lemma}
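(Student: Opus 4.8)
The plan is to express each generator $h_m(1-t)$, for $n-k+1 \le m \le n$, as an integer linear combination of the classes $f_\ell$ with $\ell > n-k$. Since those classes all lie in $I'$ by the discussion above, and $I'$ is an ideal of $K^{GL(V)}_+(\Hom(V,W))$ (being the kernel of $j^*$ intersected with the subring of positive-degree classes), this forces the whole ideal they generate to lie in $I'$. Note that although there are infinitely many $f_\ell$ with $\ell > n-k$, only finitely many will be used.

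First I would assemble the full generating function of the $f_\ell$. From the formula derived above,
\[F(u) := \sum_{\ell \ge 0} f_\ell\, u^\ell = (1+u)^n \sum_{p \ge 0} (-1)^p h_p\, u^p = \frac{(1+u)^n}{\prod_{i=1}^k (1 + t_i u)}.\]
Now substitute $u = v/(1-v)$, using the identities $1+u = (1-v)^{-1}$ and $1 + t_i u = (1-v)^{-1}\bigl(1 - (1-t_i)v\bigr)$. A short computation gives
\[(1-v)^{n-k}\, F\!\left(\frac{v}{1-v}\right) = \frac{1}{\prod_{i=1}^k \bigl(1 - (1-t_i)v\bigr)} = \sum_{m \ge 0} h_m(1-t)\, v^m,\]
so the left-hand side is exactly the generating function of the substituted symmetric polynomials $h_m(1-t)$. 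On the other hand, expanding directly, $(1-v)^{n-k} F(v/(1-v)) = \sum_{\ell \ge 0} f_\ell\, v^\ell (1-v)^{n-k-\ell}$.

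Comparing the coefficient of $v^m$ on the two sides then yields
\[h_m(1-t) = \sum_{\ell = 0}^{m} (-1)^{m-\ell} \binom{n-k-\ell}{m-\ell}\, f_\ell,\]
with the binomial coefficient read in the generalized sense. The crucial observation is that for $\ell \le n-k$ the coefficient $\binom{n-k-\ell}{m-\ell}$ is an ordinary binomial coefficient, and it vanishes as soon as $m > n-k$; hence when $m$ lies in the range $n-k+1 \le m \le n$, only the terms with $n-k+1 \le \ell \le m$ survive, and each such $f_\ell$ belongs to $I'$. Therefore $h_m(1-t) \in I'$ for all $m$ in that range, which is the lemma. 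I do not expect a genuine obstacle here: the only delicate point is the bookkeeping with generalized binomial coefficients and confirming the vanishing in the claimed range — and one can even rewrite $(-1)^{m-\ell}\binom{n-k-\ell}{m-\ell} = \binom{m-n+k-1}{m-\ell}$ to see that the surviving coefficients are in fact nonnegative integers, though this refinement is not needed.
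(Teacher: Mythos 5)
Your argument is correct, and it is genuinely different from (and arguably slicker than) the paper's. The paper works in the substituted variables $t \mapsto 1-t$, expands $f_{n-k+i}(1-t)$ in the $h_s$-basis using Stanley's change-of-basis formula (Proposition \ref{prop:stanley-change-basis}), kills the terms $h_s$ with $s \le n-k$ via the finite-difference identity $\sum_{p=0}^{d}(-1)^p\binom{d}{p}g(p)=0$ for $\deg g < d$, and then concludes by induction on $i$ from the fact that the $h_{n-k+i}$-coefficient is $1$. You instead invert the relationship: by the substitution $u = v/(1-v)$ in the generating function
\[
F(u) = \sum_{\ell\ge 0} f_\ell u^\ell = \frac{(1+u)^n}{\prod_i (1+t_iu)},
\]
you obtain the clean identity $(1-v)^{n-k}F\!\bigl(v/(1-v)\bigr) = \sum_m h_m(1-t)\,v^m$, and hence the closed-form expression
\[
h_m(1-t) = \sum_{\ell=0}^{m}(-1)^{m-\ell}\binom{n-k-\ell}{m-\ell}f_\ell = \sum_{\ell=n-k+1}^{m}\binom{m-n+k-1}{m-\ell}f_\ell,
\]
where the second equality uses that the ordinary binomial coefficients with $0 \le \ell \le n-k$ vanish once $m > n-k$. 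Since $f_\ell \in I'$ for $\ell > n-k$ and $I'$ is an ideal of $K^{GL(V)}_+(\Hom(V,W))$, each $h_m(1-t)$ with $n-k+1 \le m \le n$ lies in $I'$, as required. Your approach avoids Stanley's formula and the finite-difference calculation entirely, replacing them with a single generating-function substitution; the tradeoff is that the paper's expansion in the $h_s$-basis is what it later reuses in the induction step showing the $h_{n-k+i}$-coefficient is $1$, so in the broader context of the section the two approaches are roughly the same amount of work, but in isolation yours is more direct.
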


\begin{proof}
Equivalently, we change basis $t \mapsto 1-t$, calling the (new) ideal $J$, and we show
\[J \supseteq (h_{n-k+1}, \ldots, h_n).\]
We consider the elements $f_{n-k+i}(1-t) \in J$ for $i = 1, \ldots, k$.
\begin{align*}
f_{n-k+i}(1-t) &= \sum_{p=-k+i}^{n-k+i} (-1)^p h_p(1-t) \binom{n}{n-k+i-p}.
\intertext{Since $i \leq k$ we have}
&= \sum_{p=0}^{n-k+i} (-1)^p h_p(1-t) \binom{n}{n-k+i-p}.
\end{align*}
We apply the second formula from Proposition \ref{prop:stanley-change-basis}. Note that all terms are single-row partitions, $\lambda = (p)$ and $\mu = (s)$, with $s\leq p$, so $f^{\lambda/\mu} = 1$ and the change of basis is:
\[b_{\lambda \mu} = (-1)^{s} \frac{f^{\lambda/\mu}}{|\lambda/\mu|!} \cdot (k+s) \cdots (k+p-1) = (-1)^s \binom{k+p-1}{k+s-1}.\]
Hence,
\begin{align*}
f_{n-k+i}(1-t) &= \sum_{p=0}^{n-k+i} \sum_{s=0}^p (-1)^{p+s} \binom{n}{n-k+i-p} \binom{k+p-1}{k+s-1} h_s. \\
&= \sum_{s=0}^{n-k+i} (-1)^s h_s \sum_{p=s}^{n-k+i} (-1)^p \binom{n}{n-k+i-p} \binom{k+p-1}{k+s-1}.
\intertext{We reindex, sending $p \mapsto n-k+i-p$, and reverse the order of the inner sum:}
&= (-1)^{n-k+i} \sum_{s=0}^{n-k+i} (-1)^s h_s \sum_{p=0}^{n-k+i-s} (-1)^p \binom{n}{p} \binom{n+i- p-1}{k+s-1}.
\end{align*}
{\bf The terms $h_s$ for $s \leq n-k$.}
First, we show that all the lower terms $h_s$, with $s \leq n-k$, vanish. For these terms, we view the large binomial coefficient as a polynomial function of $p$. It has degree $k+s-1$, with zeroes at $p = (n-k+i-s)+1, \ldots, n+i-1$, so we may freely include these terms in the inner sum. It is convenient to extend the inner sum only as far as $p=n$, obtaining
\[\sum_{p=0}^{n} (-1)^p \binom{n}{p} \binom{n-i-p-1}{k+s-1}.\]
Recall from the theory of finite differences that
\[\sum_{p=0}^d (-1)^p \binom{d}{p} g(p) = 0\] whenever $g$ is a polynomial of degree $< d$. Since the above sum has degree $k+s-1 \leq n-1$, it vanishes. Thus, dropping the lower terms, we are left with
\begin{align} \label{eqn:leftover-finite-diff}
f_{n-k+i}(1-t) &= (-1)^i \sum_{s=1}^i (-1)^s h_{n-k+s} \sum_{p=0}^{i-s} (-1)^p \binom{n}{p} \binom{n+i- p-1}{n+s-1}.
\end{align}
{\bf Showing $h_{n-k+i} \in J$ for $i=1, \ldots, k$.}
From equation \eqref{eqn:leftover-finite-diff}, we see directly that the coefficient of $h_{n-k+i}$ in $f_{n-k+i}(1-t)$ is 1. This is the leading coefficient, so the claim follows by induction on $i$.
\end{proof}

\begin{corollary}
We have
\[J = (h_i : i > n-k) = \mathrm{span}_\mb{C} \big\{s_\lambda : \lambda \not\subseteq \rect \big\}.\]
\end{corollary}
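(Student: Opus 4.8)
The plan is to prove the two displayed equalities by bootstrapping off the Lemma together with the fact, established at the start of Section~\ref{subsec:FLSHK}, that $J$ has co-rank $\binom{n}{k}$. Throughout, write $\mathfrak{a} = (h_i : i > n-k)$ and work in (the complexification of) $K^{GL(V)}_+(\Hom(V,W))$, which has $\mb{C}$-basis $\{s_\lambda : \ell(\lambda) \le k\}$.

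First I would upgrade the Lemma's conclusion $J \supseteq (h_{n-k+1}, \ldots, h_n)$ to $J \supseteq \mathfrak{a}$. Since $e_j = 0$ in $k$ variables for $j > k$, the identity $\big(\sum_{j\ge 0} (-1)^j e_j u^j\big)\big(\sum_{m\ge 0} h_m u^m\big) = 1$ gives $h_m = \sum_{j=1}^{k} (-1)^{j-1} e_j\, h_{m-j}$ for all $m \ge k$ (with $h_{<0}=0$). An upward induction on $m$ then shows $h_m \in (h_{n-k+1}, \ldots, h_n)$ for every $m \ge n-k+1$, so $(h_{n-k+1},\ldots,h_n) = \mathfrak{a}$ and hence $J \supseteq \mathfrak{a}$.

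Second, I would show $\mathfrak{a} \supseteq \operatorname{span}_{\mb{C}}\{ s_\lambda : \lambda \not\subseteq \rect\}$. If $\ell(\lambda) > k$ then $s_\lambda = 0$, so suppose $\ell(\lambda) \le k$; since $\lambda \not\subseteq \rect = (n-k)^k$ this forces $\lambda_1 \ge n-k+1$. Apply Jacobi--Trudi with a determinant of size exactly $\ell = \ell(\lambda)$: $s_\lambda = \det\big(h_{\lambda_i - i + j}\big)_{1 \le i, j \le \ell}$. Every entry of the first row is $h_{\lambda_1 - 1 + j}$ with $\lambda_1 - 1 + j \ge \lambda_1 > n-k$, hence lies in $\mathfrak{a}$; expanding the determinant along the first row exhibits $s_\lambda \in \mathfrak{a}$.

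Finally, a dimension count closes the argument and simultaneously pins down both equalities. There are exactly $\binom{n}{k}$ partitions fitting inside the $k \times (n-k)$ rectangle, so the second step gives $\operatorname{corank}(\mathfrak{a}) \le \binom{n}{k}$. On the other hand, $J$ is the image of $I'$ under the substitution $t_i \mapsto 1-t_i$, which is an involutive $\mb{C}$-linear automorphism of the symmetric polynomial ring, so $\operatorname{corank}(J) = \operatorname{corank}(I') = \binom{n}{k}$. Since $\mathfrak{a} \subseteq J$, we get $\binom{n}{k} \ge \operatorname{corank}(\mathfrak{a}) \ge \operatorname{corank}(J) = \binom{n}{k}$, forcing equality throughout; hence $\operatorname{span}_{\mb{C}}\{s_\lambda : \lambda \not\subseteq \rect\} = \mathfrak{a} = J$, which is the Corollary --- and, transporting back along $t_i \mapsto 1-t_i$, also Theorem~\ref{thm:basis-for-I}. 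I do not expect a real obstacle here; the content is in the Lemma, and the only points needing care are purely bookkeeping: that $t_i \mapsto 1-t_i$ genuinely is a linear automorphism (so it preserves co-rank), and that Jacobi--Trudi is applied at size $\ell(\lambda)$ so that a non-vanishing Schur polynomial actually appears in the expansion. (If one preferred not to invoke $\operatorname{corank}(I') = \binom{n}{k}$, the alternative is to check directly that $s_\lambda \not\equiv 0 \bmod \mathfrak{a}$ for $\lambda \subseteq \rect$, i.e. to recover the classical Schubert-basis description of $\Lambda_k/\mathfrak{a} \cong H^*(Gr(k,\mb{C}^n))$; but leaning on the co-rank count already in hand is shorter.)
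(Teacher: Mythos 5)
Your proof is correct and follows essentially the same route as the paper's: deduce $(h_{n-k+1},\ldots,h_n) = (h_i : i > n-k)$ by induction from the $e$--$h$ relation, exhibit the span of $\{s_\lambda : \lambda \not\subseteq \rect\}$ inside this ideal via Jacobi--Trudi, and then squeeze with the corank count $\operatorname{corank}(J) = \binom{n}{k}$. The only cosmetic differences are that you make the inductive identity explicit via $E(-u)H(u)=1$ (where the paper cites ``Newton's identities''), and you let the dimension count absorb the reverse inclusion $\mathfrak{a} \subseteq \operatorname{span}\{s_\lambda : \lambda \not\subseteq \rect\}$ rather than proving it directly via Pieri as the paper does.
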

\begin{proof}
The equality of ideals
\[(h_{n-k+1}, \ldots, h_n) = (h_i : i > n-k)\]
follows from Newton's identities and induction. The equality
\[(h_i : i > n-k) = \mathrm{span}_\mb{C} \big\{ s_\lambda : \lambda \not\subseteq \rect\big\}\]
follows from the Pieri rule (for $\subseteq$) and the Jacobi-Trudi formula (for $\supseteq$). See \cite{Fulton} for these identities. This shows $J$ \emph{contains} this linear span. But then quotienting by $J$ leaves at most $\binom{n}{k}$ classes. This is already the rank of $K(Gr(k,W))$, so we must have equality.
\end{proof}

Changing bases $t \mapsto 1-t$ a final time completes the proof of Theorem \ref{thm:basis-for-I}.

\section{Square matrices and perfect matchings}
\label{sec:square-mats}

\begin{remark}
In this section, rank Betti tables play a more significant role than multiplicity tables. As such, we will state results in terms of the cones $\widetilde{BS}_{k,k}$ and $\ercone{k,k}$.
\end{remark}

We now describe the Boij-S\"{o}derberg cone in the base case of square matrices; thus we set $n=k$ for the remainder of this section. The corresponding Grassmannian is a point, so there is no dual geometric picture or cone. We will recall the description of $\rcone{k,k}$ due to \cite{FLS16}; we then describe the derived cone $\ercone{k,k}$.

When $k=1$, the ring is just $\mb{C}[t]$, and its torsion graded modules are essentially trivial to describe. See Section 4 of \cite{EE2012} for a short, complete description of both cones. For $k > 1$, however, the cones are algebraically and combinatorially interesting, although simpler than the general case.

\subsection{Prior work on \texorpdfstring{$\rcone{k,k}$ (\cite{FLS16})}{rank Betti tables for square matrices}} \label{subsec:squaremat-basic-cone}

The rank-deficient locus $\{\det(T) = 0\} \subset \Hom(V,W)$ is codimension 1. Thus, modules satisfying Condition \ref{cond:modules-of-interest} have free resolutions of length 1,
\[M \leftarrow F^0 \leftarrow F^1 \leftarrow 0.\]
There is only one equivariant Herzog-K\"{u}hl equation, labeled by the empty partition $\mu = \eset$:
\begin{equation} \label{eqn:rank-equation}
\sum_\lambda \widetilde{\beta_{0,\lambda}} = \sum_\lambda \widetilde{\beta_{1,\lambda}},\ \text{ that is, }\ \rank(F^0) = \rank(F^1).
\end{equation}
Algebraically, this simply says that $M$ is a torsion module.\\

The extremal rays and supporting hyperplanes of $\rcone{k,k}$ are as follows.
\begin{definition}[Pure tables] \label{def:pure-tables}
Fix $\lambda, \mu \in \mb{Y}_{\pm}$ with $\lambda \subsetneq \mu$. The \newword{pure table} $\widetilde{\beta}(\lambda \subsetneq \mu)$ is defined by setting
\[\widetilde{\beta_{0,\lambda}} = \widetilde{\beta_{1,\mu}} = 1\]
and all other entries 0.
\end{definition}
It is nontrivial to show that each pure table $\widetilde{\beta}(\lambda \subsetneq \mu)$ is realizable up to scalar multiple (\cite[Theorem 4.1]{FLS16}). 
Any such table generates an extremal ray of $\rcone{k,k}$.

It is, by contrast, easy to establish the following inequalities on $\rcone{k,k}$.
\begin{definition}[Antichain inequalities] \label{def:antichain-ineqs}
Let $S \subseteq \mb{Y}_{\pm}$ be a downwards-closed set. Let
\[\Gamma = \{\lambda : \lambda \subsetneq \mu \text{ for some } \mu \in S\}.\]
For any rank Betti table $(\widetilde{\beta_{i,\lambda}})$, the \newword{antichain inequality (for $S$)}  is then:
\begin{equation} \label{ineq:antichains}
\sum_{\lambda \in \Gamma} \widetilde{\beta_{0,\lambda}} \geq \sum_{\lambda \in S} \widetilde{\beta_{1,\lambda}}.
\end{equation}
(The terminology of `antichains' is due to \cite{FLS16}, where the inequality \eqref{ineq:antichains} is stated in terms of the maximal elements of $S$, which form an antichain in $\eylat$.)
\end{definition}
\noindent These conditions follow directly from minimality of the underlying maps of modules: the summands corresponding to $S$ in $F_1$ must map into those corresponding to $\Gamma$ in $F_0$.

Finally, we recall the graph-theoretic model of $\widetilde{\beta}$ introduced in Section \ref{subsubsec:base-case}. This construction was implicit in \cite[Lemma 3.6]{FLS16}. It is especially simple in this case:
\begin{definition} \label{def:bipartite-betti-graph}
The \newword{Betti graph} $G(\widetilde{\beta})$ is the directed bipartite graph with left vertices $L$ and right vertices $R$, defined as follows:
\begin{itemize}
\item The set $L$ (resp. $R$) contains $\widetilde{\beta_{0,\lambda}}$ (resp. $\widetilde{\beta_{1,\lambda}}$) vertices labeled $\lambda$, for each $\lambda$,
\item The edge set contains all possible edges $\lambda \leftarrow \mu$, from $R$ to $L$, for $\lambda \subsetneq \mu$.
\end{itemize}
\end{definition}

The Boij-S\"{o}derberg cone $\rcone{k,k}$ is characterized as follows.

\begin{theorem}[{\cite[Theorem 3.8]{FLS16}}] \label{thm:square-matrices-cone}
The cone $\rcone{k,k}$ is defined by the rank equation \eqref{eqn:rank-equation}, the conditions $\widetilde{\beta_{i,\lambda}} \geq 0$, and the antichain inequalities \eqref{ineq:antichains}. Its extremal rays are the pure tables $\widetilde{\beta}(\lambda \subsetneq \mu)$, for all choices of $\lambda \subsetneq \mu$ in $\mb{Y}_{\pm}$. 

Moreover, if $\widetilde{\beta} \in \widetilde{\mb{B}}_{k,k}$ has nonnegative integer entries, then $\widetilde{\beta} \in \rcone{k,k}$ if and only if the Betti graph $G(\widetilde{\beta})$ has a perfect matching.
\end{theorem}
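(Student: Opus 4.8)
The plan is to prove the three assertions together, using Hall's Matching Theorem as the bridge between the polyhedral description and the graph-theoretic one, with the realizability of pure tables (\cite[Theorem 4.1]{FLS16}) as the only genuinely nontrivial input. Since all the conditions in play are homogeneous with rational coefficients, and any table of finite support is constrained by only finitely many antichain inequalities, it suffices to show: (a) every realizable rank Betti table satisfies \eqref{eqn:rank-equation}, nonnegativity, and all the antichain inequalities \eqref{ineq:antichains}; (b) a nonnegative \emph{integer} table satisfying these conditions is a positive scalar multiple of a realizable one; and (c) the extremal rays are exactly the pure tables $\widetilde{\beta}(\lambda\subsetneq\mu)$. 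Combining (a) and (b) and clearing denominators identifies $\rcone{k,k}$ with the stated polyhedron and, via the Hall translation below, with the set of tables whose Betti graph admits a perfect matching.

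For (a): nonnegativity is immediate, and \eqref{eqn:rank-equation} holds because the resolved module $M$ is torsion (annihilated by a power of $(\det)$), so $\mathrm{rank}\,F^0 = \mathrm{rank}\,F^1$. For an antichain inequality, fix a downward-closed $S\subseteq\eylat$ with associated set $\Gamma$ and a minimal equivariant resolution $0\to F^1\to F^0\to M\to 0$. The component of the differential from the $\mb{S}_\mu(V)$-summand of $F^1$ to the $\mb{S}_\lambda(V)$-summand of $F^0$ lies in $\Hom_{GL(V)}(\mb{S}_\mu(V),\mb{S}_\lambda(V)\otimes R)$, which by the Cauchy identity and the Littlewood--Richardson rule vanishes unless $\lambda\subseteq\mu$; and when $\lambda=\mu$ it consists only of scalar (hence unit) maps, excluded by minimality. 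Therefore the free submodule of $F^1$ spanned by the $\mu$-summands with $\mu\in S$ maps---injectively, being a restriction of $F^1\hookrightarrow F^0$---into the free submodule of $F^0$ spanned by the $\lambda$-summands with $\lambda\in\Gamma$. Comparing ranks gives \eqref{ineq:antichains}.

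The Hall translation runs as follows. The Betti graph $G(\widetilde{\beta})$ of Definition \ref{def:bipartite-betti-graph} is bipartite with $|L|=\sum_\lambda\widetilde{\beta_{0,\lambda}}$ and $|R|=\sum_\lambda\widetilde{\beta_{1,\lambda}}$, and it has a perfect matching iff $|L|=|R|$ and Hall's condition holds for every $T\subseteq R$. Letting $S$ be the set of labels occurring in $T$, the neighborhood $N(T)$ is exactly the set of $L$-vertices labeled by elements of $\Gamma=\{\lambda:\lambda\subsetneq\mu\text{ for some }\mu\in S\}$; enlarging $S$ to its downward closure leaves $\Gamma$ unchanged and can only increase $\sum_{\mu\in S}\widetilde{\beta_{1,\mu}}$. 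Hence Hall's condition, together with $|L|=|R|$, is equivalent to the rank equation plus all antichain inequalities. Moreover a perfect matching on $G(\widetilde{\beta})$ records a decomposition $\widetilde{\beta}=\sum_e n_e\,\widetilde{\beta}(\lambda_e\subsetneq\mu_e)$ into pure tables, with $n_e\in\mb{Z}_{>0}$, one summand per matched edge.

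For (b): given such a matching decomposition, \cite[Theorem 4.1]{FLS16} supplies, for each $e$, an integer $c_e>0$ and a module $M_e$ satisfying Condition \ref{cond:modules-of-interest} with rank Betti table $c_e\,\widetilde{\beta}(\lambda_e\subsetneq\mu_e)$. With $N=\prod_e c_e$, the direct sum $\bigoplus_e M_e^{\oplus(Nn_e/c_e)}$ again satisfies Condition \ref{cond:modules-of-interest} and has rank Betti table $N\widetilde{\beta}$, so $\widetilde{\beta}\in\rcone{k,k}$. Finally, for (c): each $\widetilde{\beta}(\lambda\subsetneq\mu)$ belongs to the cone by (b), and it spans an extremal ray because any table in $\rcone{k,k}$ supported on $\{(0,\lambda),(1,\mu)\}$ has entry $a$ at $(0,\lambda)$ and $b$ at $(1,\mu)$ with $a=b$ forced by \eqref{eqn:rank-equation}, so it is proportional to $\widetilde{\beta}(\lambda\subsetneq\mu)$; conversely $\rcone{k,k}$ is the positive span of realizable tables, each of which has integer entries and, by (a) and the Hall translation, a perfect matching, hence decomposes into pure tables. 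I expect the real difficulty to be concentrated entirely in \cite[Theorem 4.1]{FLS16}: granting it, the representation-theoretic vanishing in (a), the Hall bookkeeping, and the direct-sum construction in (b) are all routine. The only other point worth a remark is the reduction of the a priori infinite family of antichain inequalities to a finite polyhedral description on any fixed finite-support table, which holds because only inequalities whose index set meets the support of $\widetilde{\beta}$ can be active.
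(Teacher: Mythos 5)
Your proposal is correct and follows the same route the paper sketches (and that is carried out in \cite{FLS16}): realizable tables satisfy the antichain inequalities via minimality together with the Cauchy identity and Littlewood--Richardson restriction, Hall's Matching Theorem identifies the resulting polyhedron with the positive span of pure tables, and the one genuinely nontrivial input is the realizability of pure tables from \cite[Theorem 4.1]{FLS16}. The extra details you supply---the injectivity of $F^1\hookrightarrow F^0$ for the rank comparison, the reduction of Hall's condition to downward-closed label sets $S$, the extremality argument via support and the rank equation, and the passage from integer tables to the rational cone---are precisely the bookkeeping the paper's sketch leaves implicit.
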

A perfect matching on $G(\widetilde{\beta})$ expresses $\widetilde{\beta}$ as a positive integer sum of pure tables: an edge $\lambda \leftarrow \mu$ corresponds to a summand 
\[\widetilde{\beta} = \cdots + \widetilde{\beta}(\lambda \subsetneq \mu) + \cdots.\]
It is easy to see that the cone spanned by the pure tables is contained in the cone defined by the antichain inequalities. The fact that these cones agree follows from Hall's Matching Theorem for bipartite graphs:

\begin{theorem}[Hall's Matching Theorem]
Let $G$ be a bipartite graph with left vertices $L$ and right vertices $R$, with $|L| = |R|$. Then $G$ has a perfect matching if and only if the following holds for all subsets $S \subseteq R$ (equivalently, for all subsets $S \subseteq L$): let $\Gamma(S)$ be the set of vertices adjacent to $S$. Then $|\Gamma(S)| \geq |S|$.
\end{theorem}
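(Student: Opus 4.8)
The plan is to prove Hall's Matching Theorem by induction on $m := |L| = |R|$, after disposing of the trivial implication. For the ``only if'' direction: given a perfect matching, sending each $r \in S$ to its matched partner defines an injection $S \hookrightarrow \Gamma(S)$, so $|\Gamma(S)| \geq |S|$. (The parenthetical ``equivalently, for all $S \subseteq L$'' then follows by symmetry, since a perfect matching of $G$ is also one of the graph with $L$ and $R$ swapped.)

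For the ``if'' direction, assume $|\Gamma(S)| \geq |S|$ for every $S \subseteq R$. The base case $m = 1$ is immediate: applying the hypothesis to $S = R$ shows the unique vertex of $R$ has a neighbor. For the inductive step I would split into two cases according to whether some proper nonempty subset of $R$ is \emph{tight}, i.e. has $|\Gamma(S)| = |S|$.

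In the first case, every nonempty $S \subsetneq R$ satisfies the strict inequality $|\Gamma(S)| \geq |S| + 1$. Here one picks any $r \in R$, any neighbor $\ell \in \Gamma(\{r\})$, adds the edge $r \leftarrow \ell$ to the matching, and deletes $r$ and $\ell$; in the resulting balanced bipartite graph $G'$, any $S \subseteq R \setminus \{r\}$ has $|\Gamma_{G'}(S)| \geq |\Gamma_G(S)| - 1 \geq |S|$, so Hall's condition persists and the inductive hypothesis completes the matching. In the second case, fix a tight nonempty $S \subsetneq R$. The induced subgraph on $S$ and $\Gamma(S)$ inherits Hall's condition from $G$ and is balanced, so by induction it has a perfect matching. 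For the complementary subgraph on $R \setminus S$ and $L \setminus \Gamma(S)$, the point is that for any $T \subseteq R \setminus S$ the number of neighbors of $T$ outside $\Gamma(S)$ equals $|\Gamma_G(S \cup T)| - |\Gamma_G(S)| \geq |S \cup T| - |S| = |T|$; since moreover $|R \setminus S| = m - |S| = m - |\Gamma(S)| = |L \setminus \Gamma(S)|$, this subgraph is balanced and satisfies Hall's condition, so induction again applies. The union of the two matchings is a perfect matching of $G$.

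The only real obstacle is the bookkeeping in the second case: one must check that Hall's condition descends both to the ``inside'' subgraph on $(S, \Gamma(S))$ and to the ``outside'' subgraph, the latter hinging on the identity $\Gamma_G(S \cup T) \setminus \Gamma_G(S) = \Gamma_G(T) \setminus \Gamma_G(S)$, which guarantees no neighbors are lost when passing to the complement. Everything else is routine. An alternative, which I would mention only in passing, is to deduce the statement from K\"onig's theorem or from the max-flow/min-cut theorem applied to the obvious unit-capacity network with source feeding $L$ and $R$ feeding a sink; but the inductive argument above is self-contained and matches the level of the surrounding exposition.
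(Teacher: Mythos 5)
The paper does not prove this theorem: Hall's Matching Theorem is stated there as a classical result from graph theory and used without proof, so there is no paper argument to compare against. Your proof is correct; it is the standard inductive argument (essentially the Halmos--Vaughan proof), with the case split on whether some nonempty proper tight set $S$ exists, the strict-inequality case handled by deleting a matched pair, and the tight case handled by splitting into the induced subgraph on $(S,\Gamma(S))$ and its complement. The bookkeeping you flag as the main point --- that $|\Gamma_G(S\cup T)| - |\Gamma_G(S)| = |\Gamma_G(T)\setminus\Gamma_G(S)|$ gives Hall's condition on the complementary subgraph, and that tightness of $S$ makes the complement balanced --- is carried out correctly. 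This is entirely appropriate as a self-contained substitute for the paper's citation.
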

In the antichain inequality \eqref{ineq:antichains}, $S$ corresponds to a set of vertex labels on the right-hand-side of the Betti graph $G(\widetilde{\beta})$. The set $\Gamma$ consists of the labels of vertices adjacent to $S$. The numbers of such vertices are then the right- and left-hand-sides of the inequality. (The structure of $G(\widetilde{\beta})$ implies easily that it suffices to consider inequalities from downwards-closed sets $S$.)

\subsection{The derived cone}
\label{subsec:derived-cone}

We now generalize Theorem \ref{thm:square-matrices-cone} to describe the derived cone $\ercone{k,k}$. We are interested in bounded free equivariant complexes
\[\cdots \leftarrow F_i \leftarrow F_{i+1} \leftarrow F_{i+2} \leftarrow \cdots,\]
all of whose homology modules are torsion.

The supporting hyperplanes of $\ercone{k,k}$ are quite complicated and we do not establish them directly. We instead generalize the descriptions in terms of extremal rays and perfect matchings, which remain fairly simple. We then deduce the inequalities from Hall's Theorem.

The extremal rays of $\ercone{k,k}$ will be homological shifts of those of $\rcone{k,k}$:
\begin{definition}[Homologically-shifted pure tables]\label{def:hom-shifted-pure-tables}
Fix $i \in \mb{Z}$ and $\lambda, \mu \in \mb{Y}_{\pm}$ with $\lambda \subsetneq \mu$. We define the \newword{homologically-shifted pure table} $\shiftedpuretable{i}{\lambda}{\mu}$ by setting
\[\widetilde{\beta_{i,\lambda}} = \widetilde{\beta_{i+1,\mu}} = 1\]
and all other entries 0.
\end{definition}

\noindent The supporting hyperplanes will be defined by the following inequalities. Recall that a \newword{convex subset} $S$ of a poset $P$ is the intersection of an upwards-closed set with a downwards-closed set.
\begin{definition}[Convexity inequalities] \label{def:segment-ineqs}
For each odd $i$, let $S_i \subseteq \mb{Y}_{\pm}$ be any convex set. For each even $i$, define
\[\Gamma_i = \{\lambda : \mu \subsetneq \lambda \text{ for some } \mu \in S_{i-1}\} \cup \{\lambda : \lambda \subsetneq \mu \text{ for some } \mu \in S_{i+1}\}.\]
For any rank Betti table $(\widetilde{\beta_{i,\lambda}})$, the \newword{convexity inequality (for the $S_i$'s)}  is then:
\begin{equation} \label{ineq:segments}
\sum_{i \text{ even}} \sum_{\lambda \in \Gamma_i} \widetilde{\beta_{i,\lambda}} \geq \sum_{i \text{ odd}} \sum_{\lambda \in S_i} \widetilde{\beta_{i,\lambda}}.
\end{equation}
(We may, if we wish, switch `even' and `odd' in this definition. We will see that either collection of inequalities yields the same cone.)
\end{definition}

We recall the general definition of the Betti graph:
\begin{definition}[Betti graphs for complexes]
Let $\widetilde{\beta} \in \widetilde{\mb{B}}_{k,k}$ have nonnegative integer entries. The \newword{Betti graph} $G(\widetilde{\beta})$ is defined as follows:
\begin{itemize}
\item The vertex set contains $\widetilde{\beta_{i,\lambda}}$ vertices labeled $(i,\lambda)$, for each $(i,\lambda)$,
\item The edge set contains, for each $i$, all possible edges $(i,\lambda) \leftarrow (i+1,\mu)$ with $\lambda \subsetneq \mu$.
\end{itemize}
Note that this graph is bipartite: every edge connects an even-indexed and an odd-indexed vertex.
\end{definition}

Each segment $S_i$ of Definition \ref{def:segment-ineqs} corresponds to a set of vertex labels in $G(\widetilde{\beta})$. The set $\Gamma_i$ then contains the labels of vertices adjacent to $S_{i-1}$ and $S_{i+1}$. The numbers of vertices counted this way give the right- and left-hand-sides of the inequality \eqref{ineq:segments}. Note that if the $S_i$'s were not convex, we could replace them by their convex hulls without changing the $\Gamma_i$'s. \\

We now characterize the derived Boij-S\"{o}derberg cone $\ercone{k,k}$.

\begin{theorem}[The derived Boij-S\"{o}derberg cone, for square matrices] \label{thm:derived-cone}
Let $\widetilde{\beta}$ be an abstract rank Betti table. Without loss of generality, assume the entries of $\widetilde{\beta}$ are nonnegative integers. The following are equivalent:
\begin{itemize}
\item[(i)] $\widetilde{\beta} \in \ercone{k,k}$;
\item[(ii)] $\widetilde{\beta}$ satisfies all the convexity inequalities, together with the rank condition 
\[\sum_{i,\lambda} (-1)^i \widetilde{\beta_{i,\lambda}} = 0;\]
\item[(iii)] $\widetilde{\beta}$ is a positive integral linear combination of homologically shifted pure tables;
\item[(iv)] The Betti graph $G(\widetilde{\beta})$ has a perfect matching.
\end{itemize}
\end{theorem}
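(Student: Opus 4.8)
The plan is to prove the cycle of implications $(iii) \Leftrightarrow (iv) \Rightarrow (ii) \Rightarrow (i) \Rightarrow (iv)$, with the last step being the substantive one. The equivalence $(iii) \Leftrightarrow (iv)$ is essentially a restatement: a perfect matching on $G(\widetilde{\beta})$ chooses, for each vertex $(i,\lambda)$, a partner $(i\pm 1, \mu)$ with the containment going the right way, and grouping the matched edges records exactly a decomposition $\widetilde{\beta} = \sum \shiftedpuretable{i}{\lambda}{\mu}$ with nonnegative integer multiplicities; conversely such a decomposition assembles into a matching. The implication $(iii) \Rightarrow (ii)$ is a direct check: each $\shiftedpuretable{i}{\lambda}{\mu}$ satisfies the rank condition (it contributes $(-1)^i + (-1)^{i+1} = 0$), and it satisfies every convexity inequality because the single right-hand-side vertex $(i,\lambda)$ or $(i+1,\mu)$ lying in some $S_i$ forces its partner into the corresponding $\Gamma$-set (this uses convexity of the $S_i$ exactly as in the bipartite case); summing over pure summands preserves both. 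The implication $(ii) \Rightarrow (iv)$ is the combinatorial heart and follows from Hall's Matching Theorem applied to the bipartite graph $G(\widetilde{\beta})$: one shows that the rank condition makes the two sides of $G(\widetilde{\beta})$ have equal total size, and that the convexity inequalities are precisely the Hall conditions $|\Gamma(S)| \geq |S|$ ranging over subsets $S$ of the odd-indexed vertices — here one argues, as the paper notes for $\rcone{k,k}$, that it suffices to test $S$ that are convex (replacing $S$ by its "convex hull" in each homological degree does not shrink $\Gamma(S)$), so the convexity inequalities are not merely necessary but suffice.

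The one genuinely new implication is $(i) \Rightarrow (iv)$: given a bounded minimal equivariant free complex $F_\bullet$ with all homology torsion, we must exhibit a perfect matching on its Betti graph. I would prove this by induction on the total rank $\sum_{i,\lambda} \widetilde{\beta_{i,\lambda}}$, using homological algebra to peel off one pure summand at a time, mimicking the proof of Theorem \ref{thm:square-matrices-cone} in \cite{FLS16}. Concretely: among all $\lambda$ appearing in some $F_i$, pick a maximal one, say in homological degree $i_0$, i.e. $\widetilde{\beta_{i_0,\lambda}} \neq 0$ and no $\mu \supsetneq \lambda$ appears anywhere. Minimality of the complex forces the differential $F_{i_0} \to F_{i_0 - 1}$ to kill the $\mb{S}_\lambda(V)$-isotypic generators on the nose (there is nothing strictly larger for them to map to, and nothing equal either, by minimality), so the summand $\mb{S}_\lambda(V) \otimes R \subseteq F_{i_0}$ is in the kernel; since the homology at $F_{i_0}$ is torsion, this summand must be hit by the incoming differential $F_{i_0+1} \to F_{i_0}$, forcing some $\mb{S}_\mu(V)$ with $\mu \supsetneq \lambda$ to appear in $F_{i_0+1}$. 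We split off a subcomplex $\bigl(\cdots \leftarrow 0 \leftarrow \mb{S}_\lambda(V)\otimes R \leftarrow \mb{S}_\mu(V)\otimes R \leftarrow 0 \leftarrow \cdots\bigr)$, whose Betti table is $\shiftedpuretable{i_0}{\lambda}{\mu}$, note that the complementary complex is still bounded, minimal, and has torsion homology (a mapping-cone / direct-summand argument: the split is compatible with the differentials because of the maximality of $\lambda$), and apply the inductive hypothesis. Reassembling the pure summands yields the matching. The direction $(iv) \Rightarrow (i)$, needed to close the loop and make the cones literally coincide, follows because each $\shiftedpuretable{i}{\lambda}{\mu}$ is realized (up to scaling) by a homological shift of the pure resolution from \cite[Theorem 4.1]{FLS16}, and direct sums of such complexes realize any nonnegative integer combination.

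The main obstacle I anticipate is the splitting step in $(i) \Rightarrow (iv)$: verifying rigorously that the two-term subcomplex $\mb{S}_\lambda(V)\otimes R \leftarrow \mb{S}_\mu(V)\otimes R$ really is a direct summand of $F_\bullet$ as a complex — not just that the relevant entry of the differential is an isomorphism on that isotypic piece. This requires care with the fact that $\mb{S}_\mu(V)\otimes R$ may also map to other summands of $F_{i_0}$ and receive maps from other summands of $F_{i_0+2}$; one must change basis among the free summands to arrange the complement, and check that minimality is preserved after doing so. This is exactly the kind of bookkeeping done in \cite[Lemma 3.6]{FLS16}, and I expect it to go through, but it is the step that needs the most attention. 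Everything else reduces to Hall's theorem and the formal properties of homologically-shifted pure tables.
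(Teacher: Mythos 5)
Your cycle of implications $(iii)\Leftrightarrow(iv)\Rightarrow(ii)\Rightarrow(iv)$ together with $(iv)\Rightarrow(i)$ matches the paper's outline, and the handling of those steps (Hall's theorem, realizability of pure tables, direct check of the inequalities) is correct. The issue is entirely in your argument for $(i)\Rightarrow(iv)$, which is not the paper's argument and which has a genuine gap.

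First, a sign error: since the differential $F_{i}\to F_{i-1}$ is \emph{label-decreasing} (a generator $x_\mu$ maps to combinations of $x_{\lambda'}$ with $\lambda'\subsetneq\mu$), it is a \emph{minimal} label $\lambda$ in degree $i_0$ whose isotypic piece is automatically in the kernel of $d_{i_0}$, not a maximal one. With that corrected, your observation that some $\mu\supsetneq\lambda$ must appear in $F_{i_0+1}$ is fine.

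The real problem is the splitting step. You want to split off a two-term subcomplex $\mb{S}_\lambda(V)\otimes R \leftarrow \mb{S}_\mu(V)\otimes R$ and claim its Betti table is $\shiftedpuretable{i_0}{\lambda}{\mu}$. But the \emph{rank} Betti table of that subcomplex has entries $d_\lambda(k)$ and $d_\mu(k)$, not $1$ and $1$; it is only a multiple of a pure table if $d_\lambda(k)=d_\mu(k)$, which is generically false (e.g. $\lambda=\varnothing$, $\mu=(1)$ gives ranks $1$ and $k$). Worse, a two-term complex of free modules of unequal rank cannot have torsion homology after inverting $\det$: over $\mathrm{Frac}(R)$ the kernel and cokernel cannot both vanish. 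So the complementary complex $F_\bullet'$ would not have torsion homology either, and the induction cannot proceed. The underlying reason is that a perfect matching on $G(\widetilde{\beta})$ pairs up individual \emph{basis vectors} of the isotypic components, not entire free summands $\mb{S}_\lambda(V)\otimes R$, so a module-level splitting is the wrong granularity.

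The paper sidesteps this entirely: it tensors the minimal complex with $\mathrm{Frac}(R)$ to get an exact sequence of vector spaces with a labeled basis (each $\mc{C}$-basis vector of each copy of $\mb{S}_\lambda(V)$), and then applies a separate lemma (Proposition \ref{prop:les-matching}) showing that the coefficient graph of any long exact sequence of vector spaces with chosen bases has a perfect matching. That lemma is proved by splitting at the vector-space level using Zorn's lemma, not by splitting the complex of $R$-modules. To repair your argument you would need to work at the basis-vector level, at which point you would essentially be rediscovering that lemma; the module-level peeling as written does not close.
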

\begin{remark}
It is clear that (iv) $\Rightarrow$ (iii): each edge of a perfect matching indicates a pure table summand for $\widetilde{\beta}$. We have (iii) $\Rightarrow$ (ii) since the conditions (ii) hold for each homologically-shifted pure table individually. Hall's Matching Theorem gives the statement (ii) $\Leftrightarrow$ (iv) and shows that we may exchange `even' and `odd' in the definition of the convexity inequalities. Homologically-shifted pure tables are realizable, hence (iii) $\Rightarrow$ (i). We will complete the proof by exhibiting a perfect matching on any realizable Betti graph, so that (i) $\Rightarrow$ (iv). We postpone the proof until Section \ref{subsubsec:les-matching} (Corollary \ref{cor:derived-cone-proof}).
\end{remark}

\section{The pairing between Betti tables and cohomology tables}
\label{sec:the-pairing}

In this section, we establish the numerical pairing between Betti tables and cohomology tables. We recall that the pairing is defined as follows (Definition \ref{def:pairing-intro}):
\begin{equation}
\begin{split}
\widetilde{\Phi} : \mb{BT}_{k,n} \times \mb{CT}_{k,n} &\to \widetilde{\mb{BT}}_{k,k}, \\
(\beta,\gamma) \qquad &\mapsto \widetilde{\Phi}(\beta,\gamma),
\end{split}
\end{equation}
with $\widetilde{\Phi}$ the (derived) rank Betti table with entries
\begin{equation} \label{eqn:pairing-entries-recall}
\widetilde{\phi_{i,\lambda}}(\beta, \gamma) = \sum_{p-q=i} \beta_{p,\lambda} \cdot \gamma_{q,\lambda}.
\end{equation}
Recall also that the convention is that homological degree ($p$ and $i$) decreases under the boundary map of the complex.

The remainder of this section is devoted to the proof of the following:

\begin{theorem}[Pairing the equivariant Boij-S\"{o}derberg cones] \label{thm:pairing}
The pairing $\widetilde{\Phi}$ restricts to a pairing of cones,
\[\emcone{k,n} \times \escone{k,n} \to \ercone{k,k}.\]
\end{theorem}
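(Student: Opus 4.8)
\emph{Setup and reduction.} The plan is first to reduce to the case $\beta = \beta(F_\bullet)$ for a bounded minimal equivariant complex $F_\bullet$ of free $R_{k,n}$-modules exact away from $X_{k-1}$, and $\gamma = \gamma(\mc{E})$ for a coherent sheaf $\mc{E}$ on $Gr(k,W)$; this is legitimate because $\widetilde\Phi$ is bilinear, $\emcone{k,n}$ and $\escone{k,n}$ are spanned by such $\beta$ and $\gamma$, and $\ercone{k,k}$ is a cone. (The statement with $\mcone{k,n}$ then follows from $\mcone{k,n}\subseteq\emcone{k,n}$.) By the equivalence (i)$\Leftrightarrow$(iv) of Theorem~\ref{thm:derived-cone}, it then suffices to produce a perfect matching on the Betti graph $G(\widetilde\Phi(\beta,\gamma))$, and it is this matching that I would construct. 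After twisting $F_\bullet$ and $\mc{E}$ by matching powers of $\det$ one may assume all weights occurring are partitions.

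\emph{Descent to the Grassmannian.} Next I would restrict $F_\bullet$ to the full-rank locus $U = \Emb(V,W)$, where it becomes an exact complex of free $\mc{O}_U$-modules (its homology is supported on $X_{k-1} = X\setminus U$), and descend along the $GL(V)$-torsor $\pi\colon U\to Gr(k,W)$, using the tautological isomorphism $V\otimes\mc{O}_U\cong\pi^*\mc{S}$. This yields a bounded exact complex of vector bundles $\mc{F}_\bullet$ on $Gr(k,W)$ with $\mc{F}_p = \bigoplus_\lambda \mb{S}_\lambda(\mc{S})^{\oplus\beta_{p,\lambda}}$. Minimality of $F_\bullet$ together with the Cauchy identity forces every nonzero component of $F_p\to F_{p-1}$, hence of $\mc{F}_p\to\mc{F}_{p-1}$, running from an $\mb{S}_\mu$-summand to an $\mb{S}_\lambda$-summand to satisfy $\lambda\subsetneq\mu$; thus $\mc{F}_\bullet$ carries the exhaustive filtration by the subcomplexes $\mc{F}^{\le m}_\bullet := \bigoplus_p\bigoplus_{|\lambda|\le m}\mb{S}_\lambda(\mc{S})^{\oplus\beta_{p,\lambda}}$. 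Tensoring the (locally split) exact complex $\mc{F}_\bullet$ with $\mc{E}$ preserves exactness, so $\mb{H}^\bullet(Gr(k,W),\mc{F}_\bullet\otimes\mc{E}) = 0$.

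\emph{The partition-size spectral sequence.} I would then run the spectral sequence of the filtration $\mc{F}^{\le\bullet}_\bullet\otimes\mc{E}$. Crucially this is \emph{not} the naive hypercohomology spectral sequence $E_1^{p,q} = H^q(\mc{F}_p\otimes\mc{E})$, whose higher differentials $d_r$ move the diagonal index $p-q$ by $2r-1$ and so, for $r\ge 2$, do not lie along edges of the Betti graph. For the partition-size filtration the $E_1$-page is $\bigoplus_{p,q,\lambda} H^q(\mc{E}\otimes\mb{S}_\lambda(\mc{S}))^{\oplus\beta_{p,\lambda}}$, graded by $|\lambda|$ (the filtration level) and by total cohomological degree $q-p$; collecting the summands with $p-q = i$ and fixed $\lambda$, the corresponding graded piece of $E_1$ has dimension $\sum_{p-q=i}\beta_{p,\lambda}\gamma_{q,\lambda} = \widetilde{\phi_{i,\lambda}}(\beta,\gamma)$, so a homogeneous basis of $E_1$ is exactly a vertex set of $G(\widetilde\Phi(\beta,\gamma))$, a basis vector in the $(i,\lambda)$-piece being a vertex labelled $(i,\lambda)$. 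Each differential $d_r$ ($r\ge 1$) raises the total degree by $1$ — hence sends $i = p-q$ to $i-1$ — lowers the filtration level by $r$, and, being induced by the differential of $\mc{F}_\bullet$, connects a $\mu$-type class only to $\lambda$-type classes with $\lambda\subsetneq\mu$; equivalently, every $d_r$ runs along an edge $(i-1,\lambda)\leftarrow(i,\mu)$ of $G(\widetilde\Phi(\beta,\gamma))$. And the spectral sequence abuts to $\mb{H}^\bullet(\mc{F}_\bullet\otimes\mc{E}) = 0$.

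\emph{Extracting the matching --- the hard part.} What then remains is the combinatorial assertion: a bounded spectral sequence abutting to $0$, all of whose differentials run along edges of $G(\widetilde\Phi)$ in the above sense, forces $G(\widetilde\Phi)$ to have a perfect matching; equivalently, it forces the convexity inequalities of Definition~\ref{def:segment-ineqs}. This is the step I expect to be the main obstacle. A spectral sequence abutting to $0$ only cancels \emph{subspaces} of $E_1$, and neither the kernel nor the image of a $d_r$ respects the decomposition of $E_1$ into the individual vertex-spaces $(E_1)_{(i,\lambda)}$ --- only the coarser filtration by $|\lambda|$ is respected throughout --- so a matching cannot simply be read off the pages, and Hall's condition must be checked directly. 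The plan is to do this by induction, peeling off the lowest nonzero differential (or descending the partition-size filtration one level at a time) via the associated long exact sequences: this is the same device used to prove Theorem~\ref{thm:derived-cone} in Section~\ref{subsubsec:les-matching}, with the $E_1$-page here playing the role that the generic fiber of a complex over $R_{k,k}$ plays there. The delicate point will be to keep track of the \emph{strictness} of the containments $\lambda\subsetneq\mu$ carried by the differentials; a mere Euler-characteristic count recovers only the rank condition $\sum_{i,\lambda}(-1)^i\widetilde{\phi_{i,\lambda}} = 0$, not the inequalities. With the matching produced, the implication (iv)$\Rightarrow$(i) of Theorem~\ref{thm:derived-cone} gives $\widetilde\Phi(\beta,\gamma)\in\ercone{k,k}$, completing the proof.
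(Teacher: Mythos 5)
Your reduction to producing a perfect matching on $G(\widetilde\Phi(\beta,\gamma))$, via Theorem~\ref{thm:derived-cone}, and your descent of $F_\bullet$ to an exact complex of bundles $\ms{F}_\bullet$ on $Gr(k,W)$, match the paper. But two things go wrong after that. First, your stated reason for abandoning the naive hypercohomology spectral sequence is a convention error: with the paper's indexing, where the homological index $p$ decreases under the boundary map, the differential $d_r\colon E_r^{p,q}\to E_r^{p-r,q-r+1}$ changes $p-q$ by exactly $1$ for \emph{every} $r$, so every higher differential does lie along a Betti-graph edge $(i+1)\to(i)$. The actual obstruction --- and the paper's cautionary example at the end of Section~\ref{sec:the-pairing} is there precisely to illustrate this --- is that a higher differential need not respect the poset order on labels: in that example $d_3$ sends a $(3)$-vertex to a $(1,1)$-vertex, and $(1,1)\not\subsetneq(3)$. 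Second, and more seriously, the step you flag as ``the hard part'' really is a gap: on the $E_r$ pages for $r\ge 2$ the $\lambda$-decomposition is lost (kernels and images of $d_1$ do not respect it), so ``$d_r$ of a $\mu$-type class'' is not well-defined, and the sentence asserting that every $d_r$ runs along a $\lambda'\subsetneq\mu$ edge is not a statement about the spectral sequence --- it is at best a statement about carefully chosen representatives, which you do not construct. The proposed fix (``induction, peeling off the lowest nonzero differential via long exact sequences'') is not worked out, and the delicate bookkeeping of strict containments that you correctly anticipate is exactly what a proof must supply.

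For comparison, the paper's proof (Theorem~\ref{thm:e1-perfect-matching}) never climbs past the $E_0$ page. It splits the vertical differential by choosing decompositions $E^{p,q}=B\oplus H\oplus B^*$ with $B=\mathrm{im}(d_v)$, $B\oplus H=\ker(d_v)$, replaces the $B$-basis on each diagonal by $\tilde b_\lambda=d_{tot}(b^*_\lambda)$, and thereby exhibits $\mathrm{Tot}(\widetilde B\oplus B^*)$ as an exact subcomplex of $\mathrm{Tot}(E)$. The quotient $\mathrm{Tot}(H)$ is then an exact complex with the same terms as $\mathrm{Tot}(E_1)$, but with maps whose nonzero coefficients are checked by hand to satisfy the strict label-decrease; Proposition~\ref{prop:les-matching} applied to $\mathrm{Tot}(H)$ then produces the matching. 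Your idea of filtering $\mathrm{Tot}(E)$ by $|\lambda|$ rather than by $p$ is a genuinely different route, and it is plausible it could be made to work --- unlike the $p$-filtration, the correction terms needed to compute $d_r$ can here be taken with labels $\subsetneq\mu$ because $d_v$ is label-preserving --- but as it stands the argument does not close. I would encourage you either to make the ``peeling'' induction precise (keeping the label bound on representatives throughout, not on the $E_r$ pages) or to switch to the paper's basis-change argument on $E_0$, which circumvents the loss of the $\lambda$-decomposition on higher pages entirely.
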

In light of our description (Theorem \ref{thm:derived-cone}) of the derived cone $\ercone{k,k}$, the goal will be to exhibit a perfect matching on the Betti graph of $\widetilde{\Phi}(\beta,\gamma)$. Along the way, we will also complete the proof of Theorem \ref{thm:derived-cone} itself, showing that $\ercone{k,k}$ is characterized by the existence of such matchings (Corollary \ref{cor:derived-cone-proof}). \\

We sketch the construction of the pairing.

\begin{proof}[Sketch of Theorem \ref{thm:pairing}]
Let $\beta = \beta(F^\bullet)$ be the Betti table of a minimal free equivariant complex $F^\bullet$ of finitely-generated $R$-modules, with $R = R_{k,n}$ the coordinate ring of the $k \times n$ matrices. Assume $F^\bullet$ is exact away from the locus of rank-deficient matrices, so descending $F^\bullet$ to $Gr(k,\mb{C}^n)$ gives an exact sequence of vector bundles $\ms{F}^\bullet$:
\begin{equation} \label{eqn:fbullet-sum-decomp}
F^\bullet = \bigoplus_\lambda \mb{S}_\lambda(V)^{\beta_{\bullet,\lambda}} \otimes R \qquad \xrightarrow{\text{descends to}} \qquad \ms{F}^\bullet = \bigoplus_\lambda \mb{S}_\lambda(\mc{S})^{\beta_{\bullet,\lambda}},
\end{equation}
with $\mc{S}$ the tautological subbundle on $Gr(k,\mb{C}^n)$. Let $\gamma = \gamma(\mc{E})$ be the $GL$-cohomology table of a coherent sheaf $\mc{E}$ on $Gr(k,n)$. Observe that $\mc{E} \otimes \ms{F}^\bullet$ is again exact.

We study the hypercohomology spectral sequence. Explicitly, we take the Cech resolution of $\mathcal{E} \otimes \mathscr{F}^\bullet$, an exact double complex of sheaves. Let $E^{\bullet,\bullet}$ be the result of taking global sections: a double complex of vector spaces. Note that each column is a direct sum of complexes, according to the $\lambda$ summands in \eqref{eqn:fbullet-sum-decomp}.

Running the sequence with the horizontal maps first, it converges to 0 on the $E_2$ page (since $\mc{E} \otimes \ms{F}^\bullet$ is an exact sequence of sheaves). In particular, $\mathrm{Tot}(E^{\bullet,\bullet})$ is exact. Running the sequence with the vertical maps first, we instead obtain, on the $E_1$ page,
\[E_1^{p,q} = \bigoplus_\lambda H^q(\mathcal{E} \otimes \mathbb{S}_\lambda(\mathcal{S}))^{\beta_{p,\lambda}}.\]
Observe that the $\lambda$ summand has dimension $\beta_{p,\lambda}(F^\bullet) \gamma_{q,\lambda}(\mathcal{E})$. The $(i,\lambda)$ coefficient produced in the Boij-S\"{o}derberg pairing, $\widetilde{\phi_{i,\lambda}}(F^\bullet, \mathcal{E})$, is the sum of this quantity along the diagonal $\{p-q=i\}$. That is, $\widetilde{\Phi}$ is akin to a Betti table for $\mathrm{Tot}(E_1)$: \[\widetilde{\phi_{i,\lambda}} = \dim_\mb{C} \mathrm{Tot}(E_1)_{i,\lambda}.\]
We emphasize, however, that there is no actual $GL_k$-action on $\mr{Tot}(E_1)$, nor an $R_{k,k}$-module structure.

Instead, we will show by homological techniques that, for a wide class of double complexes including $E^{\bullet,\bullet}$, there is a perfect matching on a graph associated to $\mr{Tot}(E_1)$; in our setting, this will give the desired perfect matching on the Betti graph of $\widetilde{\Phi}$.
\end{proof}
\noindent The key properties of the double complex $E^{\bullet,\bullet}$ constructed above are that 
\begin{itemize}
\item[(1)] Each term $E^{p,q}$ has a direct sum decomposition labeled by a poset $P$;
\item[(2)] The vertical maps $d_v$ are label-preserving;
\item[(3)] The horizontal maps are strictly-label-decreasing (in our setting, this follows by minimality of $F^\bullet$ and functoriality of the Cech complex);
\end{itemize}
By (1) and (2), the $E_1$ page (the homology of $d_v$) again has a direct sum decomposition labeled by $P$, $E_1^{p,q} = \bigoplus_{\lambda \in P} E_1^{p,q,\lambda}$. We define the following graph:

\begin{definition} \label{def:e1-coeff-graph}
The \newword{$\mathbf{E_1}$ graph} $G = G(E^{\bullet,\bullet})$ is the following directed graph:
\begin{itemize}
\item The vertex set contains $\dim(E_1^{p,q,\lambda})$ vertices labeled $(p,q,\lambda)$, for each $p,q \in \mb{Z}$ and each $\lambda \in P$;
\item The edge set includes all possible edges $(p,q,\lambda) \to (p',q',\lambda')$ whenever $\lambda' \prec \lambda$ and $(p',q') = (p-r,q-r+1)$ for some $r > 0$.
\end{itemize}
The edges of $G$ are shaped like the higher-order differentials of the associated spectral sequence, and they respect the strictly-decreasing-$P$-labels condition.
\end{definition}
We show:
\begin{theorem} \label{thm:e1-perfect-matching}
Let $E^{\bullet,\bullet}$ be a double complex of vector spaces satisfying (1)-(3). If $\mr{Tot}(E^{\bullet,\bullet})$ is exact, its $E_1$ graph has a perfect matching.
\end{theorem}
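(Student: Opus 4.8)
The plan is to extract a perfect matching on the $E_1$ graph from the exactness of $\mathrm{Tot}(E^{\bullet,\bullet})$ by running the spectral sequence of the filtered complex to its conclusion and tracking, at each page, which vertices get "cancelled" against which. The essential point is that the higher differentials $d_r$ on the $E_r$ page are shaped exactly like the edges of $G$ (that is the content of Definition \ref{def:e1-coeff-graph}), and that they respect the $P$-grading in a strictly-decreasing way by hypothesis (3). So I would argue: since $\mathrm{Tot}(E^{\bullet,\bullet})$ is exact, the spectral sequence converges to zero, hence every vertex of $G$ — i.e. every basis element of every $E_1^{p,q,\lambda}$ — must eventually die. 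A basis element of $E_1^{p,q,\lambda}$ can only die by being in the image of, or mapping isomorphically onto (a complement of) something via, a differential $d_r$ for some $r\ge 1$; and by (2)–(3) every such differential strictly decreases the $P$-label. Choosing bases compatibly page-by-page, each death pairs up two basis vectors along an arrow of the correct "spectral-sequence shape" and strictly-decreasing label, i.e. an edge of $G$, and the pairing is a bijection on all of $\bigsqcup_{p,q,\lambda} (\text{basis of }E_1^{p,q,\lambda})$. That bijection is the desired perfect matching.

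Concretely, here are the steps. \textbf{Step 1:} Set up the spectral sequence of $\mathrm{Tot}(E^{\bullet,\bullet})$ filtered by columns, so that $E_1^{p,q} = H^q_v(E^{p,\bullet})$; by (1)–(2) this inherits a $P$-grading $E_1^{p,q} = \bigoplus_{\lambda\in P} E_1^{p,q,\lambda}$, and by (3) each differential $d_r\colon E_r^{p,q}\to E_r^{p-r,q-r+1}$ is block strictly-upper-triangular for the partial order on $P$ (the $\lambda'$-component of $d_r(x)$, for $x$ of pure label $\lambda$, vanishes unless $\lambda'\prec\lambda$). \textbf{Step 2:} Choose, for each page $r$ and each $(p,q,\lambda)$, a vector-space splitting of $E_r^{p,q,\lambda}$ into $\ker d_r \oplus (\text{complement})$ and of $\ker d_r$ into $\mathrm{im}\,d_r \oplus (\text{complement} = E_{r+1}^{p,q,\lambda})$, refining the $P$-grading at every stage (possible because all maps preserve the filtration by $\lambda$). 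This gives, on passing from page $r$ to page $r+1$, an explicit bijection between a chosen basis of "the part of $E_r$ that dies at page $r$" and pairs of basis vectors joined by a $d_r$-arrow, each such arrow being an edge of $G$ up to the label-strictness which holds by (3). \textbf{Step 3:} Because $\mathrm{Tot}$ is exact, the sequence degenerates to $0$: $\bigoplus_{p,q} E_\infty^{p,q} = 0$. Hence the union over $r\ge 1$ of the "vectors that die at page $r$" is a basis for all of $\bigoplus E_1^{p,q,\lambda}$, and the matchings from Step 2 assemble into a single perfect matching on $G$. \textbf{Step 4:} Observe the matching is by definition a subset of edges covering each vertex exactly once, so we are done.

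The main obstacle — and the step to be careful about — is \textbf{Step 2}: making the page-to-page cancellation genuinely \emph{bijective} and \emph{compatible with the $P$-grading simultaneously}. One has to choose the splittings $E_r^{p,q} \cong \mathrm{im}\,d_r \oplus E_{r+1} \oplus (\text{coim}\,d_r)$ so that they are direct sums over $\lambda\in P$; this uses that $d_r$ strictly decreases labels, so one can do the decomposition by induction on $P$ (handle the minimal labels first, where $d_r$ is forced to vanish on them as a target, etc.), or equivalently pick a basis of $E_r^{p,q}$ adapted to the filtration and run Gaussian elimination respecting it. A secondary subtlety is bookkeeping: a basis vector of $E_1^{p,q,\lambda}$ may survive several pages before dying, so one must track it through the successive quotients and verify that the arrow witnessing its death, although drawn on some later page $E_r$, still corresponds to an edge $(p,q,\lambda)\to(p-r,q-r+1,\lambda')$ of $G$ — which is exactly the edge set specified in Definition \ref{def:e1-coeff-graph}, with the label drop $\lambda'\prec\lambda$ guaranteed by (3). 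Boundedness of the double complex (implicit in our setting, since $F^\bullet$ is bounded and the Čech complex is finite) ensures the spectral sequence converges and only finitely many pages are relevant, so there are no convergence issues and every vertex is accounted for after finitely many steps.
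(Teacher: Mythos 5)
Your approach — run the spectral sequence page by page and match each $E_1$ basis vector to its partner under the differential $d_r$ that kills it — is precisely the approach the authors say they tried first and abandoned, and it contains a genuine gap. The critical assertion in your argument, that ``by (2)--(3) every such differential strictly decreases the $P$-label,'' is true for $d_1$ but \emph{false} for the higher differentials $d_r$, $r\ge 2$. The reason is that $E_r^{p,q}$ is a subquotient of $E^{p,q}$, and a class on page $r$ is represented by a zig-zag $z = x + (\text{lower filtration terms})$, where the lower terms are chosen so that $d_{tot}(z)$ lands in filtration $\le p-r$. Those lower terms are only constrained to lie in smaller $p$-columns; their $P$-labels need not be $\prec \lambda$ and can in fact be \emph{incomparable} to $\lambda$. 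When $d_{tot}$ is applied, the resulting $d_r$-differential can therefore hit a label incomparable to $\lambda$, so it is not an edge of the $E_1$ graph. The paper's cautionary example at the end of Section~\ref{sec:the-pairing} exhibits exactly this: the only nonvanishing higher differential is $d_3$, which takes the class labeled $\tiny\yng(3)$ to the class labeled $\tiny\yng(1,1)$; since $(1,1)\not\subsetneq(3,0)$ this is not an edge of the $E_1$ graph, and indeed \emph{no} combination of the spectral-sequence differentials gives a valid matching there, even though the total complex is exact.

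A secondary, related problem is that the $P$-grading does not canonically descend past $E_1$. The differential $d_1$ is only $P$-filtered, not $P$-graded, so $E_2 = H(E_1,d_1)$ carries a $P$-filtration but no preferred grading; choosing splittings (your Step 2, the step you correctly flag as delicate) cannot repair the fact that $d_2, d_3, \dots$ may fail to respect even the filtration, as the example shows. The paper's actual proof sidesteps the higher pages entirely: working on the $E_0$ level, it decomposes each column as $B \oplus H \oplus B^*$ (boundaries, harmonic representatives, and complements to $\ker d_v$), then performs a change of basis on the total complex replacing $b \in B$ by $\tilde b := d_{tot}(d_v^{-1}b)$. This makes $\widetilde B + B^*$ an exact subcomplex of $\mathrm{Tot}(E)$, so the quotient $\mathrm{Tot}(H)$ — same underlying spaces as $\mathrm{Tot}(E_1)$ but different maps — is exact, and crucially the maps of $\mathrm{Tot}(H)$ \emph{do} strictly decrease labels, because the change of basis only mixes in lower terms. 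Then Proposition~\ref{prop:les-matching} (matchings for exact sequences) finishes the argument. In short: the fix is to produce a single exact complex with $P$-filtered maps supported on $E_1$-graph edges, rather than hoping the spectral sequence's own higher differentials have that shape — they don't.
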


We think of this theorem as a combinatorial analog of the fact that the associated spectral sequence (beginning with the homology of $d_v$) converges to zero. We explore this idea further in Section \ref{subsec:matchings}.

In our setting, we identify the vertices of the $E_1$ graph and the Betti graph of $\widetilde{\Phi}(\beta,\gamma)$; for any such identification, the edges of the $E_1$ graph become a subset of the Betti graph's edges. (We may recover the missing edges by allowing $r \leq 0$ in Definition \ref{def:e1-coeff-graph}.) Hence, the perfect matching produced by Theorem \ref{thm:e1-perfect-matching} is valid for the Betti graph, completing the proof of Theorem \ref{thm:pairing}.

\subsection{Perfect matchings in linear and homological algebra} \label{subsec:matchings}
Our approach uses linear maps to produce perfect matchings. The starting point is the following construction:

\begin{definition}
Let $T : V \to W$ be a map of vector spaces, having specified bases $\mc{V}, \mc{W}$. The \newword{coefficient graph} $G$ of $T$ is the directed bipartite graph with vertex set $\mc{V} \sqcup \mc{W}$ and edges
\[ E = \{v \to w : T(v) \text{ has a nonzero $w$-coefficient} \}.\]
Note that the adjacency matrix of $G$ is $T$ with all nonzero coefficients replaced by $1$'s.
\end{definition}

\begin{proposition} \label{prop:baby-matching}
For finite-dimensional vector spaces, the coefficient graph of an isomorphism admits a perfect matching.
\end{proposition}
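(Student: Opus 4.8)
The plan is to deduce the existence of a perfect matching from Hall's Matching Theorem applied to the coefficient graph $G$ of an isomorphism $T : V \to W$, with its two vertex classes $\mc{V}$ and $\mc{W}$. Since $T$ is an isomorphism, $\dim V = \dim W$, so $|\mc{V}| = |\mc{W}|$ and Hall's theorem is applicable in the form stated in the excerpt. Thus it suffices to verify Hall's condition: for every subset $S \subseteq \mc{V}$, the neighborhood $\Gamma(S) \subseteq \mc{W}$ satisfies $|\Gamma(S)| \geq |S|$.

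The key step is the following observation. Fix $S \subseteq \mc{V}$, and let $V_S \subseteq V$ be the span of the basis vectors in $S$, so $\dim V_S = |S|$. By the definition of the coefficient graph, the only basis vectors of $\mc{W}$ that can appear with nonzero coefficient in any $T(v)$ for $v \in V_S$ are exactly those in $\Gamma(S)$; that is, $T(V_S)$ is contained in $W_{\Gamma(S)}$, the span of the basis vectors in $\Gamma(S)$. Since $T$ is injective, $\dim T(V_S) = \dim V_S = |S|$, and since $T(V_S) \subseteq W_{\Gamma(S)}$ we get $|S| = \dim T(V_S) \leq \dim W_{\Gamma(S)} = |\Gamma(S)|$. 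This is precisely Hall's condition, so $G$ admits a perfect matching.

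There is essentially no obstacle here: the argument is a direct translation between the combinatorial statement (Hall's condition) and the linear-algebraic fact that an injective map cannot decrease dimension. The only point requiring a moment's care is the correct bookkeeping of which vertex class plays the role of ``$L$'' versus ``$R$'' in Hall's theorem and the direction of the edges $v \to w$; but since $|\mc{V}| = |\mc{W}|$, Hall's condition on one side is equivalent to Hall's condition on the other (as noted in the theorem statement), so either convention works. This proposition is the base case from which the more substantial matching results for double complexes will be bootstrapped.
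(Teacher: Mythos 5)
Your proof is correct, but it takes a genuinely different route from the paper's. The paper's argument is via the determinant: since $\det(T) \ne 0$, the Leibniz expansion of $\det(T)$ has at least one nonvanishing monomial, and the permutation indexing that monomial directly exhibits a perfect matching of the coefficient graph (equivalently, one can iterate Laplace expansion along a row or column to build the matching inductively). Your argument instead verifies Hall's condition directly from injectivity: for a subset $S \subseteq \mc{V}$, the image $T(\mathrm{span}(S))$ is a subspace of $\mathrm{span}(\Gamma(S))$ of dimension $|S|$, forcing $|\Gamma(S)| \ge |S|$. Both are sound. The determinant argument is self-contained and quietly constructive (it pins down a specific nonzero monomial), and makes the proposition a one-liner. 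Your Hall-based argument is arguably more transparent about what is actually used --- injectivity alone --- and it extends immediately to the non-square case: an injection gives a matching saturating $\mc{V}$, a surjection one saturating $\mc{W}$ (the paper records the same observation but phrases it via a nonvanishing maximal minor). Your approach also sits comfortably within the paper, which already invokes Hall's theorem to pass between the matching and facet-inequality descriptions of $\rcone{k,k}$ and $\ercone{k,k}$.
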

We will say the corresponding bijection $\mc{V} \leftrightarrow \mc{W}$ is \newword{compatible with $T$}, a combinatorial analog of the fact that $T$ is an isomorphism. The proof of existence is simple, but essentially nonconstructive in practice. Here are two ways to do it:
\begin{enumerate}
\item[(i)] All at once: since $\det(T) \ne 0$, some monomial term of $\det(T)$ is nonzero. This exhibits the perfect matching.
\item[(ii)] By induction, using the Laplace expansion: expand $\det(T)$ along a row or column; some term $a_{ij} \cdot \text{(complementary minor)}$ is nonzero, and so on.
\end{enumerate}
Similarly, if $T$ is merely assumed to be injective or surjective, we may produce a maximal matching in this way (choose some nonvanishing maximal minor).

We generalize Proposition \ref{prop:baby-matching} to the setting of homological algebra in three ways: to infinite-dimensional vector spaces, to long exact sequences, and to double complexes (motivated by spectral sequences).

\begin{proposition} \label{prop:isom-matching}
For vector spaces of arbitrary dimension, the coefficient graph of an isomorphism admits a perfect matching.
\end{proposition}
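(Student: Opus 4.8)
**Proof proposal for Proposition~\ref{prop:isom-matching} (coefficient graph of an isomorphism of arbitrary-dimensional vector spaces admits a perfect matching).**

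The plan is to reduce the infinite-dimensional statement to the finite-dimensional one (Proposition~\ref{prop:baby-matching}) by a compactness / K\"onig-type argument. First I would fix bases $\mc{V}, \mc{W}$ and let $G$ be the coefficient graph of the isomorphism $T : V \to W$. The crucial structural observation is that $G$ is \emph{locally finite}: each basis vector $v \in \mc{V}$ maps to a finite linear combination of elements of $\mc{W}$, so $v$ has finite degree; and since $T$ is an isomorphism, each $w \in \mc{W}$ is a finite linear combination of $T(v)$'s (express $w$ in terms of the basis $T(\mc{V})$ of $W$), so $w$ also has finite degree. Thus $G$ is a locally finite bipartite graph, and the natural tool is the infinite version of Hall's theorem (the Hall--Rado / K\"onig theorem for locally finite bipartite graphs, or equivalently a compactness argument over finite subgraphs), which says a perfect matching exists provided Hall's condition $|\Gamma(S)| \geq |S|$ holds for every finite subset $S$ on either side.

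So the main content is to verify Hall's condition on finite subsets. I would argue symmetrically on the two sides. Take a finite subset $S \subseteq \mc{V}$; let $\Gamma(S) \subseteq \mc{W}$ be its (finite, by local finiteness) neighborhood. Then $T$ restricted to $\mr{span}(S)$ lands inside $\mr{span}(\Gamma(S))$, and since $T$ is injective, $|S| = \dim \mr{span}(S) \leq \dim \mr{span}(\Gamma(S)) = |\Gamma(S)|$. Dually, take a finite subset $S \subseteq \mc{W}$ with neighborhood $\Gamma(S) \subseteq \mc{V}$; I claim $S \subseteq T(\mr{span}(\Gamma(S)))$. Indeed, writing each $w \in S$ in the basis $T(\mc{V})$ of $W$, $w$ is a combination of $T(v)$'s, and every such $v$ must be adjacent to $w$ (a nonzero $T(v)$-coefficient of $w$ forces a nonzero $w$-coefficient of $T(v)$), hence $v \in \Gamma(S)$; so $S \subseteq T(\mr{span}(\Gamma(S)))$. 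Since $T$ is injective, $|S| = \dim \mr{span}(S) \leq \dim T(\mr{span}(\Gamma(S))) = \dim \mr{span}(\Gamma(S)) = |\Gamma(S)|$. This establishes Hall's condition on both sides, and the locally-finite Hall theorem then yields a perfect matching.

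I expect the main obstacle to be purely expository rather than mathematical: deciding how much of the infinite Hall theorem to prove versus cite, and being careful that local finiteness genuinely holds (the verification that $w$-vertices have finite degree uses surjectivity, i.e. that $T(\mc{V})$ is spanning, and is the one spot where the isomorphism hypothesis is used beyond injectivity). An alternative that avoids invoking infinite Hall explicitly would be a direct compactness argument: every finite subgraph induced on $F_{\mc{V}} \sqcup \Gamma(F_{\mc{V}})$ for finite $F_{\mc{V}} \subseteq \mc{V}$ has a matching saturating $F_{\mc{V}}$ (by the finite-dimensional argument above plus finite Hall), and one assembles these into a global perfect matching by a Tychonoff/K\"onig's-lemma compactness argument on the (compact) space of edge subsets; I would likely present whichever of these is shortest given what the paper is willing to cite, but the finite-neighborhood rank inequalities above are the reusable core in either case.
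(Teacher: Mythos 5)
The claim of local finiteness is false, and that is exactly where the argument breaks. You justify the finite degree of $w$-vertices via the step ``a nonzero $T(v)$-coefficient of $w$ forces a nonzero $w$-coefficient of $T(v)$,'' but that implication does not hold: take, e.g., $T(v_1) = w' + w''$, $T(v_2) = w - w' - w''$, $T(v_3) = w''$ with $w, w', w''$ distinct basis vectors; this is an isomorphism and $w = T(v_1) + T(v_2)$ has nonzero $T(v_1)$-coefficient, yet $T(v_1)$ has no $w$-component, so $v_1$ is not adjacent to $w$. Worse, a $w$-vertex can genuinely have infinite degree: with basis $e_1, e_2, \dots$ of $V = W$, set $T(e_1) = e_1$ and $T(e_i) = e_i + e_1$ for $i \geq 2$; this is an isomorphism, and the $w$-vertex $e_1$ is adjacent to every $v$-vertex. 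Once local finiteness fails, the finite-subset Hall condition is no longer sufficient for a perfect matching --- in the bipartite graph where $v_0$ is adjacent to all of $\{w_1, w_2, \dots\}$ and $v_i$ is adjacent only to $w_i$ for $i \geq 1$, Hall's condition holds on finite sets, the graph is countable, but no perfect matching exists. So the locally-finite Hall/compactness machinery you invoke does not apply, and the non-locally-finite matching theory (Aharoni et al.) is far heavier than the problem warrants.

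The paper's proof sidesteps all of this with a direct greedy construction. It first reduces to countable bases by a ping-pong argument: starting from a single $b \in \mc{V}$, alternately adjoin the finitely many basis vectors appearing in $T$-images and in $T^{-1}$-images, and take the union; $T$ then splits as a direct sum of isomorphisms between countable-dimensional coordinate subspaces. On each such piece, matching is done one vertex at a time: from $T(v) = \sum a_i w_i$ one gets $v = \sum a_i T^{-1}(w_i)$, so some $w = w_i$ with $a_i \neq 0$ also has $T^{-1}(w)$ with nonzero $v$-coefficient; matching $v \leftrightarrow w$ and deleting those two basis vectors leaves an isomorphism of the complementary coordinate subspaces with the same remaining coefficients, and alternating which side supplies the next unmatched vertex ensures every basis element is eventually used. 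Your finite-rank inequality $|S| \leq |\Gamma(S)|$ for finite $S$ is correct and is essentially the content of Proposition~\ref{prop:baby-matching}, but it cannot be bootstrapped to the infinite case along the route you sketch.
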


We will not need Proposition \ref{prop:isom-matching} for our proof of Theorem \ref{thm:e1-perfect-matching}, so we prove it in the appendix.

\subsubsection{Long exact sequences and the proof of Theorem \ref{thm:derived-cone}} \label{subsubsec:les-matching}
We generalize to the case of long exact sequences. Let 
\[ \cdots \leftarrow V_i \xleftarrow{\delta} V_{i+1} \leftarrow \cdots\]
be a long exact sequence, with $\mc{V}_i$ a fixed basis for $V_i$. (The vector spaces may be finite- or infinite-dimensional.)
\begin{definition}
The \newword{coefficient graph} $G$ for $(V_\bullet,\delta)$ (with respect to $\mc{V}_\bullet$) is the directed graph with vertex set $\bigsqcup_i \mc{V}_i$ and an edge $v \to v'$ whenever $\delta(v)$ has a nonzero $v'$-coefficient.
\end{definition}
\begin{proposition} \label{prop:les-matching}
The coefficient graph of a long exact sequence has a perfect matching.
\end{proposition}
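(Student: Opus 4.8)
The goal is to produce a perfect matching on the coefficient graph $G$ of a long exact sequence $(V_\bullet, \delta)$. The strategy is to reduce to the finite-dimensional isomorphism case (Proposition \ref{prop:baby-matching}, or its infinite-dimensional version Proposition \ref{prop:isom-matching}) by breaking the long exact sequence into short exact pieces. First I would recall that a long exact sequence splits into short exact sequences $0 \to Z_i \to V_i \to B_{i-1} \to 0$, where $Z_i = \ker(\delta|_{V_i}) = \mathrm{image}(\delta: V_{i+1} \to V_i) = B_i$ by exactness. The subtlety is that these short exact sequences do not come with bases compatible with the fixed bases $\mc{V}_i$, so a naive splitting argument does not immediately talk about the coefficient graph $G$, whose edges are defined using the $\mc{V}_i$.

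\textbf{Key steps.} The plan is as follows. (1) Choose, for each $i$, a subset $\mc{B}_i \subseteq \mc{V}_i$ such that the images $\delta(v)$ for $v \in \mc{B}_i$ form a basis of $B_{i-1} = \mathrm{image}(\delta: V_i \to V_{i-1})$; this is possible by ordinary linear algebra (extract a maximal linearly independent subset of $\delta(\mc{V}_i)$). Set $\mc{Z}_i = \mc{V}_i \setminus \mc{B}_i$, so $|\mc{Z}_i| = \dim Z_i = \dim B_i$. (2) Observe that $\delta$ restricts to a linear map $\mathrm{span}(\mc{B}_{i+1}) \to Z_i \subseteq V_i$ which is an isomorphism onto $Z_i$ (it is injective since $\mc{B}_{i+1}$ maps to a basis of $B_i = Z_i$, and surjective onto $Z_i$ by construction). (3) Now I want to match each $v \in \mc{Z}_i$ with some $v' \in \mc{B}_{i+1}$ via an edge of $G$, i.e.\ with $v'$ such that $\delta(v')$ has nonzero $v$-coefficient. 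Consider the composite $\mathrm{span}(\mc{B}_{i+1}) \xrightarrow{\delta} Z_i \hookrightarrow V_i \twoheadrightarrow \mathrm{span}(\mc{Z}_i)$, where the last map is the coordinate projection killing $\mc{B}_i$. The key claim is that this composite is an isomorphism: its source and target have the same dimension $|\mc{Z}_i| = |\mc{B}_{i+1}|$, and it is injective because if $\delta(w)$ lies in $\mathrm{span}(\mc{B}_i)$ for $w \in \mathrm{span}(\mc{B}_{i+1})$, then since $\delta(w) \in Z_i$ and $Z_i \cap \mathrm{span}(\mc{B}_i) = 0$ (as $\mc{B}_i$ maps isomorphically to $B_{i-1}$, its span meets $\ker\delta = Z_i$ trivially), we get $\delta(w) = 0$, hence $w = 0$ by step (2). (4) Apply Proposition \ref{prop:baby-matching} (finite-dimensional case, as $\mc{Z}_i$ and $\mc{B}_{i+1}$ are finite when the $V_i$ are; otherwise Proposition \ref{prop:isom-matching}) to this isomorphism $\mathrm{span}(\mc{B}_{i+1}) \to \mathrm{span}(\mc{Z}_i)$ to get a perfect matching between $\mc{B}_{i+1}$ and $\mc{Z}_i$ whose edges are edges of $G$ (a nonzero $\mc{Z}_i$-coefficient of $\delta(v')$ is in particular a nonzero $v$-coefficient). (5) Take the union over all $i$ of these matchings: every vertex $v \in \mc{V}_i = \mc{Z}_i \sqcup \mc{B}_i$ is covered exactly once — once as an element of $\mc{Z}_i$ (matched to something in $\mc{B}_{i+1}$) or once as an element of $\mc{B}_i$ (matched to something in $\mc{Z}_{i-1}$). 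This is the desired perfect matching on $G$.

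\textbf{Main obstacle.} The main point requiring care is step (3): verifying that the "diagonal block" $\mathrm{span}(\mc{B}_{i+1}) \to \mathrm{span}(\mc{Z}_i)$ is genuinely an isomorphism, not merely that the dimensions match. This is where exactness of the sequence is used in an essential way (via $\mathrm{image}(\delta: V_{i+1} \to V_i) = \ker(\delta: V_i \to V_{i-1})$), and where the compatibility of the chosen splitting with the fixed bases must be tracked. The decomposition $\mc{V}_i = \mc{Z}_i \sqcup \mc{B}_i$ must be made consistently so that a single $v \in \mc{V}_i$ plays exactly one role; I would organize the bookkeeping by fixing all the subsets $\mc{B}_i$ first and only then defining the maps. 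If one prefers to avoid invoking Proposition \ref{prop:isom-matching} in the infinite-dimensional case, one can instead note that each basis vector $v \in \mc{Z}_i$ has $\delta$-preimage-witnesses involving only finitely many elements of $\mc{B}_{i+1}$, reduce to a locally finite bipartite graph, and run the same isomorphism argument there; but it is cleaner to just cite Proposition \ref{prop:isom-matching}. A final remark: combined with the equivalences already noted in the remark after Theorem \ref{thm:derived-cone}, applying Proposition \ref{prop:les-matching} to the total complex of a minimal free equivariant complex exact off the rank-deficient locus yields the implication (i) $\Rightarrow$ (iv) there, completing the proof of Theorem \ref{thm:derived-cone} (Corollary \ref{cor:derived-cone-proof}).
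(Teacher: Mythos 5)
Your proof is correct and takes essentially the same route as the paper's: you split each basis $\mc{V}_i$ into a subset $\mc{B}_i$ descending to a basis of $\mathrm{im}(\delta)\subset V_{i-1}$ and its complement $\mc{Z}_i$, observe that the composite $\mathrm{span}(\mc{B}_{i+1})\xrightarrow{\delta}V_i\twoheadrightarrow\mathrm{span}(\mc{Z}_i)$ is an isomorphism whose coefficients are among those of $\delta$, and apply the isomorphism-matching proposition diagonal by diagonal. The paper calls these subsets $\mc{F}_i$ and $\mc{G}_i$ and is terser about the isomorphism verification, but the construction, the use of Zorn's Lemma in the infinite case, and the reduction to Propositions \ref{prop:baby-matching}/\ref{prop:isom-matching} are identical.
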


\begin{proof}
Choose subsets $\mc{F}_i \subset \mc{V}_i$ descending to bases of $\mathrm{im}(\delta) \subset V_{i-1}$, using Zorn's Lemma in the infinite case. Let $\mc{G}_i = \mc{V}_i - \mc{F}_i$, and let $F_i = \mathrm{span}(\mc{F}_i)$ and $G_i = \mathrm{span}(\mc{G}_i)$. The composition $\tilde{\delta} : F_{i+1} \hookrightarrow V_{i+1} \xrightarrow{\delta} V_i \twoheadrightarrow G_i$ is an isomorphism and has the same coefficients as $\delta$, restricted to $\mc{F}_{i+1}$ and $\mc{G}_i$. 
Thus Proposition \ref{prop:baby-matching} (or \ref{prop:isom-matching} in the infinite case) yields a matching of $\mc{F}_{i+1}$ with $\mc{G}_i$.
%
\end{proof}

At this point, we complete the proof of Theorem \ref{thm:derived-cone}, characterizing the derived Boij-S\"{o}derberg cone $\ercone{k,k}$ of the square matrices.
\begin{corollary} \label{cor:derived-cone-proof}
If $\widetilde{\beta} \in \ercone{k,k}$, then the Betti graph $G(\widetilde{\beta})$ has a perfect matching.
\end{corollary}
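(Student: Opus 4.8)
The plan is to derive Corollary~\ref{cor:derived-cone-proof} from Proposition~\ref{prop:les-matching} by applying it to a single long exact sequence built out of the realizing complex. Suppose $\widetilde{\beta} \in \ercone{k,k}$; by definition it is a positive combination of (rank) Betti tables of bounded minimal equivariant free complexes $F_\bullet$ over $R = R_{k,k}$ whose homology is torsion, i.e.\ which are exact away from the determinant hypersurface $\{\det = 0\}$. Since a perfect matching on a disjoint union of Betti graphs is just a union of perfect matchings, and since rescaling entries by positive integers only duplicates vertices, it suffices to treat a single such complex $F_\bullet$, after clearing denominators. So fix $F_\bullet$ minimal, equivariant, with all homology torsion.

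First I would pass to the localization (or, equivalently, tensor with the fraction field $\mathrm{Frac}(R)$, or invert $\det$): because the homology of $F_\bullet$ is supported on $\{\det = 0\}$, the localized complex $F_\bullet[\det^{-1}]$ is exact. Now $F_\bullet$ is a complex of \emph{free} $GL(V)$-equivariant modules, $F_i = \bigoplus_\lambda \mathbb{S}_\lambda(V)^{\beta_{i,\lambda}} \otimes R$, so after inverting $\det$ we get a long exact sequence of $GL(V)$-representations (over the ring $R[\det^{-1}]$). The key point is the choice of basis: I want a basis of each $F_i[\det^{-1}]$ indexed exactly by the vertices of the Betti graph $G(\widetilde{\beta})$, i.e.\ by pairs $(i,\lambda)$ with multiplicity $\widetilde{\beta_{i,\lambda}}$ (the \emph{rank}, which is why rank tables are the right objects here). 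One clean way: take the fiber at a fixed full-rank matrix $T_0 \in \Emb(V,W)$, i.e.\ specialize $R \to \mathbb{C}$ along $\mathfrak{m}_{T_0}$. Since $\det(T_0) \ne 0$ this specialization factors through $R[\det^{-1}]$, and because $F_\bullet$ is a complex of free modules the specialized complex $F_\bullet \otimes_R \mathbb{C}_{T_0}$ is still exact — its homology is $\mathrm{Tor}$ of the (exact, hence zero-homology after localizing) localized complex against a flat-enough module, or more simply: a bounded complex of free modules that is exact over the localization $R_{\mathfrak{m}_{T_0}}$ stays exact after $\otimes \,\mathbb{C}$ since the localized complex is split exact. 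The result is a finite exact sequence of finite-dimensional $\mathbb{C}$-vector spaces $\cdots \leftarrow (F_i)_{T_0} \leftarrow (F_{i+1})_{T_0} \leftarrow \cdots$, with $\dim (F_i)_{T_0} = \sum_\lambda \widetilde{\beta_{i,\lambda}}$, and a preferred basis coming from the Schur-functor decomposition of each $\mathbb{S}_\lambda(V)$; each basis vector carries a label $(i,\lambda)$.

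Next I would record that the differentials are \emph{strictly label-decreasing} in the sense that their coefficient graph only has edges $(i,\lambda) \to (i+1,\mu)$ with $\lambda \subsetneq \mu$ — equivalently, no edges within the same $\lambda$ and none with $\lambda \supseteq \mu$. For the $k$ into $\lambda \subsetneq \mu$ direction this is exactly minimality of $F_\bullet$: the matrix entries of the differentials lie in the maximal ideal $\mathfrak{m}$, so their images under $R \to R/\mathfrak{m} = \mathbb{C}$ vanish — but that is the wrong specialization, so instead one argues representation-theoretically: an equivariant map $\mathbb{S}_\mu(V)\otimes R \to \mathbb{S}_\lambda(V) \otimes R$ is given by an element of $\mathrm{Hom}_{GL(V)}(\mathbb{S}_\mu(V), \mathbb{S}_\lambda(V)\otimes R) = \mathrm{Hom}_{GL(V)}(\mathbb{S}_\mu(V), \bigoplus_\nu \mathbb{S}_\lambda(V)\otimes\mathbb{S}_\nu(V)\otimes\mathbb{S}_\nu(W^*))$ by Cauchy, and by Pieri/Littlewood--Richardson this is nonzero only if $\mu$ appears in some $\mathbb{S}_\lambda \otimes \mathbb{S}_\nu$ with $\nu \geq 0$, forcing $\lambda \subseteq \mu$; minimality (entries in $\mathfrak m$, i.e.\ $\nu \neq \emptyset$) upgrades this to $\lambda \subsetneq \mu$. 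This is precisely the content implicit in \cite[Lemma 3.6]{FLS16}. Consequently the coefficient graph of this long exact sequence (Definition before Proposition~\ref{prop:les-matching}) is a \emph{subgraph} of the Betti graph $G(\widetilde{\beta})$ on the same vertex set. Proposition~\ref{prop:les-matching} gives a perfect matching on the coefficient graph, and a perfect matching on a subgraph is a perfect matching on $G(\widetilde\beta)$. Taking the disjoint union over the summands of the original positive combination finishes the proof.

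The main obstacle is the bookkeeping around the specialization/basis step: I need to be sure that (a) the specialized complex at a full-rank point really is exact — this needs the homology being torsion plus boundedness plus freeness, and is cleanest phrased as ``the complex is exact after inverting $\det$, and a bounded complex of frees that is exact over a localization is locally split, hence stays exact under any base change through that localization''; and (b) the chosen bases genuinely carry the label $(i,\lambda)$ with total multiplicity equal to the \emph{rank} $\widetilde{\beta_{i,\lambda}} = \beta_{i,\lambda}\cdot d_\lambda(k)$, not the multiplicity $\beta_{i,\lambda}$ — this is why the whole section is stated for $\widetilde{BS}$ and $\widetilde{\mathbf{BT}}$, and one just has to expand each $\mathbb{S}_\lambda(V)$ into $d_\lambda(k)$ basis vectors all sharing the label $\lambda$. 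Everything else is formal once Proposition~\ref{prop:les-matching} is in hand.
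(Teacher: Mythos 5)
Your proof is correct and takes essentially the same approach as the paper's: reduce to the coefficient graph of a long exact sequence obtained from a realizing minimal free complex with labels decreasing by minimality/Pieri, and apply Proposition~\ref{prop:les-matching}. The only cosmetic difference is that you specialize at a fixed invertible matrix $T_0$ to obtain finite-dimensional vector spaces, whereas the paper tensors with $\mathrm{Frac}(R)$; the remark immediately following the paper's proof explicitly notes that your specialization variant is equally valid.
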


\begin{proof}
Let $\widetilde{\beta}$ be the Betti table of a minimal free equivariant complex $(F^\bullet,\delta)$ of $R$-modules, with $R = R_{k,k}$ the coordinate ring of the $k\times k$ matrices, and $F^\bullet \otimes R[\tfrac{1}{\det}]$ exact.

Choose, for each $F^i$, a $\mathbb{C}$-basis of each copy of $\mathbb{S}_\lambda(V)$ occuring in $F^i$. Label each basis element $x$ by the corresponding partition $\lambda$. It follows from minimality that $\delta(x_\lambda)$ is an $R$-linear combination of basis elements labeled by partitions $\lambda' \subsetneq \lambda$.

Since the homology modules are torsion, $F_\bullet \otimes \mathrm{Frac}(R)$ is an exact sequence of $\mr{Frac}(R)$-vector spaces, with bases given by the $x_\lambda$'s chosen above. By the previous proposition, its coefficient graph has a perfect matching. This graph has the same vertices as the Betti graph $G(\widetilde{\beta})$, and its edges are a subset of $G(\widetilde{\beta})$'s edges.
\end{proof}

\begin{remark}
Rather than tensoring with $\mathrm{Frac}(R)$, we may instead specialize to any convenient invertible $k \times k$ matrix $T \in \Hom(\mb{C}^k,\mb{C}^k)$, such as the identity matrix. This approach is useful for computations, since the resulting exact sequence consists of finite-dimensional $\mb{C}$-vector spaces.
\end{remark}

\subsubsection{Double complexes and the proof of Theorem \ref{thm:pairing}}

Finally, we generalize to the setting of double complexes and spectral sequences. Let $(E^{\bullet,\bullet}, d_v, d_h)$ be a double complex of vector spaces, with differentials pointing up and to the left:
\[\xymatrix{ 
&& E^{p-1,q+1} & E^{p,q+1} \ar[l]_-{d_h} \\
\ar@{-->}[u]^{q \text{ axis}} \ar@{-->}[r]_{p \text{ axis}} && E^{p-1,q} \ar[u]^-{d_v} & \ar[l]^-{d_h} \ar[u]_-{d_v} E^{p,q}
}\]
We assume the squares anticommute, so the total differential is
\[d_{tot} = d_h + d_v, \qquad \text{ and } \qquad  d_hd_v + d_vd_h = 0.\]
We will always assume the total complex $\mathrm{Tot}(E^{\bullet,\bullet})$ has a finite number of columns. Note that we do \emph{not} assume a basis has been specified for each $E^{\bullet,\bullet}$. We recall the complexes $E^{\bullet,\bullet}$ of interest:
\begin{itemize}
\item[(1)] Each term $E^{p,q}$ has a direct sum decomposition 
\[E^{p,q} = \bigoplus_{\lambda \in P} E^{p,q,\lambda},\]
with labels $\lambda$ from a poset $P$.
\item[(2)] The vertical differential $d_v$ is \emph{graded} with respect to this labeling, and
\item[(3)] The horizontal differential $d_h$ is \emph{downwards-filtered}.
\end{itemize}
The conditions (2) and (3) mean that
\[d_v(E^{p,q,\lambda}) \subseteq E^{p,q+1,\lambda}, \text{ and } d_h(E^{p,q,\lambda}) \subseteq \bigoplus_{\lambda' \prec \lambda} E^{p-1,q,\lambda'},\]
so the vertical differential preserves the label and the horizontal differential strictly decreases it.

We are interested in the homology of the vertical map $d_v$. Since $d_v$ is $P$-graded, so is its homology $E_1^{p,q,\lambda} = H(d_v)^{p,q,\lambda}$. We recall that the \newword{$\mathbf{E_1}$ graph} $G(E^{\bullet,\bullet})$ is defined as follows:
\begin{itemize}
\item The vertex set contains $\dim(E_1^{p,q,\lambda})$ vertices labeled $(p,q,\lambda)$, for each $p,q$ and each $\lambda \in P$;
\item The edge set includes all possible edges $(p,q,\lambda) \to (p',q',\lambda')$ whenever $\lambda' \prec \lambda$ and $(p',q') = (p-r,q-r+1)$ for some $r > 0$.
\end{itemize}
The edges of $G$ are shaped like higher-order differentials of the associated spectral sequence, i.e. they point downwards-and-leftwards, and they respect the strictly-decreasing-$P$-labels condition. We wish to show:
\begin{theorem}
If $\mr{Tot}(E^{\bullet,\bullet})$ is exact, the $E_1$ graph of $E^{\bullet,\bullet}$ has a perfect matching.
\end{theorem}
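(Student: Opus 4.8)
The plan is to reduce the statement to the long-exact-sequence case already handled by Proposition~\ref{prop:les-matching}. The total complex $\mr{Tot}(E^{\bullet,\bullet})$ is exact by hypothesis, so if we simply pick any $\mb{C}$-basis for each $E^{p,q}$ (refining the $P$-decomposition, so each basis vector carries a well-defined label $\lambda \in P$), Proposition~\ref{prop:les-matching} produces a perfect matching on the coefficient graph of $d_{tot}$. The trouble is that this matching lives on the wrong graph: its edges are shaped like $d_{tot} = d_h + d_v$, not like the higher differentials of the spectral sequence, and its vertices are all of $E^{\bullet,\bullet}$, not the subquotient $E_1$. So the real work is to massage a $d_{tot}$-matching into an $E_1$-graph matching, and the mechanism for this is exactly the $\mc{F}/\mc{G}$ splitting trick from the proof of Proposition~\ref{prop:les-matching}, applied first to $d_v$ and then iterated.

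Here are the steps in order. First, fix the label-refined bases $\mc{V}^{p,q}$ of each $E^{p,q}$. Applying the splitting argument to the vertical differential $d_v$ column by column: within each $\mc{V}^{p,q}$, choose $\mc{F}^{p,q} \subseteq \mc{V}^{p,q}$ descending to a basis of $\mr{im}(d_v) \subseteq E^{p,q}$ and, among the complement, choose $\mc{H}^{p,q}$ descending to a basis of a complement of $\mr{im}(d_v)$ inside $\ker(d_v)$ — these $\mc{H}$'s are identified with bases of $E_1^{p,q}$ — leaving $\mc{G}^{p,q}$, which $d_v$ maps isomorphically onto $\mr{span}(\mc{F}^{p,q+1})$ (modulo lower-label junk; crucially $d_v$ is label-\emph{preserving}, so this bookkeeping stays inside a single label). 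Since $d_v$ is $P$-graded, all of these choices can be made label-homogeneously. Proposition~\ref{prop:baby-matching}/\ref{prop:isom-matching} then matches $\mc{G}^{p,q}$ with $\mc{F}^{p,q+1}$ along vertical edges, for every $(p,q)$, using only genuine edges of the coefficient graph of $d_v$ (hence label-preserving edges). This matches every $\mc{F}$-vertex and every $\mc{G}$-vertex; what remains unmatched is precisely $\bigsqcup_{p,q}\mc{H}^{p,q}$, the vertex set of the $E_1$ graph.

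Second, I need to match the $\mc{H}$-vertices among themselves using $E_1$-graph edges. For this, restrict the total differential: on $\mr{Tot}(E^{\bullet,\bullet})$ perform Gaussian elimination along the matched pairs $(\mc{G}^{p,q} \leftrightarrow \mc{F}^{p,q+1})$ — i.e.\ quotient the exact complex $\mr{Tot}(E^{\bullet,\bullet})$ by the acyclic subcomplex spanned by these matched pairs (this is legitimate because $d_v$ restricted there is an isomorphism, so the subcomplex is exact and the quotient remains exact). The quotient complex has underlying vector space $\bigoplus \mr{span}(\mc{H}^{p,q})$, i.e.\ $\mr{Tot}(E_1)$ as a graded vector space, it is still exact, and — this is the point where conditions (2) and (3) are used — its induced differential, expressed in the $\mc{H}$-basis, has matrix entries going from $(p,q,\lambda)$ only to spots $(p',q')$ with $p' < p$ (because after eliminating the $d_v$-isomorphism part, every surviving map lowers the column: $d_h$ strictly lowers $p$, and the correction terms from elimination are compositions involving at least one $d_h$) and to labels $\lambda' \preceq \lambda$, with equality of label forcing a strict drop that in fact lands on the $E_1$-graph edge shape $(p-r, q-r+1)$. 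Applying Proposition~\ref{prop:les-matching} to this exact complex yields a perfect matching on its coefficient graph, and by the previous sentence every edge of that graph is an edge of the $E_1$ graph (possibly after checking that the bidegree shift is exactly $(-r,-r+1)$: the quotient complex is $\mb{Z}$-graded by $p+q$ with differential of degree $-1$, and a matrix entry from $(p,q)$ to $(p',q')$ with $p' < p$ and $p'+q' = p+q-1$ forces $(p',q') = (p-r, q-r+1)$ for $r = p - p' \geq 1$, so the constraint is automatic). Concatenating this matching with the vertical matching from step one gives a perfect matching on the full $E_1$ graph, as desired.

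The main obstacle I anticipate is the bookkeeping in step two: making precise that after eliminating the $d_v$-matched pairs, the induced differential on $\mr{span}(\mc{H}^{\bullet,\bullet})$ genuinely respects both the downward-filtration in $P$ (so that it only produces $E_1$-graph edges, not arbitrary edges) and the correct bidegree shape. The label claim follows from the fact that $d_v$ is $P$-graded while $d_h$ is strictly $P$-decreasing, so any alternating composition $d_h \, d_v^{-1} d_h \, d_v^{-1}\cdots d_h$ arising from Gaussian elimination is strictly $P$-decreasing except possibly for a single ``pure $d_v$'' leftover which does not occur since $d_v$ has been fully eliminated off of $\mc{H}$; still, writing this out carefully (perhaps via the standard homological-perturbation / Gauss-elimination lemma for complexes, or by an explicit filtration argument on $\mr{Tot}$) is where the care is needed. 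Everything else is a direct appeal to Propositions~\ref{prop:baby-matching}, \ref{prop:isom-matching} and \ref{prop:les-matching}.
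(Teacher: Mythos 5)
Your overall strategy is the same as the paper's: split $d_v$ into $B = \mr{im}(d_v)$, a complement $H$ of $B$ in $\ker(d_v)$ (identified with $E_1$), and a complement $B^*$ of $\ker(d_v)$; match $B^*$ against $B$ by $d_v$-isomorphisms; eliminate these pairs from $\mr{Tot}(E)$ to get an exact complex with underlying space $\mr{Tot}(H)$; check the induced differential respects the label filtration; conclude by Proposition~\ref{prop:les-matching}. However, the single sentence ``quotient the exact complex $\mr{Tot}(E^{\bullet,\bullet})$ by the acyclic subcomplex spanned by these matched pairs'' does not work as stated: $\mr{span}(\mc{F}\cup\mc{G})$ is \emph{not} a $d_{tot}$-subcomplex, because $d_h$ sends $\mc{G}^{p,q}$ into $E^{p-1,q}$, which generally has nonzero $\mc{H}$-components, so the total differential does not preserve that span and there is no induced map on the quotient. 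You do flag this and name the right fix (Gaussian elimination / the perturbation lemma), but naming it is exactly the point where the technical content of the theorem lives. The paper's proof supplies that content concretely: it replaces each $b_\lambda \in B$ by $\tilde{b}_\lambda := d_{tot}(b^*_\lambda) = b_\lambda + d_h(b^*_\lambda)$, which is a unitriangular change of basis on each diagonal. With this twisted basis, $\widetilde{B} + B^*$ genuinely \emph{is} closed under $d_{tot}$ (since $d_{tot}(b^*) = \tilde b$ and $d_{tot}(\tilde b) = d_{tot}^2(b^*) = 0$), so one gets an honest short exact sequence $0 \to \mr{Tot}(\widetilde{B}+B^*) \to \mr{Tot}(E) \to \mr{Tot}(H) \to 0$ with acyclic kernel, and hence $\mr{Tot}(H)$ is exact. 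The label-filtration check for the differential on $\mr{Tot}(H)$ is then done by tracking how $b_\lambda$ re-expands in the new basis. Your sketch of why the resulting differential is strictly $P$-decreasing and has the right bidegree is morally right, but note a sign slip: the total complex is graded by $p - q$ (not $p + q$), since $d_h$ lowers $p$ and $d_v$ raises $q$; with $p' - q' = p - q - 1$ and $p' = p - r$ one correctly recovers $q' = q - r + 1$, which your final formula does match even though the intermediate grading claim is off. In short: same idea, but the paper's explicit $\tilde b$-basis is the precise replacement for your ``quotient by the subcomplex,'' which as written is not a quotient of complexes.
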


\begin{remark}
Consider summing the $E_1$ page along diagonals. Call the resulting complex $\mathrm{Tot}(E_1)$. If it were exact, the matching would exist by Proposition \ref{prop:les-matching}, and in fact would only use the edges corresponding to $r=1$. Since $\mathrm{Tot}(E_1)$ is not exact in general, the proof works by modifying its maps to make it exact.

Explicitly, we will exhibit a quasi-isomorphism from $\mathrm{Tot}(E^{\bullet,\bullet})$ to a complex with the same terms as $\mathrm{Tot}(E_1)$, but different maps -- whose nonzero coefficients are only in the spots permitted by the $E_1$ graph. Since $\mathrm{Tot}(E^{\bullet,\bullet})$ is exact, so is the new complex, so we will be done by Proposition \ref{prop:les-matching}.
\end{remark}

\begin{proof}
First, we split all the vertical maps: for each $p,q,\lambda$, we define subspaces $B,H,B^* \subseteq E$ (suppressing the indices) as follows. We put $B = \mathrm{im}(d_v)$; we choose $H$ to be linearly disjoint from $B$ and such that $B+H = \ker(d_v)$; then we choose $B^*$ linearly disjoint from $B+H$, such that $B+H+B^* = E$.

In particular, $d_v$ maps the subspace $B^*$ isomorphically to the subsequent subspace $B$, and the space $H$ descends isomorphically to $H(d_v)$, the $E_1$ term. The picture of a single column of the double complex looks like the following:
\[\hspace{2cm} \xymatrix@C-1.2pc{
& \vdots \\
B & H & B^* \ar[ull]_{\sim} & \text{ (note that } d_v(B) = d_v(H) = 0)\\
B & H & B^* \ar[ull]_{\sim} \\
   & H & B^* \ar[ull]_{\sim} \\
}\]
For the horizontal map, we have $d_h(B) \subset B$ and $d_h(H) \subset B+H$, and the poset labels $\lambda$ strictly decrease.

Our goal will be to choose bases carefully, so as to match the $H$ basis elements to one another, in successive diagonals, while decreasing the poset labels.

We first choose an arbitrary basis of each $H$ and $B^*$ space. We descend the basis of $B^*$ to a basis of the subsequent $B$ using $d_v$. Note that every basis element has a position $(p,q)$ and a label $\lambda$. We will write $x_\lambda$ if we wish to emphasize that a given basis vector $x$ has label $\lambda$.

We now change basis on the entire diagonal $E^i := \bigoplus_{p-q=i} E^{p,q}$. We leave the $H$ and $B^*$ bases untouched, but replace all the $B$ basis vectors, as follows. Let $b_\lambda \in B^{p,q,\lambda}$ and let $b^*_\lambda = d_v^{-1}(b_\lambda) \in (B^*)^{p,q-1,\lambda}$ be its `twin'. We define
\[\tilde{b}_\lambda := d_{tot}(b^*_\lambda) = b_\lambda + d_h(b^*_\lambda).\]
We replace $b_\lambda$ by $\tilde{b}_\lambda$, formally labeling the new basis vector by $(p,q,\lambda)$. We write $\widetilde{B}^{p,q,\lambda}$ for the span of these $\tilde{b}$'s, so in particular, $\widetilde{B}^{p,q,\lambda} := d_{tot}\big((B^*)^{p,q-1,\lambda}\big)$.

It is clear that $\widetilde{B},H,B^*$ collectively gives a new basis for the entire diagonal, unitriangular in the old basis. Notice also that the old basis element $b_\lambda \in B^{p,q,\lambda}$ becomes, in general, a linear combination of $\widetilde{B},H,B^*$ elements in all positions down-and-left of $p,q$, with leading term $\tilde{b}_\lambda$:
\[b_\lambda = \tilde{b}_\lambda + \sum_{i > 0} x^{p-i,q-i}, \text{ with } x^{p-i,q-i} \in \bigoplus_{\lambda' \subsetneq \lambda} E^{p-i,q-i,\lambda'}.\]
The lower terms have strictly smaller labels $\lambda' \subsetneq \lambda$. (In fact, slightly more is true: if a label $\lambda'$ occurs in the $i$-th term, the poset $P$ contains a chain of length $\geq i$ from $\lambda'$ to $\lambda$.)

We now inspect the coefficients of $(\mr{Tot}(E^{\bullet,\bullet}),d_{tot})$ in the new basis. We have
\begin{align*}
d_{tot}(b^*_\lambda) &= \tilde{b}_\lambda, \\
d_{tot}(\tilde{b}_\lambda) &= 0 \ \big(= d_{tot}^2(\tilde{b}^*_\lambda) \big),
\end{align*}
so the $B^*$ elements map one-by-one onto the $\widetilde{B}$ elements, with the same $\lambda$ labels; the latter elements then map to 0.

Next, for a basis element $h_\lambda \in H$, the coefficients change but remain `filtered'. If $d_{tot}(h_\lambda)$ included (in the old basis) some nonzero term $t \cdot b_\mu$, then in the new basis we have
\[d_{tot}(h_\lambda) = d_h(h_\lambda) = \cdots + t\cdot \big(\tilde{b}_\mu - d_h(b^*_\mu)\big) + \cdots.\]
Since $t$ is nonzero, we have $\mu \subsetneq \lambda$; and the additional terms coming from $d_h(b^*_\mu)$ all have labels $\mu' \subsetneq \mu$. Thus all labels occurring in $d_{tot}(h_\lambda)$ in the new basis are, again, strictly smaller than $\lambda$.  We note that $d_{tot}(h_\lambda)$ is a linear combination of $\widetilde{B},H,B^*$ elements down-and-left of $h_\lambda$ along the subsequent diagonal:
\begin{equation*}
\xymatrix@C-1.5pc@R-1.5pc{
&&& \widetilde{B},H & H \ar[l] \ar[dll] \ar[dddllll] \\
&& \widetilde{B},H,B^* \\
& \iddots \\
\widetilde{B},H,B^*
}
\end{equation*}
Finally, we observe that the spaces $\widetilde{B}+ B^*$ collectively span a subcomplex of $\mathrm{Tot}(E)$, so we have a short exact sequence of complexes
\[0 \to \mathrm{Tot}(\widetilde{B} + B^*) \to \mathrm{Tot}(E) \to \mathrm{Tot}(H) \to 0.\]
By construction, $\mathrm{Tot}(\widetilde{B} + B^*)$ is exact, so $\mathrm{Tot}(E) \to \mathrm{Tot}(H)$ is a quasi-isomorphism. Note that $\mathrm{Tot}(H)$ and $\mathrm{Tot}(E_1)$ have ``the same'' terms, but different maps, as desired. Since $\mathrm{Tot}(E)$ is exact, so is $\mathrm{Tot}(H)$. The desired matching therefore exists by Proposition \ref{prop:les-matching}.
\end{proof}

\begin{remark}
Our initial attempts to establish the Boij-S\"{o}derberg pairing (Theorems \ref{thm:pairing} and \ref{thm:e1-perfect-matching}) used the higher differentials on the $E_1, E_2, \ldots$ pages, rather than the $E_0$ page as above -- aiming to systematize ``chasing cohomology of the underlying sheaves''. 
The following example shows that such an approach fails on general double complexes.
\end{remark}

\begin{example}[A cautionary example]
Consider the following double complex. Each partition denotes a single basis vector with that label.
\[\xymatrix@C-1.2pc@R-1.2pc{& &\hspace{1.3cm}  {\tiny \yng(2)} & \ar[l] {\tiny \yng(3)} \\
& {\tiny \yng(2,1)} \oplus {\tiny \yng(1)} & \ar[l]_-{f} \ar@<-4ex>[u] {\tiny \yng(3,1)} \oplus {\tiny \yng(2)} \\
{\tiny \yng(1,1)} & \ar[l] {\tiny \yng(2,1)} \hspace{0.8cm} \ar@<3ex>[u]
}\]
The vertical map $d_v$ preserves labels and the horizontal map $d_h$ decreases labels. The unlabeled arrows correspond to coefficients of $1$, and the map $f$ is given by
\[f({\tiny \yng(2)}) = {\tiny\yng(1)}, \qquad f\Big({\tiny \yng(3,1)}\Big) = {\tiny\yng(2,1)} - {\tiny\yng(1)}.\]
Note that the rows are exact, so the total complex is exact as well, and the spectral sequence abuts to zero. The only nonzero higher differentials are on the $E_1$ and $E_3$ pages. These pages, and (for contrast) the complex $H$ constructed in Theorem \ref{thm:e1-perfect-matching}, are as follows.
\[\xymatrix@C-1.2pc@R-1.2pc{E_1: && 0 & {\tiny \yng(3)} \\
& {\tiny \yng(1)} & \ar[l] {\tiny \yng(3,1)}\\
{\tiny \yng(1,1)} & 0
} \qquad 
\xymatrix@C-1.2pc@R-1.2pc{E_3 : & & 0 & {\tiny \yng(3)} \ar@{-->}[ddlll] \\
& 0 & 0 \\
{\tiny \yng(1,1)} & 0
}\qquad \xymatrix@C-1.2pc@R-1.2pc{H : & & 0 & {\tiny \yng(3)} \ar[dll] \\
& {\tiny \yng(1)} & \ar[l] \ar[dll] {\tiny \yng(3,1)} \\
{\tiny \yng(1,1)} & 0
}\]
All the arrows are coefficients of $\pm 1$. In particular, \emph{no} combination of the $E_1$ and $E_3$ differentials gives a valid matching (the $E_3$ arrow violates the $P$-filtered condition). In contrast, $H$ finds the (unique) valid matching $\big\{\ {\tiny \yng(1)} \leftarrow {\tiny \yng(3)} \ , \ {\tiny \yng(1,1)} \leftarrow {\tiny \yng(3,1)}\ \big\}$.
\end{example}

\section{Preliminary results on \texorpdfstring{$2\times 3$}{2-by-3} matrices}
\label{sec:2x3mats}

While a complete picture of the general case still seems quite far away, we have some partial results and suggestive examples in the particular case of $2\times 3$ matrices, which suggest some features of the general picture.

In this setting, the modules of interest (as in Condition \ref{cond:modules-of-interest}) are Cohen-Macaulay of codimension 2. We will assume all modules are generated in positive degree. There are three equivariant Herzog-K\"{u}hl equations, corresponding to $\mu=\varnothing$, ${\scalebox{.5}{\yng(1)}}$, and $\scalebox{.5}{\yng(1,1)}$, which may be simplified to
\begin{align*}
\mu = \eset : \qquad  0 &= \sum_{i,\lambda} (-1)^i \beta_{i,\lambda} d_\lambda(2), \\
\mu = {\tiny \yng(1)} : \qquad 0 &= \sum_{i,\lambda} (-1)^i \beta_{i,\lambda} d_\lambda(2) \cdot (\lambda_1 + \lambda_2), \\
\mu = {\tiny \yng(1,1)} : \qquad 0 &= \sum_{i,\lambda} (-1)^i \beta_{i,\lambda} d_\lambda(2) \cdot \tfrac{1}{2}(\lambda_1+1)\lambda_2.
\end{align*}
All Betti tables in this section will be multiplicity Betti tables. We remark that
\begin{align*}
d_\lambda(2) &= 1+\lambda_1 - \lambda_2,\\
d_\lambda(3) &= \tfrac{1}{2} (1+\lambda_1 - \lambda_2)(1+\lambda_2-\lambda_3)(2+\lambda_1-\lambda_3).
\end{align*}

\subsection{Simple and pure resolutions}
The basic observation is the following. Since there are three Herzog-K\"{u}hl equations, if we allow exactly \emph{four} entries in our Betti table to be nonzero, in general we expect the equations to pick out one dimension's worth of valid tables. That is, the resulting table will be unique up to scalar multiple. Any realizable table of this form is automatically an extremal ray of $\mcone{2,3}$.

In fact, this observation underpins the characterization of pure tables for graded modules, where every choice of increasing degree sequence results in a unique table (up to scaling). The hope might be that, by analogy with both the graded case and the square-matrix case, these tables are all realizable, and form a complete set of extremal rays of $BS_{2,3}$. To that end, we will call these \newword{pure tables}.

For some choices of entries, the equations will be redundant, and allowing nonzero entries in only three positions will suffice. We will call the result a \newword{simple table}. In this case, each column of the table has exactly one entry, like the extremal tables in the graded and square-matrix cases. (Three entries are required or the resolution will be too short.)

\begin{remark}
In this section, most of the modules of interest are bi-equivariant for the actions of both $GL(V)$ and $GL(W)$ on $R = \Sym(\Hom(V,W)^*)$. So, for brevity, we write \[\lambda \otimes \mu := \mathbb{S}_\lambda(V) \otimes \mathbb{S}_\mu(W^*)\otimes R.\]
Note that this module has rank $d_\lambda(2)\cdot d_\mu(3)$, and therefore has multiplicity $d_\mu(3)$ if we remember only the action of $GL(V)$.
\end{remark}

\begin{example}
\label{exa:extremal-tables}
Here are all the pure tables using partitions of size between 0 and 3.

\begin{center}
\begin{tabular}{c|ccc}
&0&1&2\\\hline
$\varnothing$&3&&\\
${\scalebox{.5}{\yng(1)}}$&&3&\\
${\scalebox{.5}{\yng(2)}}$&&&1\\
${\scalebox{.5}{\yng(1,1)}}$&&&\\
${\scalebox{.5}{\yng(3)}}$&&&\\
${\scalebox{.5}{\yng(2,1)}}$&&&\\
\end{tabular}
\quad
\begin{tabular}{c|ccc}
&0&1&2\\\hline
$\varnothing$&8&&\\
${\scalebox{.5}{\yng(1)}}$&&6&\\
${\scalebox{.5}{\yng(2)}}$&&&\\
${\scalebox{.5}{\yng(1,1)}}$&&&\\
${\scalebox{.5}{\yng(3)}}$&&&1\\
${\scalebox{.5}{\yng(2,1)}}$&&&\\
\end{tabular}
\quad
\begin{tabular}{c|ccc}
&0&1&2\\\hline
$\varnothing$&1&&\\
${\scalebox{.5}{\yng(1)}}$&&&\\
${\scalebox{.5}{\yng(2)}}$&&&\\
${\scalebox{.5}{\yng(1,1)}}$&&3&\\
${\scalebox{.5}{\yng(3)}}$&&&\\
${\scalebox{.5}{\yng(2,1)}}$&&&1\\
\end{tabular}

\vspace{3ex}
\begin{tabular}{c|ccc}
&0&1&2\\\hline
$\varnothing$&2&&\\
${\scalebox{.5}{\yng(1)}}$&&&\\
${\scalebox{.5}{\yng(2)}}$&&2&\\
${\scalebox{.5}{\yng(1,1)}}$&&&\\
${\scalebox{.5}{\yng(3)}}$&&&1\\
${\scalebox{.5}{\yng(2,1)}}$&&&\\
\end{tabular}
\quad
\begin{tabular}{c|ccc}
&0&1&2\\\hline
$\varnothing$&&&\\
${\scalebox{.5}{\yng(1)}}$&6&&\\
${\scalebox{.5}{\yng(2)}}$&&8&\\
${\scalebox{.5}{\yng(1,1)}}$&&&\\
${\scalebox{.5}{\yng(3)}}$&&&3\\
${\scalebox{.5}{\yng(2,1)}}$&&&\\
\end{tabular}
\quad
\begin{tabular}{c|ccc}
&0&1&2\\\hline
$\varnothing$&&&\\
${\scalebox{.5}{\yng(1)}}$&3&&\\
${\scalebox{.5}{\yng(2)}}$&&1&\\
${\scalebox{.5}{\yng(1,1)}}$&&9&\\
${\scalebox{.5}{\yng(3)}}$&&&\\
${\scalebox{.5}{\yng(2,1)}}$&&&3\\
\end{tabular}
\end{center}
All of these tables except the last are simple, and each one is realizable. (The third is the Betti table of $M = R/P_2$, the quotient by the ideal of maximal minors, which is resolved by the Eagon-Northcott complex.)

In the graded and square-matrix cases, it is always possible \cite{EFW2011,FLS16} to realize any pure table by a resolution that is equivariant with respect to the actions of both $GL(V)$ and $GL(W)$. In fact, the same is true in these examples. The five simple tables above are realized, respectively, by resolutions of the following forms (the fourth actually gives 3 times the fourth table):
\[ \begin{array}{rclcrclcrcl} \vspace{0.1cm}
\eset & \otimes & {\tiny \yng(1)} & \leftarrow & {\tiny \yng(1)}& \otimes & {\tiny \yng(1,1)} & \leftarrow & {\tiny \yng(2)}& \otimes & {\tiny \yng(1,1,1)} \\ \vspace{0.1cm}
\eset & \otimes & {\tiny \yng(2,1)} & \leftarrow & {\tiny \yng(1)}& \otimes & {\tiny \yng(2,2)} & \leftarrow & {\tiny \yng(3)}& \otimes & {\tiny \yng(2,2,2)} \\ \vspace{0.1cm}
\eset & \otimes & \eset & \leftarrow & {\tiny \yng(1,1)}& \otimes & {\tiny \yng(1,1)} & \leftarrow & {\tiny \yng(2,1)}& \otimes & {\tiny \yng(1,1,1)} \\ \vspace{0.1cm}
\eset & \otimes & {\tiny \yng(2)} &\leftarrow & {\tiny \yng(2)}& \otimes & {\tiny \yng(2,2)} & \leftarrow & {\tiny \yng(3)}& \otimes & {\tiny \yng(2,2,1)} \\
{\tiny \yng(1)} & \otimes & {\tiny \yng(2)} &\leftarrow & {\tiny \yng(2)}& \otimes & {\tiny \yng(2,1)} & \leftarrow & {\tiny \yng(3)}& \otimes & {\tiny \yng(2,1,1)}
\end{array}\]
The last table is realised as follows:
\[\begin{array}{rclcccrcl}
{\tiny \yng(1)} & \otimes & {\tiny \yng(1,1)} & \leftarrow &
\begin{array}{rcc} \vspace{0.1cm}
  {\tiny \yng(2)} & \otimes & {\tiny \yng(1,1,1)}\ \  \\ \vspace{0.1cm} & \bigoplus \\
  {\tiny \yng(1,1)} & \otimes & \bigg( {\tiny \yng(1,1,1)} \oplus {\tiny \yng(2,1)} \bigg)
\end{array}
 & \leftarrow & {\tiny \yng(2,1)}& \otimes & {\tiny \yng(2,1,1)}
\end{array}\]
\end{example}
One thing that stands out in the above example is that the biequivariant resolutions for each of the \emph{simple} tables has, in each step, a single irreducible $(GL_2\times GL_3)$-representation. This does not appear to be an accident: we can construct a large class of such tables using a technique similar to \cite[Theorem 4.6]{FLS16}.
%
The constructed tables are as follows. At each step of the resolution, the partition shape changes by the addition of a ``border strip,'' that is, a shape like
\[{\tiny \young(\hfil\hfil\hfil)}\ \text{ or }\ {\tiny \young(::\hfil\hfil\hfil\cdots,\cdots\hfil\hfil)} \qquad  \text{ (i.e., a connected shape not containing } {\tiny \yng(2,2)}\ ).\]
Moreover, the second border strip is always adjacent and to the right of the first.
\noindent Examples of such triples $(\lambda,\mu,\nu)$ are:
\[
{\tiny \young(\hfil\hfil\hfil\hfil\hfil\hfil\hfil,\hfil11122) \qquad \young(\hfil\hfil1122,\hfil) \qquad \young(\hfil\hfil1122,111) \qquad \young(\hfil\hfil\hfil222,1122)}
\]
Here $\lambda$ consists of the empty squares, $\mu$ contains the additional squares marked $1$, and $\nu$ contains the squares marked $1,2$.

\begin{proposition}
\label{prop:simple2by3}
Let $\beta$ be a simple table with entries $\lambda \subsetneq \mu \subsetneq \nu$. If $\mu/\lambda$ and $\nu/\mu$ are adjacent, successive border strips, then $\beta$ is realizable by a biequivariant resolution with an irreducible $GL_2 \times GL_3$ representation at each step.
\end{proposition}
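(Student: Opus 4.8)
The plan is to reduce to exhibiting a single module, and then to build that module together with its resolution directly from the tautological matrix, in the spirit of \cite[Theorem~4.6]{FLS16}. First, the reduction. The simple table attached to $\lambda\subsetneq\mu\subsetneq\nu$ is determined up to scaling by its three shapes, and it spans an extremal ray of $\mcone{2,3}$; so it is enough to exhibit \emph{some} equivariant $R$-module $M$ ($R=R_{2,3}$), set-theoretically supported on the rank-deficient locus $X_1$, whose minimal equivariant free resolution has exactly one summand
\[
\mathbb{S}_\lambda(V)\otimes\mathbb{S}_a(W^*)\otimes R\ \longleftarrow\ \mathbb{S}_\mu(V)\otimes\mathbb{S}_b(W^*)\otimes R\ \longleftarrow\ \mathbb{S}_\nu(V)\otimes\mathbb{S}_c(W^*)\otimes R\ \longleftarrow\ 0
\]
in each homological degree, for suitable three-row partitions $a,b,c$. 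Such an $M$ has projective dimension $2$, hence --- being supported on the irreducible, codimension-$2$ variety $X_1$ --- is Cohen--Macaulay with $\sqrt{\mathrm{ann}(M)}=P_2$, so $M$ is a module of interest and Corollary~\ref{cor:FLSHK} forces its $GL(V)$-Betti table to satisfy the Herzog--K\"uhl equations. Having exactly one nonzero entry per column, that table is then the simple table we want. Twisting by powers of $\det(V)$ and $\det(W)$ shifts $(\lambda,\mu,\nu)$ and $(a,b,c)$ globally and preserves the border-strip hypotheses, so we may normalize these shapes freely.

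To build $M$ and its resolution at once, I would use an Akin--Buchsbaum--Weyman Schur complex $L_\theta$, applied to the tautological map $u\colon V\otimes R\to W\otimes R$ (or to its dual or a suitable twist), for a skew shape $\theta$ read off from the border-strip data. One can take $M$ to be $\mathbb{S}_\theta$ of (a twist of) the generically rank-one cokernel of $u$; since $\mathbb{S}_\theta$ applied to a rank-one sheaf vanishes whenever $\theta$ is not a single row, such an $M$ is automatically supported on $X_1$. The essential point is that $L_\theta(u)$ is \emph{acyclic}: this follows from the Akin--Buchsbaum--Weyman acyclicity criterion because the ideals of minors of the \emph{generic} matrix $u$ have maximal depth, so no separate exactness argument is required. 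Moreover $L_\theta(u)$ is visibly $GL(V)\times GL(W)$-equivariant, its terms assembled from pieces of the form $\mathbb{S}_{\theta/\sigma}(W\otimes R)\otimes\mathbb{S}_{\sigma}(V\otimes R)$; the case $\theta={\scalebox{.5}{\yng(1,1)}}$ recovers the Eagon--Northcott resolution of $R/P_2$, with shapes $\varnothing\subset{\scalebox{.5}{\yng(1,1)}}\subset{\scalebox{.5}{\yng(2,1)}}$.

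The remaining, and substantive, step is the combinatorial bookkeeping: one must show that precisely when $\mu/\lambda$ and $\nu/\mu$ are adjacent successive border strips, $\theta$ (equivalently, the sheaf defining $M$) can be chosen so that, after discarding trivial summands, $L_\theta(u)$ has a single $GL(V)\times GL(W)$-irreducible in each of its three terms, with $GL(V)$-shapes $\lambda,\mu,\nu$. Border strips are forced because, reading across consecutive terms, the $GL(V)$-shape changes by a skew shape of at most two rows and the $GL(W)$-shape by one of at most three rows, and the relevant Pieri / Littlewood--Richardson multiplicities collapse to $1$ exactly when these skew shapes are border strips; the adjacency hypothesis is precisely the condition that no \emph{second} irreducible survives in any term, i.e.\ that the complex stays minimal with one irreducible per step. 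Carrying out this decomposition, and checking the converse that every admissible triple is realized, is where essentially all the work lies --- the reduction, the acyclicity, and the equivariance are all formal --- and I expect this matching to be the main obstacle.

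An alternative route produces $M$ and its resolution geometrically, via the Kempf collapsing $Y=\{([L],\phi)\in\mathbb{P}(W)\times X:\operatorname{im}\phi\subseteq L\}\to X$ of $X_1$: one takes $M$ to be a suitable module of sections on $Y$ and computes the terms of the resulting equivariant resolution by Bott's theorem on $\mathbb{P}(W)=\mathbb{P}^2$. In this picture the border strips emerge from the ``add $\rho$, sort, count inversions'' recipe on $\mathbb{P}^2$, acyclicity is again automatic, and ``simple'' again corresponds to the Bott computation yielding one nonzero term per homological step --- so the same combinatorial matching is the crux either way.
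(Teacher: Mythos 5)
The paper itself writes ``We omit the proof'' for this proposition and only hints that the argument uses ``a technique similar to \cite[Theorem 4.6]{FLS16}''; there is therefore no in-paper proof to compare against. Your reduction is correct: producing one equivariant $M$ of projective dimension $2$ supported on $X_1$ whose minimal resolution has a single $GL(V)\times GL(W)$-irreducible in each homological degree, combined with Auslander--Buchsbaum (forcing $M$ Cohen--Macaulay) and Corollary~\ref{cor:FLSHK} (forcing the Herzog--K\"uhl equations), does pin the $GL(V)$-Betti table down to the desired simple table. Your two proposed constructions --- the Akin--Buchsbaum--Weyman Schur complex $L_\theta$ of the tautological map, and the Kempf collapsing $\mathbb{P}(W)\times X\to X$ with Bott's theorem --- are both standard machinery here, and the Kempf route is what \cite{FLS16} pursues in the square case, so the approach is consistent with the paper's stated intent.

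What you have, though, is a program rather than a proof, and the gap is not peripheral --- you flag it yourself. The entire content of the proposition is the combinatorial step: showing that for every triple $\lambda\subsetneq\mu\subsetneq\nu$ with $\mu/\lambda$ and $\nu/\mu$ adjacent successive border strips, a shape $\theta$ (or a line bundle on $\mathbb{P}(W)$) can be chosen so that the resulting complex has exactly three nonzero terms, each a single $GL_2\times GL_3$-irreducible, whose $GL(V)$-shapes are $\lambda,\mu,\nu$. Nothing in the proposal establishes this, and several of the surrounding claims silently depend on it: Schur complexes $L_\theta(u)$ for a generic $2\times3$ matrix $u$ need not have length $2$, nor need their terms be irreducible, so both ``projective dimension $2$'' and ``one irreducible per step'' have to be checked shape-by-shape rather than asserted; likewise, in the Kempf picture one must verify via Bott that exactly one cohomology group survives in each homological degree and that the surviving $GL(V)$-shapes are precisely $\lambda,\mu,\nu$, which is exactly where the border-strip and adjacency hypotheses must enter. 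Until that matching is exhibited, the argument is an outline, not a proof --- though, to be fair, the paper declines to supply it as well.
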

We omit the proof. We can also rule out almost all other possible triples $(\lambda, \mu, \nu)$:
\begin{proposition}
Let $\beta$ be a simple table with entries $\lambda \subsetneq \mu \subsetneq \nu$. Then $\nu_2 \leq \lambda_1+1$.
\end{proposition}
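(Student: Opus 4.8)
The plan is to repackage the three equivariant Herzog-K\"{u}hl equations of a simple table as a single collinearity statement about three points in the plane, and then extract $\nu_2 \le \lambda_1 + 1$ by an elementary case analysis. First I would record the numerical content of the hypothesis: a simple table with entries $\lambda \subsetneq \mu \subsetneq \nu$ has $\beta_{0,\lambda}, \beta_{1,\mu}, \beta_{2,\nu}$ as its only nonzero (hence positive) entries and satisfies all three equivariant Herzog-K\"{u}hl equations. Writing $d_\sigma = d_\sigma(2) = \sigma_1 - \sigma_2 + 1$ and $Q_\sigma = (\sigma_1+1)\sigma_2$, these three equations say exactly that the nonzero vector $(\beta_{0,\lambda}d_\lambda,\, -\beta_{1,\mu}d_\mu,\, \beta_{2,\nu}d_\nu)$ is orthogonal to each of $(1,1,1)$, $(|\lambda|,|\mu|,|\nu|)$, $(Q_\lambda,Q_\mu,Q_\nu)$; hence the $3\times 3$ matrix with these three rows is singular, which is precisely the assertion that the three points $\Psi(\sigma) := (|\sigma|,Q_\sigma) \in \mb{R}^2$, for $\sigma \in \{\lambda,\mu,\nu\}$, are collinear. (Only the nonvanishing of the three Betti entries is used here, not their positivity.)

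Next I would pass to the coordinates $a_\sigma := \sigma_1 + 1$ and $b_\sigma := \sigma_2$, under which $\Psi(\sigma) = (a_\sigma + b_\sigma - 1,\ a_\sigma b_\sigma)$. Then collinearity of the $\Psi(\sigma)$ is equivalent to the points $(a_\sigma + b_\sigma,\ a_\sigma b_\sigma)$ all lying on a common line $Y = \alpha X + \gamma$, necessarily non-vertical since $\lambda \subsetneq \mu$ forces $|\lambda| < |\mu|$. Substituting $X = a_\sigma + b_\sigma$, $Y = a_\sigma b_\sigma$ and adding $\alpha^2$ to both sides, this is equivalent to the existence of a constant $C$ with
\[(a_\sigma - \alpha)(b_\sigma - \alpha) = C \qquad \text{for all } \sigma \in \{\lambda, \mu, \nu\}.\]
The containments $\lambda \subseteq \mu \subseteq \nu$ give $a_\lambda \le a_\mu \le a_\nu$ and $b_\lambda \le b_\mu \le b_\nu$, while $a_\sigma > b_\sigma$ always; strictness gives $(a_\lambda,b_\lambda) \ne (a_\mu,b_\mu)$ and $(a_\mu,b_\mu) \ne (a_\nu,b_\nu)$. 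The desired conclusion $\nu_2 \le \lambda_1 + 1$ is now just $b_\nu \le a_\lambda$.

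Then I would split on the sign of $C$. If $C \le 0$: since $a_\nu > b_\nu$ and $(a_\nu - \alpha)(b_\nu - \alpha) = C \le 0$, the two factors cannot both be positive nor both negative, so $b_\nu \le \alpha \le a_\nu$, in particular $b_\nu \le \alpha$; the same reasoning applied to $\lambda$ gives $b_\lambda \le \alpha \le a_\lambda$, so $\alpha \le a_\lambda$, and hence $b_\nu \le \alpha \le a_\lambda$. If $C > 0$: every $\sigma$ has $a_\sigma, b_\sigma$ both $> \alpha$ or both $< \alpha$, so by pigeonhole two of the pairwise-comparable partitions, say $\sigma \subsetneq \tau$, lie on the same side. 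In the ``$> \alpha$'' case we have $0 < a_\sigma - \alpha \le a_\tau - \alpha$ and $0 < b_\sigma - \alpha \le b_\tau - \alpha$ together with $(a_\sigma-\alpha)(b_\sigma-\alpha) = C = (a_\tau-\alpha)(b_\tau-\alpha)$; two pairs of positive reals that are coordinatewise $\le$ and have equal products must be equal, so $a_\sigma = a_\tau$ and $b_\sigma = b_\tau$, i.e. $\sigma = \tau$, contradicting strictness (the ``$< \alpha$'' case is identical after negating both factors). So $C > 0$ cannot occur, and the proof is complete.

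I expect the only delicate point to be the first step: checking carefully that the three simplified Herzog-K\"{u}hl equations are genuinely equivalent to singularity of that $3\times 3$ matrix, and fixing the intended reading of ``simple table'' (three nonzero entries satisfying all three equations). After that the argument is elementary plane geometry. It is worth recording that the bound is sharp: $\lambda = (1)$, $\mu = (1,1)$, $\nu = (2,2)$ is a simple table --- with $\beta_{0,\lambda} = 1$, $\beta_{1,\mu} = 3$, $\beta_{2,\nu} = 1$ --- for which $\nu_2 = \lambda_1 + 1$.
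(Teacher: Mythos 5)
The paper omits its own proof of this proposition, remarking only that it ``follows from considering positive integer solutions to the equivariant Herzog-K\"{u}hl equations,'' so there is no argument in the paper to compare against directly. Your proof is correct. The reduction of the three equations to singularity of the $3 \times 3$ matrix with rows $(1,1,1)$, $(|\lambda|,|\mu|,|\nu|)$, $\big((\lambda_1+1)\lambda_2,(\mu_1+1)\mu_2,(\nu_1+1)\nu_2\big)$, and hence to collinearity of the three points $(|\sigma|,(\sigma_1+1)\sigma_2)$, is right; the change of variables to $(a_\sigma - \alpha)(b_\sigma - \alpha) = C$ is a clean way to package this; and the sign-of-$C$ case analysis is sound. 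In particular, your $C>0$ case is correctly ruled out by the pigeonhole-plus-rigidity argument, which is legitimate because any two of $\lambda,\mu,\nu$ are comparable. One observation you make that is worth emphasizing: your argument never uses positivity or integrality of the Betti entries, only that the three designated entries are nonzero (so the kernel vector of the matrix is nonzero). This is mildly stronger than what the paper's phrasing ``positive integer solutions'' suggests: the bound $\nu_2 \le \lambda_1 + 1$ already holds for any real solution of the three Herzog-K\"{u}hl equations supported on those three spots. Your sharpness example checks out, as does the paper's third table in Example \ref{exa:extremal-tables}, $\varnothing \subsetneq (1,1) \subsetneq (2,1)$ (the Eagon-Northcott complex), which also has $\nu_2 = 1 = \lambda_1 + 1$.
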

That is, the shapes $\mu/\lambda$ and $\nu/\mu$ are \emph{contained in} the border strip formed by the squares along the outer edge of $\lambda$:
\[{\tiny \young(\hfil\hfil\hfil\hfil\hfil???,\hfil\hfil????) \ \raisebox{0.15cm}{$\cdots$} }\]
They need not be connected or adjacent, though computations have suggested that almost all are. We omit the proof, which follows from considering positive integer solutions to the equivariant Herzog-K\"{u}hl equations.

For tables that are pure but not simple, the story is currently much less complete, although we can realize many of them. Also, whether or not these tables are all realizable, we don't know if the cone has other extremal rays not of this form. Still, we have some data that at least suggests some helpful patterns. First, note that in every non-simple pure table, one column has two nonzero entries. We will say the table is \newword{diamond-shaped} when the middle column has two nonzero entries and \newword{Y-shaped} otherwise. In several cases, we can realize Y-shaped tables using extensions.

\begin{example}[Y-shaped tables via extensions]
\label{exa:extensions-1}
Consider the following Y-shaped table:

\begin{center}
\begin{tabular}{c|ccc}
&0&1&2\\\hline
${\scalebox{.5}{\yng(1)}}$&1&&\\
${\scalebox{.5}{\yng(2,1)}}$&&3&\\
${\scalebox{.5}{\yng(3,1)}}$&&&1\\
${\scalebox{.5}{\yng(2,2)}}$&&&1
\end{tabular}
\end{center}

Using the same notation as in the previous examples, we were able to realize this table by the following resolution:
\[ \begin{array}{rclcrclcrcl}
{\tiny \yng(1)} & \otimes & \eset & \leftarrow & {\tiny \yng(2,1)}& \otimes & {\tiny \yng(1,1)} & \leftarrow & \Big({\tiny \yng(2,2)} \oplus {\tiny \yng(3,1)} \Big)& \otimes & {\tiny \yng(1,1,1)}
\end{array}\]
It's possible to build this resolution out of two simple resolutions in the following way. Consider the following two simple resolutions, where we now explicitly name the modules being resolved:
\[ \begin{array}{ccrclcrclcrcl} \vspace{0.1cm}
M & \leftarrow & {\tiny \yng(1)} & \otimes & \eset & \xleftarrow{f} & {\tiny \yng(1,1)}& \otimes & {\tiny \yng(1)} & \xleftarrow{e} & {\tiny \yng(2,2)} & \otimes & {\tiny \yng(1,1,1)} \\
N & \xleftarrow{h} & {\tiny \yng(1,1)} & \otimes & {\tiny \yng(1)} & \xleftarrow{g} & {\tiny \yng(2,1)}& \otimes & {\tiny \yng(1,1)} & \leftarrow & {\tiny \yng(3,1)} & \otimes & {\tiny \yng(1,1,1)}\ .
\end{array}\]
Note that the second term in the resolution of $M$ is the same as the first term in the resolution of $N$, and that all the other terms appear in exactly the desired positions for the resolution we wish to construct.

We define $E := \coker(fg)$. By minimality, $E$ has the desired generators; it suffices to check that $E$ has the correct $\Tor_i$ for $i \geq 1$. The key fact is that there is a short exact sequence
\[0 \to N \to E \to M \to 0,\]
so we can extract information about the Betti table of $E$ from the long exact sequence in Tor. This follows essentially from computing that $h \circ e = 0$, that is, the unique generator of the module ${\tiny \yng(2,2)} \otimes {\tiny \yng(1,1,1)}$ (which has rank 1) maps into the image of $g$. Thinking of $\Ext^1(M,N)$ as obtained by applying $\Hom(-,N)$ to the resolution of $M$, the map $h$ therefore induces an extension class, which is $E$ above. 

Abusing notation, in the equation below, $\lambda \otimes \mu$ denotes the (finite-dimensional) representation $\mathbb{S}_\lambda(V) \otimes \mathbb{S}_\mu(W^*)$, not the corresponding free $R$-module. The long exact sequence in Tor is:
\begin{align*} \vspace{0.2cm}
0\  &\leftarrow {\tiny \yng(1)} \otimes \eset  \leftarrow  E \otimes R/\mf{m}  \leftarrow  {\tiny \yng(1,1)} \otimes {\tiny \yng(1)}  \xleftarrow{s}  {\tiny \yng(1,1)} \otimes {\tiny \yng(1)}  \leftarrow \Tor_1(E,R/\mf{m}) \leftarrow \\
&\leftarrow {\tiny \yng(2,1)} \otimes {\tiny \yng(1,1)} \xleftarrow{0} {\tiny \yng(2,2)} \otimes {\tiny \yng(1,1,1)} \leftarrow \Tor_2(E,R/\mf{m})\leftarrow {\tiny \yng(3,1)} \otimes {\tiny \yng(1,1,1)}\ \leftarrow \ 0.
\end{align*}
Since we have already determined $\Tor_0$, the map $s$ above is surjective, hence an isomorphism (since its source and target are the same irreducible representation). On the second line, the indicated map is zero because its source and target are distinct irreducible representations.

In this particular case, one can check by hand that $h \circ e = 0$ (in fact, we just computed the minimal free resolution of $\coker(fg)$ using Macaulay2 and checked that it had the desired form). It is not immediately clear why something similar should happen in general. Still, this cancellation seems important, and indeed shows up for many other Y-shaped pure tables.
\end{example}

We end by discussing two examples that demonstrate phenomena that do not appear in the square-matrix or graded cases.

\begin{example}[A slightly different extension]
\label{exa:extensions-2}
Consider the following table:

\begin{center}
\begin{tabular}{c|ccc}
&0&1&2\\\hline
${\scalebox{.5}{\yng(1)}}$&9&&\\
${\scalebox{.5}{\yng(2,1)}}$&&24&\\
${\scalebox{.5}{\yng(3,1)}}$&&&8\\
${\scalebox{.5}{\yng(3,2)}}$&&&3
\end{tabular}
\end{center}

A slight modification to the procedure of Example \ref{exa:extensions-1} realizes this table. We start with the following two resolutions:
\[ \begin{array}{ccrclcrclcrcl} \vspace{0.1cm}
M & \leftarrow & {\tiny \yng(1)} & \otimes & {\tiny \yng(1,1)} & \leftarrow & {\tiny \yng(1,1)}& \otimes & {\tiny \yng(2,1)} & \leftarrow & {\tiny \yng(3,2)} & \otimes & {\tiny \yng(2,2,2)} \\
N & \leftarrow & {\tiny \yng(1,1)} & \otimes & {\tiny \yng(1)} & \leftarrow & {\tiny \yng(2,1)}& \otimes & {\tiny \yng(1,1)} & \leftarrow & {\tiny \yng(3,1)} & \otimes & {\tiny \yng(1,1,1)}\ .
\end{array}\]
Note that $N$ is the same module as in Example \ref{exa:extensions-1}. This time, the $V$ sides line up (they both have a copy of ${\tiny \yng(1,1)}$ for $V$), but the $W$ sides do not. So, we tensor the resolution of $M$ with $\eset \otimes {\tiny \yng(1)}$ and the resolution of $N$ with $\eset \otimes {\tiny \yng(2,1)}$\ . Then they become composable, with the potential for cancellation. Continuing as in Example \ref{exa:extensions-1} (and assuming the requisite terms cancel, which they do) we get a resolution of the form
\[ \begin{array}{ccrclcrclcrcl} \vspace{0.1cm}
E & \leftarrow & {\tiny \yng(1)} & \otimes & \bigg( {\tiny \yng(1,1)} \otimes {\tiny \yng(1)} \bigg) & \leftarrow & {\tiny \yng(2,1)}& \otimes & \bigg( {\tiny \yng(1,1)} \otimes {\tiny \yng(2,1)} \bigg) & \leftarrow &
\begin{array}{rcl} \vspace{0.1cm}
{\tiny \yng(3,1)} & \otimes & {\tiny \yng(3,2,1)} \\ \vspace{0.1cm} & \bigoplus \\
{\tiny \yng(3,2)} & \otimes & {\tiny \yng(3,2,2)}\end{array}
\end{array},\]
which gives exactly the table above. In this case, not all of the $GL(W)$-representations are irreducible (though the ones on the final term are).
\end{example}

\begin{example}[``Stably-realizable'' Betti tables]
Consider the following table:

\begin{center}
\begin{tabular}{c|ccc}
&0&1&2\\\hline
$\varnothing$&1&&\\
${\scalebox{.5}{\yng(2)}}$&3&&\\
${\scalebox{.5}{\yng(2,1)}}$&&8&\\
${\scalebox{.5}{\yng(3,2)}}$&&&3
\end{tabular}
\end{center}
Despite satisfying the Herzog-K\"{u}hl equations, and resembling our previous unobjectionable examples, this table is not realizable. This follows from the numerical pairing: when paired with the trivial vector bundle, the result is the following table, which clearly has no perfect matching of the appropriate type:

\begin{center}
\begin{tabular}{c|cc}
&-1&0\\\hline
$\varnothing$&&1\\
${\scalebox{.5}{\yng(2)}}$&9&\\
${\scalebox{.5}{\yng(2,1)}}$&&8\\
\end{tabular}
\end{center}
\end{example}
Consider, however, tensoring the table with $\mathbb{S}_1V$. (That is, write the Betti table that would result from tensoring such a resolution with $\mathbb{S}_1V$.) The result is as follows:
\begin{center}
\begin{tabular}{c|ccc}
&0&1&2\\\hline
${\scalebox{.5}{\yng(1)}}$&1&&\\
${\scalebox{.5}{\yng(3)}}$&3&&\\
${\scalebox{.5}{\yng(2,1)}}$&3&&\\
${\scalebox{.5}{\yng(3,1)}}$&&8&\\
${\scalebox{.5}{\yng(2,2)}}$&&8&\\
${\scalebox{.5}{\yng(4,2)}}$&&&3\\
${\scalebox{.5}{\yng(3,3)}}$&&&3\\
\end{tabular}
\end{center} \vspace{0.1cm}
This table in fact \emph{is} realizable. It is a linear combination of the following four realizable tables:
\vspace{2ex}
\begin{center}
\begin{tabular}{c|ccc}
$A$&0&1&2\\\hline
${\scalebox{.5}{\yng(3)}}$&1&&\\
${\scalebox{.5}{\yng(3,1)}}$&&2&\\
${\scalebox{.5}{\yng(3,3)}}$&&&2\\
&&&
\end{tabular}
\quad
\begin{tabular}{c|ccc}
$B$&0&1&2\\\hline
${\scalebox{.5}{\yng(2,1)}}$&3&&\\
${\scalebox{.5}{\yng(3,1)}}$&&1&\\
${\scalebox{.5}{\yng(2,2)}}$&&6&\\
${\scalebox{.5}{\yng(4,2)}}$&&&1\\
\end{tabular}
\quad
\begin{tabular}{c|ccc}
$C$&0&1&2\\\hline
${\scalebox{.5}{\yng(3)}}$&1&&\\
${\scalebox{.5}{\yng(2,1)}}$&1&&\\
${\scalebox{.5}{\yng(3,1)}}$&&3&\\
${\scalebox{.5}{\yng(4,2)}}$&&&1\\
\end{tabular}
\quad
\begin{tabular}{c|ccc}
$D$&0&1&2\\\hline
${\scalebox{.5}{\yng(1)}}$&1&&\\
${\scalebox{.5}{\yng(2,2)}}$&&5&\\
${\scalebox{.5}{\yng(4,2)}}$&&&1\\
&&&
\end{tabular}
\end{center} \vspace{2ex}
Specifically, the large table above is \[\tfrac32 A+\tfrac12 B+\tfrac32 C+D.\]

This example suggests that it might be easier to study ``stably-realizable'' Betti tables, that is, Betti tables that become realizable after tensoring with some $GL(V)$-representation. This idea is still being explored.

\appendix
\section{The proof of Proposition \ref{prop:isom-matching}}

\begin{proposition}
Let $V,W$ be vector spaces of arbitrary dimension, with specified bases $\mc{V},\mc{W}$. Let $T : V \to W$ be an isomorphism. Then the coefficient graph of $T$ has a perfect matching.
\end{proposition}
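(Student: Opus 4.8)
The plan is to reduce to the finite-dimensional case, Proposition~\ref{prop:baby-matching} (where a nonvanishing monomial term of $\det T$ exhibits the matching), by the following two-step strategy applied to the coefficient graph $G$ of $T$: first produce a matching in $G$ saturating $\mathcal{V}$, then produce one saturating $\mathcal{W}$, and finally splice them into a perfect matching.

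For the matching saturating $\mathcal{V}$: every vertex $v \in \mathcal{V}$ has finite degree in $G$, since $Tv$ is a finite linear combination of basis vectors. Moreover Hall's condition holds on the $\mathcal{V}$-side, since for finite $S \subseteq \mathcal{V}$ the space $T(\mathrm{span}\,S)$ has dimension $|S|$ and lies inside $\mathrm{span}\,\Gamma(S)$, whence $|\Gamma(S)| \geq |S|$. The form of Hall's marriage theorem valid when every vertex of one side has finite degree (equivalently, Rado's selection principle applied to the finite neighborhoods $\Gamma(v)$, together with the finite marriage theorem on finite subfamilies) then yields a matching $M_{\mathcal{V}}$ of $G$ saturating $\mathcal{V}$. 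The same dimension count applied to the surjective composite $V \to W \twoheadrightarrow \mathbb{C}^{S}$ for finite $S \subseteq \mathcal{W}$ shows that Hall's condition also holds on the $\mathcal{W}$-side of $G$.

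Producing a matching saturating $\mathcal{W}$ is the delicate step, and I expect it to be the main obstacle: a vertex $w \in \mathcal{W}$ may have infinite degree in $G$ (its row of $T$ may have infinite support), so Hall's theorem in the finite-degree form does not apply, and indeed Hall's condition on both sides alone does not force a perfect matching of an arbitrary bipartite graph. This is where one must use that $T$ is an isomorphism and not merely injective: the columns of $T^{-1}$ are again finite. The idea is to replace each infinite neighborhood $\Gamma(w)$ by a suitable \emph{finite} subset $\widehat{\Gamma}(w) \subseteq \Gamma(w)$ built from supports of $T^{-1}$, chosen so that the family $\{\widehat{\Gamma}(w)\}_{w \in \mathcal{W}}$ still satisfies Hall's condition; one then checks the latter by restricting $T$ and $T^{-1}$ to the finite coordinate subspaces relevant to a given finite $S \subseteq \mathcal{W}$ and invoking Proposition~\ref{prop:baby-matching} for those finite matrices, after which Rado's principle yields a matching $M_{\mathcal{W}}$ of $G$ saturating $\mathcal{W}$. (The most naive choice $\widehat{\Gamma}(w) = \Gamma(w)\cap\mathrm{supp}(T^{-1}w)$ is nonempty, since expanding $w = T(T^{-1}w)$ and reading the coefficient of $w$ gives $\sum_{v}[Tv]_{w}\,[T^{-1}w]_{v} = 1$, but it need not satisfy Hall's condition, so a slightly enlarged finite neighborhood is required; pinning down the right enlargement is the crux of the argument.) To finish, $M_{\mathcal{V}} \cup M_{\mathcal{W}}$ is a disjoint union of finite and infinite paths together with even cycles; using that $M_{\mathcal{V}}$ covers $\mathcal{V}$ and $M_{\mathcal{W}}$ covers $\mathcal{W}$, one checks that every finite path has odd length, so choosing the appropriate alternating set of edges on each component produces a perfect matching of $G$.
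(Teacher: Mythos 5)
Your construction of $M_{\mathcal{V}}$ is sound, and the Mendelsohn--Dulmage-type splicing of two saturating matchings at the end is standard (in fact one can check there are \emph{no} finite alternating paths at all, only even cycles and one- or two-sided infinite paths, and an appropriate alternating family on each component works). But the construction of $M_{\mathcal{W}}$ is, as you yourself flag, incomplete, and this is a genuine gap rather than a technicality: Hall's condition does not yield a saturating matching when one side has vertices of infinite degree, and your proposed fix---trim each $\Gamma(w)$ to a finite $\widehat{\Gamma}(w)$ built from $\mathrm{supp}(T^{-1}w)$ while preserving Hall's condition---is unestablished and, so far as I can tell, essentially as hard as the theorem itself. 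You have correctly isolated the useful identity $\sum_v [Tv]_w\,[T^{-1}w]_v = 1$, which forces $\Gamma(w)\cap\mathrm{supp}(T^{-1}w)\neq\varnothing$, but you do not know what finite-degree subgraph to hand to Rado's lemma, and no candidate is produced.

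The paper avoids this obstacle by a different route, and the missing structural idea in your proposal is a reduction to the countable case. Starting from any basis vector, one repeatedly adjoins the finitely many basis elements appearing in images under $T$ and in preimages under $T^{-1}$; the resulting countable coordinate subspaces on each side are swapped by $T$, and $T$ splits as a direct sum of such countable-dimensional isomorphisms. Once everything is countable, a direct back-and-forth greedy construction finishes the job, using precisely the identity you found: given an unmatched $v$, choose $w$ with both $[Tv]_w\neq 0$ and $[T^{-1}w]_v\neq 0$, match $v\leftrightarrow w$, and check that restricting $T$ to $\mathrm{span}(\mathcal{V}\setminus\{v\})\to\mathrm{span}(\mathcal{W}\setminus\{w\})$ (projecting away $w$) is again an isomorphism with the same coefficients. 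Alternating which side is processed and always taking the least unmatched index guarantees every basis vector is eventually matched, with no appeal to Hall's theorem or Rado's selection principle at all. So your approach is genuinely different from the paper's, but at present it does not close; if you want to pursue it, the thing to prove is that the ``doubly nonzero'' subgraph (edges where both $[Tv]_w$ and $[T^{-1}w]_v$ are nonzero, which has finite degree on both sides) satisfies Hall's condition on the $\mathcal{W}$-side---but the paper does not claim this and I do not see a short argument for it.
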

\begin{proof}
We first reduce to $\mc{V},\mc{W}$ countable. This step is due to David Lampert \cite{L2016}. Let $b \in \mc{V}$ be arbitrary. Then $T(b)$ involves only finitely-many basis elements, say $B_1 \subset \mc{W}$. For each $s \in B_1$, $T^{-1}(s)$ only involves finitely-many basis elements; let $A_2 \subset \mc{V}$ contain these new elements, together with $b$. Repeat this construction, building two sequences of coordinate subspaces
\[(b) \subset A_2 \subset A_3 \subset \cdots \subset V, \qquad  B_1 \subset B_2 \subset B_3 \subset \cdots \subset W\]
such that, for each $i$, $B_i \subseteq T(A_i) \subseteq B_{i+1}.$ Let $A_\infty, B_\infty$ be the union; it follows that $T$ restricts to an isomorphism of countable-dimensional spaces $T : A_\infty \to B_\infty$, and $T$ as a whole splits as a direct sum of such isomorphisms.

We now build the matching inductively. Fix a basis vector $v \in \mc{V}$ and write $T(v) = \sum a_i w_i$, and assume every $w_i$ in the sum has $a_i \ne 0$. Equivalently,
\[v = \sum a_i T^{-1}(w_i),\]
so some $T^{-1}(w_i)$ contributes a nonzero $v$-coefficient. Fix one such $w$; we match $v \leftrightarrow w$. Note that this choice is compatible with both $T$ and $T^{-1}$. Let
\[C = \mathrm{span}( \mc{V} \setminus \{v\}), \qquad D = \mathrm{span}( \mc{W} \setminus \{w\}).\]
It is now easy to show that $C \hookrightarrow V \to W \twoheadrightarrow D$ is an isomorphism
%
(with the same coefficients as $T$, but with $v$ and $w$ removed). We now build the matching: we alternate between $V$ and $W$, always choosing the first unmatched basis vector on each side to ensure that every basis vector gets matched (note that the construction is symmetric with respect to $T$ and $T^{-1}$).
%
\end{proof}

\bibliographystyle{alpha}
\bibliography{boijsoder-bib}{}

\end{document}